\pdfoutput=1
\documentclass[11pt]{article}

\usepackage[a4paper, total={6in, 8in}]{geometry}
\usepackage[english]{babel}

\usepackage{graphicx}
\usepackage{wrapfig}
\usepackage{caption}
\usepackage{subcaption}
\usepackage{xcolor}

\usepackage{cite}
\usepackage{comment}

\usepackage{amsmath,amsthm,amssymb,mathtools}
\usepackage{esvect}
\usepackage{hyperref}

\usepackage{float}
\usepackage{algorithm}
\usepackage{algpseudocode}

\usepackage{enumitem}

\usepackage{titlesec}
\titleformat*{\paragraph}{\itshape}
\titleformat*{\subsubsection}{\itshape}

\renewcommand{\P}{\mathrm{P}}
\newcommand{\Z}{\mathbb{Z}}
\newcommand{\A}{\mathcal{A}}

\DeclareMathOperator{\br}{br}
\DeclareMathOperator{\cl}{cl}

\DeclarePairedDelimiter{\abs}{\lvert}{\rvert}

\newtheorem{theorem}{Theorem}
\newtheorem{corollary}{Corollary}
\newtheorem{lemma}{Lemma}

\newtheorem{example}{Example}

\theoremstyle{remark}
\newtheorem{remark}{Remark}

\usepackage[bottom]{footmisc}

\title{Critical probabilities for positively associated, finite-range dependent percolation models}
\author{Laurin Köhler-Schindler\footnote{ETH Zürich, Zürich, Switzerland (klaurin@ehz.ch, asulser@ethz.ch).}  \and Aurelio L. Sulser\footnotemark[1]}
\date{\today}

\mathtoolsset{showonlyrefs}

\begin{document}

\maketitle

\begin{abstract}
  	On a locally finite, infinite tree $T$, let $p_c(T)$ denote the critical probability for Bernoulli percolation. 
  	We prove that every positively associated, finite-range dependent percolation model on $T$ with marginals $p > p_c(T)$ must percolate. Among finite-range dependent models on trees, positive association is thus a favourable property for percolation to occur.

    On general graphs of bounded degree, Liggett, Schonmann and Stacey \cite{Liggett1997} proved that finite-range dependent percolation models with sufficiently large marginals stochastically dominate product measures.
    Under the additional assumption of positive association, we prove that stochastic domination actually holds for arbitrary marginals. Our result thereby generalises Proposition 3.4 in \cite{Liggett1997} which was restricted to the special case $G = \Z$.
   
  	Studying the class of 1-independent percolation models has proven useful in bounding critical probabilities of various percolation models via renormalization. In many cases, the renormalized model is not only 1-independent but also positively associated. This motivates us to introduce the smallest parameter $p_a^+(G)$ such that every positively associated, 1-independent bond percolation model on a graph $G$ with marginals $p > p_a^+(G)$ percolates. We obtain quantitative upper and lower bounds on $p_a^+(\Z^2)$ and on $p_a^+(\Z^n)$ as $n\to \infty$, and also study the case of oriented bond percolation. In proving these results, we revisit several techniques originally developed for Bernoulli percolation, which become applicable thanks to a simple but seemingly new way of combining positive association with finite-range dependence. 
\end{abstract}

\section*{Introduction}

We study finite-range dependent, positively associated bond percolation models. A bond percolation configuration $\omega \in \{0,1\}^E$   on a graph $G=(V,E)$ assigns to each edge $e \in E$ a random status, \emph{open} if $\omega(e)=1$  or \emph{closed} if $\omega(e)=0$, and it is often identified with the random subgraph $(V,\{e \in E: e\ \text{open} \})$. A probability measure $\P$ on the space of configurations $\omega \in \{0,1\}^E$ satisfies 
\begin{itemize}
	\item \emph{positive association} if $\P[\mathcal A_1\cap \mathcal A_2]\ge \P[\mathcal A_1]\cdot \P[\mathcal A_2]$ for all increasing events $\mathcal A_1,\mathcal A_2$,
	\item \emph{k-independence} if the edge statuses in $F_1$ are independent of the edge statuses in $F_2$ whenever $F_1,F_2 \subseteq E$ are disjoint and at distance at least $k$ in $G$.	
\end{itemize}
$\P$ is \emph{finite-range dependent} if it is $k$-independent for some $k \in \mathbb N$. Moreover, $\P$ has \emph{marginals $p \in [0,1]$} if $\P[\omega(e)=1]=p$ for all $e \in E$. The classical and well-studied Bernoulli bond percolation model, where edges are independently declared  open (resp.\ closed) with probability $p$  (resp.\ $1-p$), is positively associated, $0$-independent and has marginals $p$.
We say that a bond percolation model $\P$ on an infinite, connected, locally finite graph \emph{percolates} if the random subgraph $\omega$  contains an infinite connected component with positive probability.   We refer to Section \ref{sec:background-and-notation} for more background and precise definitions.

In this paper, we are interested in the geometry of the random subgraph when we allow for finite-range dependencies and we ask how this large class of models compares with Bernoulli percolation, particularly regarding the existence of an infinite cluster.  
Our first result studies percolation on trees and establishes that among positively associated models, adding finite-range dependencies can only favour the existence of an infinite cluster. 

\begin{theorem}\label{thm:trees}
	Let $T$ be an infinite, locally finite tree. Then every positively associated, finite-range dependent bond percolation model $\P$ with marginals $p>p_c(T) = \frac{1}{\br(T)}$ percolates.
\end{theorem}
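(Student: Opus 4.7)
The plan is to carry out a second-moment (Paley--Zygmund) argument in the spirit of R.\ Lyons' proof that $p_c(T) = 1/\br(T)$ for Bernoulli bond percolation on trees, modifying both the first-moment lower bound and the second-moment upper bound so that they use positive association and $k$-independence in place of full edge independence.

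Fix $k \in \mathbb N$ such that $\P$ is $k$-independent, and pick $\lambda \in (1/p, \br(T))$. By the flow characterisation of the branching number there is a unit flow $\theta$ from the root $o$ to infinity with finite $\lambda$-energy, hence with finite $(1/p)$-energy $\sum_{v \neq o} \theta(v)^2 (1/p)^{|v|} < \infty$, where $|v|$ is the depth of $v$ and $\theta(v)$ the flow into $v$. Setting $X_v = \mathbf{1}\{o \leftrightarrow v\}$, consider the test function
\[
Z_n \;=\; \sum_{|v|=n} \frac{\theta(v)\, X_v}{\P[X_v]}, \qquad \mathbb E[Z_n] \;=\; \sum_{|v|=n} \theta(v) \;=\; 1.
\]

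For the second moment, fix $v, w$ at depth $n$ with meet $u$ at depth $m = |v \wedge w|$ and write $\{o \leftrightarrow v, w\} = B_u \cap C_v^* \cap C_w^*$, where $B_u$ is the event that the root-to-$u$ path is open and $C_v^*, C_w^*$ are the analogous events for the $u$-to-$v$ and $u$-to-$w$ paths. Split $C_v^* = C_v^{\mathrm{top}} \cap C_v^{\mathrm{far}}$, where $C_v^{\mathrm{top}}$ concerns the (at most $k$) edges of depth at most $m+k$ on the $u$-to-$v$ path and $C_v^{\mathrm{far}}$ concerns the remaining deeper edges; define $C_w^{\mathrm{top}}, C_w^{\mathrm{far}}$ analogously. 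A short tree-geometric check shows that the three edge sets supporting $B_u$, $C_v^{\mathrm{far}}$ and $C_w^{\mathrm{far}}$ are pairwise at graph distance at least $k$ in $T$, so by $k$-independence they are mutually independent. Combined with $\{o \leftrightarrow v, w\} \subseteq B_u \cap C_v^{\mathrm{far}} \cap C_w^{\mathrm{far}}$, this yields
\[
\P[o \leftrightarrow v, w] \;\le\; \P[B_u]\, \P[C_v^{\mathrm{far}}]\, \P[C_w^{\mathrm{far}}].
\]
Combining with the FKG inequalities $\P[C_v^*] \ge \P[C_v^{\mathrm{top}}]\, \P[C_v^{\mathrm{far}}] \ge p^k\, \P[C_v^{\mathrm{far}}]$, $\P[o \leftrightarrow v] \ge \P[B_u]\,\P[C_v^*]$, and $\P[B_u] \ge p^m$, together with the symmetric bounds for $w$, the three ``long'' probabilities $\P[B_u], \P[C_v^*], \P[C_w^*]$ cancel and leave the clean estimate
\[
\P[o \leftrightarrow v, w] \;\le\; p^{-2k-m}\, \P[o \leftrightarrow v]\, \P[o \leftrightarrow w],
\]
with boundary cases ($m=0$, or $n-m \le k$ so that a ``far'' part is empty) handled by the same reasoning with a direct FKG bound. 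Substituting into $\mathbb E[Z_n^2]$, expanding the double sum over the meet $u$ and using flow conservation $\sum_{v \ge u,\, |v| = n} \theta(v) = \theta(u)$ reduces the estimate to a constant multiple of the $(1/p)$-energy of $\theta$:
\[
\mathbb E[Z_n^2] \;\le\; p^{-2k} \sum_{u \in T} (1/p)^{|u|}\, \theta(u)^2 \;\le\; C \;<\; \infty
\]
uniformly in $n$. Paley--Zygmund then gives $\P[Z_n > 0] \ge \mathbb E[Z_n]^2/\mathbb E[Z_n^2] \ge 1/C$, and since $\{Z_n > 0\}$ is contained in the event that $o$ is connected to some depth-$n$ vertex, local finiteness of $T$ (K\"onig's lemma) promotes this uniform lower bound to $\P[o \leftrightarrow \infty] \ge 1/C > 0$, so $\P$ percolates.

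The main obstacle is the second-moment estimate. Finite-range dependence and positive association are individually too weak for a direct bound: FKG gives only the lower bound $p^\ell$ on $\P[\text{path of length }\ell\text{ open}]$, while $k$-independence alone yields at best an upper bound like $p^{\ell/(k+1)}$, and the two cannot be reconciled model-freely. The crucial move is therefore not to bound the numerator and denominator of the ratio $\P[o \leftrightarrow v, w]/(\P[o \leftrightarrow v]\, \P[o \leftrightarrow w])$ separately, but to keep the three long factors $\P[B_u], \P[C_v^*], \P[C_w^*]$ intact and cancel them between numerator and denominator via FKG, paying only the constant price $p^{-2k}$ for truncating the top $k$ edges of the two descending paths. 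What remains is precisely the Bernoulli ratio $p^{-m}$, which is exactly what Lyons' original second-moment argument needs.
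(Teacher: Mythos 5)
Your proof is correct, and while it shares the overall skeleton of the paper's argument (a second-moment/Paley--Zygmund computation in the style of Lyons, with a unit flow of finite $(1/p)$-energy), the way you obtain the key two-point estimate is genuinely different. The paper first performs an inhomogeneous \emph{independent depletion}: it thins $\omega$ to a configuration $\eta$ using edge-retention probabilities $p_e = p\,q_x/q_y$ chosen so that the one-point function becomes exactly Bernoulli, $\mathbb P[o \leftrightarrow_\eta x] = p^{d(o,x)}$; the second moment is then bounded by applying $k$-independence to the pair $\{o\leftrightarrow_\eta x\}$, $\{z'\leftrightarrow_\eta y\}$ with $z'$ at distance $k$ below the meet, and FKG enters only through $\mathbb P[x'\leftrightarrow_\eta x]\le \mathbb P[o\leftrightarrow_\eta x]/\mathbb P[o\leftrightarrow_\eta x']$. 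You instead work directly with the original measure, normalise each indicator by its own connection probability so that the first moment is $1$ for free, and then cancel the three long-path probabilities $\P[B_u],\P[C_v^*],\P[C_w^*]$ between numerator and denominator via FKG, paying $p^{-2k}$ for truncating the top $k$ edges of each descending branch so that the three surviving supports are pairwise (hence mutually) $k$-separated. The two routes land on the same estimate $\P[o\leftrightarrow v,w]\le p^{-O(k)}p^{-|v\wedge w|}\P[o\leftrightarrow v]\P[o\leftrightarrow w]$ and the same energy bound; the paper's depletion buys exactly Bernoulli one-point functions (which makes both moments mechanical and introduces a reusable auxiliary object), whereas your version is more self-contained, avoids the auxiliary randomness entirely, and only loses an immaterial extra factor of $p^{-k}$ in the constant. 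Your handling of the degenerate cases ($m=0$, short branches, $v=w$) and the passage from $\inf_n \P[Z_n>0]>0$ to percolation via local finiteness are all fine.
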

The exact identification of the critical probability $p_c(T)$ of Bernoulli percolation in terms of the branching number $\text{br}(T)$ is due to Lyons \cite{Lyons1990}. At this point, we only note that a $d$-ary tree has branching number $d$ and postpone the general definition to Section \ref{sec:background-and-notation}.
We point out that neither the assumption of positive association nor the assumption of finite-range dependence can be dropped. Indeed, without positive association, Balister and Bollobás \cite{Balister2012} show the existence of 1-independent percolation models with marginals arbitrarily close to $3/4$ that do not percolate, and without finite-range dependence, positively associated percolation models with marginals arbitrarily close to $1$ that do not percolate can be obtained by assigning the same state within each level set of some fixed root $o\in T$. However, the assumption of homogeneous marginals $p$ can easily be replaced by a uniform lower bound on $\mathrm{P}[\omega(e)=1]$.

On general graphs of bounded degree, Liggett, Schonmann and Stacey \cite{Liggett1997} proved that every finite-range dependent bond percolation model $\P$ with large enough marginals $p$ stochastically dominates Bernoulli bond percolation with sufficiently small but strictly positive marginals $q$, and that one can take $q$ arbitrarily close to $1$ by taking $p$ sufficiently large. They also discuss that no such result can hold for small marginals $p$, and among others, provide an example of a 1-independent, translation invariant bond percolation model on $\Z$ with marginals $1/2$ which does not stochastically dominate Bernoulli bond percolation for any marginals $q>0$. However, this model is not positively associated. 

For the special case $G = \Z$, it is also proved in \cite{Liggett1997} (see Proposition 3.4 and the discussion on p.~75 of their paper) that even for small marginals $p$, every positively associated, 1-independent bond percolation model $\P$ stochastically dominates Bernoulli bond percolation with sufficiently small but strictly positive marginals. Our second result generalises this result to general graphs of bounded degree. We thereby establish that under the additional assumption of positive association, the main result of Liggett, Schonmann and Stacey extends to arbitrary marginals.

\begin{theorem} \label{thm:stochastic-domination}
	Let $G$ be a graph with maximum degree $\Delta < \infty$ and let $k\ge 1$. There exist increasing homeomorphisms $\rho,\sigma : [0,1] \to [0,1]$ depending only on $\Delta$ and $k$ such that for every positively associated, $k$-independent bond percolation model $\P$ with marginals $p$, we have
	\begin{equation*}
		\pi_{\rho(p)} \ll \P \ll \pi_{\sigma(p)},
	\end{equation*}
	where $\pi_q$ denotes the Bernoulli bond percolation model on $\{0,1\}^E$ with marginals $q$.
\end{theorem}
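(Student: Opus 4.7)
The two inequalities are dual under the involution $\omega \mapsto 1-\omega$, which preserves positive association and $k$-independence while swapping marginals $p \leftrightarrow 1-p$ and interchanging the two stochastic dominations. It therefore suffices to prove the lower bound $\pi_{\rho(p)} \ll \P$; the upper bound then follows with $\sigma(p) := 1 - \rho(1-p)$.

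The core of the argument is a closed-cylinder bound obtained by combining positive association with finite-range dependence. Let $H$ denote the auxiliary graph on $E$ in which $e \sim_H e'$ whenever $\mathrm{dist}_G(e,e') < k$. Since $G$ has maximum degree $\Delta$, $H$ has maximum degree bounded by some $D = D(\Delta,k)$. A greedy argument then shows that every finite $F \subseteq E$ contains an $H$-independent subset $I \subseteq F$ with $|I| \geq |F|/(D+1)$. Edges of $I$ are pairwise at $G$-distance $\geq k$, so by $k$-independence, $(\omega_e)_{e \in I}$ is i.i.d.\ $\mathrm{Ber}(p)$. Since $\{\omega_F = 0\} \subseteq \{\omega_I = 0\}$, one obtains
\[
\P[\omega_F = 0] \;\leq\; (1-p)^{|I|} \;\leq\; (1-p)^{|F|/(D+1)} \;=\; (1-\rho(p))^{|F|},
\]
where $\rho(p) := 1 - (1-p)^{1/(D+1)}$ is an increasing homeomorphism of $[0,1]$. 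This ``simple but seemingly new'' combination is the heart of the argument: $k$-independence turns a sparse subset of any $F$ into an exact Bernoulli product, and monotonicity of the closed-cylinder event in $F$ transfers this into a bound on the full $F$.

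The remaining task is to upgrade this cylinder bound to the full stochastic domination $\pi_{\rho(p)} \ll \P$. I plan to adapt the argument of Liggett--Schonmann--Stacey for the special case $G = \Z$ (their Proposition~3.4) to general bounded-degree graphs, using the structural information above. For any decreasing event $B$ on finite $F$, one writes $B = \bigcup_{y \in \max(B)} \{\omega \leq y\}$ as a union of principal down-sets; each such principal down-set is a closed cylinder controlled by the bound above. The positive association of $\P$, via the FKG inequality applied to the increasing complements $\{\omega \not\leq y\}$, is then used to transfer the cylinder comparison to the union, giving the inequality $\P[B] \leq \pi_{\rho(p)}[B]$ via Strassen's theorem. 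The upper bound $\P \ll \pi_{\sigma(p)}$ with $\sigma(p) = 1 - p^{1/(D+1)}$ follows immediately by duality.

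The main obstacle is this upgrade. Cylinder bounds alone do not yield stochastic domination for generic measures, and the direct Holley-style sequential coupling breaks down here because positive association permits conditional probabilities such as $\P[\omega_e = 1 \mid \text{neighbours of }e\text{ are all closed}]$ to vanish (as in the elementary example $\omega_1 = \omega_2 \sim \mathrm{Ber}(p)$). The point is that the bound derived in the cylinder step is \emph{exactly} the one that holds for the comparison Bernoulli measure $\pi_{\rho(p)}$, so we never need to control any single conditional probability: the two measures can be compared globally at the level of the cylinder decomposition of $B$, with positive association handling the overlap terms uniformly in the geometry of $F$.
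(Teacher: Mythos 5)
Your reduction by duality and your closed-cylinder bound $\P[\omega_F=0]\le (1-\rho(p))^{\abs{F}}$ (via a greedy $H$-independent subset of $F$) are both correct, but the step that actually proves the theorem --- upgrading these bounds to $\pi_{\rho(p)} \ll \P$ --- is missing, and the sketch you give for it does not close. Writing a decreasing event $B$ as a union of principal down-sets $\{\omega \le y\}$ and bounding each term is not enough: inclusion–exclusion over the union carries alternating signs, so termwise upper bounds cannot be combined. The FKG route you propose fails for a directional reason: positive association gives $\P[B^c]=\P\bigl[\bigcap_y \{\omega \not\le y\}\bigr] \ge \prod_y \P[\omega\not\le y] \ge \prod_y \pi_{\rho(p)}[\omega\not\le y]$, but to conclude $\P[B^c]\ge \pi_{\rho(p)}[B^c]$ you would need $\prod_y \pi_{\rho(p)}[\omega\not\le y] \ge \pi_{\rho(p)}\bigl[\bigcap_y\{\omega\not\le y\}\bigr]$, i.e.\ the \emph{reverse} of FKG for the product measure, which is false. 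More fundamentally, controlling all closed cylinders is far weaker than stochastic domination, and you have given no argument that positive association bridges that gap.

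The paper's proof does not pass through cylinder bounds at all. It follows Liggett--Schonmann--Stacey's Proposition 3.4: one introduces an independent Bernoulli($p'$) thinning $Z_n = X_n \cdot Y_n$ and verifies the sequential criterion $\mathbb P[Z_{n_{j+1}}=1 \mid Z_{n_1}=\varepsilon_1,\ldots,Z_{n_j}=\varepsilon_j] \ge \rho(p)$, which by their Lemma 1.1 yields domination. The zeros at edges within distance $k$ of $n_{j+1}$ are ``blamed'' on the $Y$-variables (at a cost $(1-p')^{2(\Delta-1)^k}$), after which the remaining conditioning event is $\cl_k(\{n_{j+1}\})$-increasing; a key lemma characterising the conjunction of positive association and $k$-independence (Lemma \ref{lem:positive-association-k-independence}) then shows this conditioning cannot increase $\mathbb P[X_{n_{j+1}}=0]$. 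You correctly identified the obstacle (conditional probabilities of $X$ itself can vanish), but the thinning device and the lemma controlling conditional probabilities given $\cl_k(F)$-increasing events are the essential ingredients your proposal lacks. To repair your argument you would need to prove a genuine domination criterion from your hypotheses, which your current sketch does not provide.
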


\begin{remark}
	Theorem \ref{thm:trees} and Theorem \ref{thm:stochastic-domination} are not specific to bond percolation, and it will become clear in the respective proofs that they also apply to site percolation.
\end{remark}

In the following, we restrict our attention to 1-independent percolation models which have proven useful in bounding critical probabilities of various percolation models via renormalization (see \cite{Balister2005} and Section 6.2 in \cite{bollobas2006percolation}). Balister and Bollobás \cite{Balister2012} systematically introduced the class of 1-independent bond percolation models on a connected, infinite, locally finite graph $G$ with marginals at least $p$ (i.e.\ $\P[\omega(e)=1]\ge p$ for all $e \in E$), denoted here by $\mathcal{P}(G,p)$, and defined 
\begin{equation}
	p^{+}(G) := \sup\{p : \exists\; \P \in \mathcal{P}(G,p) \text{ that does not percolate}\}. 
\end{equation}
In other words, for  $p > p^{+}(G)$, any model in $\mathcal{P}(G,p)$ percolates. In contrast, for $p < p^{+}(G)$, there exists a model in $\mathcal{P}(G,p)$ that does not percolate. Known bounds on the quantity $p^{+}(G)$ for different graphs $G$ will be discussed shortly. 

Inspired by this systematic approach and motivated by the fact that standard renormalization schemes often preserves positive association (see, e.g., Section 6.2 in \cite{bollobas2006percolation}), we introduce the subclass $\mathcal{P}_{a}(G,p)$ of positively associated models in $\mathcal{P}(G,p)$, and define
\begin{equation}
	p_a^{+}(G) := \sup\{p : \exists\; \P \in \mathcal{P}_a(G,p) \text{ that does not percolate}\}. 
\end{equation}
Note that restricting $\mathcal{P}_{a}(G,p)$  to models with all marginals equal to $p$ would not affect the quantity $p_a^{+}(G)$ since a non-percolating model $\P' \in \mathcal{P}_a(G,p)$ with marginals equal to $p$ can be obtained from a non-percolating model $\P \in \mathcal{P}_a(G,p)$  by independently deleting each edge $e$ with probability $1 - p/\P[\omega(e)=1]$.
Defining $p_c(G)$ as usual to be the supremum over all marginals $p$ for which Bernoulli bond percolation does not percolate, we clearly have 
\begin{equation}
    p^+(G) \ge  p_a^+(G) \ge p_c(G).
\end{equation} 

For infinite, locally finite trees, it was proven in \cite[Thm.\ 1.2--1.3]{Balister2012} that 
\begin{equation*}
	p^+(T) = \begin{cases}
		1- \frac{\br(T)-1}{\br(T)^2} & \text{if } \br(T)< 2, \\
		\frac{3}{4} & \text{if } \br(T) \ge 2. 
	\end{cases}
\end{equation*}
As an immediate corollary of Theorem \ref{thm:trees}, we obtain the equality 
	\begin{equation*}
		p_a^+(T) = p_c(T) =  \frac{1}{\textrm{br}(T)}.
	\end{equation*}
Particularly for $\br(T)$ large, the behaviour of $p^+(T)$ and $p^+_a(T)$ is very different: While $p_a^+(T) \to 0$ as $\textrm{br}(T) \to \infty$, $p^+(T)$ is uniformly bounded from below by $ 3/4$.

One might ask if positive association also makes a difference for the lowest marginal at which percolating models exist. Following \cite{Balister2012}, define the largest parameter $p^-(G)$ such that no 1-independent bond percolation model on $G$ with marginals $p < p^-(G)$ percolates. Similarly, we define $p_a^-(G)$ by additionally restricting to positively associated models. Clearly, we have $p^-(G) \le  p_a^-(G) \le p_c(G)$, and a simple argument  (see \cite[Prop.~4.1]{Balister2012}) implies
\begin{equation}
    p^-(G) \le p_a^-(G) \le p_c^{\mathrm{site}}(G)^2.
\end{equation}
In the case of trees, the matching lower bound on  $p^-(T)$ is determined in \cite[Thm.\ 4.2]{Balister2012}, and so we deduce
\begin{equation*}
	p^-(T) = p_a^-(T) = \frac{1}{\br(T)^2},
\end{equation*}
showing that the additional assumption of positive association is not consequential in this case. 

Unlike for trees, where all six parameters, $p_c(T)$, $p^{\mathrm{site}}_c(T)$, $p^+(T)$, $p_a^+(T)$, $p^-(T)$, and $p_a^-(T)$, can be exactly identified, this is typically not possible for other graphs, even for $p_c(G)$, and instead, one tries to obtain good bounds. In the following, we focus on the study of $p^{+}(G)$ and  $p_a^+(G)$ on the lattice $\Z^n$ and on the directed lattice\footnote{The directed lattice $\vv{\Z}^n$ has vertex set $\Z^n$ and edge set $\vv{E}(\Z^n)$ consisting of all directed edges $(x,y)$ satisfying $\{x,y\} \in E(\Z^n)$ and $x_i \le y_i$ for every $1\le i\le n$. We note that the definition of $p^{+}(G)$ and $p_a^{+}(G)$ applies analogously to directed graphs.} $\vv{\Z}^n$. 

For oriented Bernoulli bond percolation on $\vv{\Z}^n$, it was proven in \cite{Cox1983} that 
\begin{equation}
    p_c(\vv{\Z}^n) = \frac{1}{n} \cdot (1+o(1)) \quad \text{as} \ n \to \infty. 
\end{equation} 
The next theorem exactly determines $p_a^+(\vv{\Z}^n)$ up to the first-order term. 
\begin{theorem} \label{thm:oriented-d-large}
	\begin{equation*}
		p_a^+(\vv{\Z}^n) = \frac{\sqrt{2}}{n} \cdot (1+o(1)) \quad \text{as}\ n \to \infty.
	\end{equation*}
\end{theorem}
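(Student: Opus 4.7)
My approach is to prove the exact asymptotic in Theorem \ref{thm:oriented-d-large} via matching upper and lower bounds on $p_a^+(\vv{\Z}^n)$.

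\emph{Lower bound.} To show $p_a^+(\vv{\Z}^n) \ge \frac{\sqrt{2}}{n}(1 - o(1))$, the plan is to construct an explicit positively associated, $1$-independent bond percolation model on $\vv{\Z}^n$ with marginals approaching $\sqrt{2}/n$ that does not percolate. A natural family of candidates assigns i.i.d.\ Uniform$[0,1]$ labels $U_v$ to the vertices and declares each edge $(v, v+e_i)$ open based on a monotone function of $(U_v, U_{v+e_i})$. Monotonicity ensures positive association via FKG, and $1$-independence is automatic since edges at graph distance at least $1$ involve disjoint sets of vertex labels. Non-percolation is then verified via a multi-type branching-process argument, where the critical condition reduces to the spectral radius of an explicit integral operator on $L^2([0,1])$ being at most $1$. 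Optimising the monotone edge condition to maximise the marginal under this spectral constraint should produce the $\sqrt{2}/n$ scaling, with the $\sqrt{2}$ arising from a Hilbert--Schmidt or Cauchy--Schwarz estimate comparing the operator norm to the $L^2$-norm of the kernel.

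\emph{Upper bound.} To show $p_a^+(\vv{\Z}^n) \le \frac{\sqrt{2}}{n}(1 + o(1))$, the plan is to adapt the second-moment argument of Cox and Durrett \cite{Cox1983} that established $p_c(\vv{\Z}^n) = (1+o(1))/n$ for Bernoulli oriented percolation. Given a positively associated, $1$-independent model with marginals $p > \frac{\sqrt{2}}{n}(1+\varepsilon)$, let $X_k$ denote the number of open oriented paths of length $k$ from the origin. By FKG applied path by path, $E[X_k] \ge (np)^k \to \infty$. For the second moment, $1$-independence makes pairs of vertex-disjoint paths exactly independent, yielding the dominant contribution, while the novel combination of positive association with finite-range dependence advertised in the abstract controls the contribution of pairs sharing vertices. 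The Paley--Zygmund inequality then yields $\P(X_k > 0) \ge E[X_k]^2/E[X_k^2] \ge c > 0$ uniformly in $k$, so the origin lies in an infinite cluster with positive probability.

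The main obstacle is pinning down the precise constant $\sqrt{2}$, which reflects a genuine lattice effect beyond the tree bound $1/n$ that Theorem \ref{thm:trees} would give for an $n$-ary tree. On the construction side, finding the monotone edge condition whose spectral threshold exactly matches $\sqrt{2}/n$ is a delicate variational problem---naive choices such as the AND-type condition ``$U_v \le \alpha$ and $U_{v+e_i} \le \alpha$'' only yield scaling of order $1/n^2$. On the analysis side, the combination of positive association with $1$-independence must provide a sharp upper bound on the joint-open probability of pairs of overlapping paths; this is presumably where the ``simple but seemingly new'' combination trick mentioned in the abstract plays the central role.
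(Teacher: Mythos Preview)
Your lower-bound framework is close in spirit but misses the actual construction, which is much simpler than a variational problem over monotone kernels. The paper's model is $2$-periodic in the level structure $L_i=\{v:\sum_j v_j=i\}$: attach to each vertex $u$ in an \emph{even} level two independent $\mathrm{Ber}(p)$ variables $Z_u^{\mathrm{in}},Z_u^{\mathrm{out}}$, declare every incoming edge of $u$ open iff $Z_u^{\mathrm{in}}=1$ and every outgoing edge open iff $Z_u^{\mathrm{out}}=1$. This is positively associated (increasing functions of independent Bernoullis) and $1$-independent, with marginals exactly $p$. From a vertex $u\in L_{2i}$ to a grandchild $w\in L_{2i+2}$, \emph{every} $2$-path $u\to v\to w$ is open simultaneously (they all require $Z_u^{\mathrm{out}}=Z_w^{\mathrm{in}}=1$), so the expected number of reachable grandchildren is $|N_+^2(u)|\cdot p^2=\tfrac{n(n+1)}{2}p^2$. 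A one-line branching comparison then gives non-percolation for $p<\sqrt{2/n(n+1)}$. The constant $\sqrt 2$ comes directly from $|N_+^2(u)|\approx n^2/2$; no spectral analysis is needed, and your uniform-label ansatz with a single edge rule cannot reproduce this because the right model treats even and odd levels asymmetrically.

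Your upper-bound plan has a genuine gap. A second-moment argument on the raw path count $X_k$ cannot give the sharp constant across all of $\mathcal P_a$: in the very model above, all $n$ edges out of a vertex are perfectly correlated, so for instance $\mathrm E[X_1^2]/\mathrm E[X_1]^2=1/p\asymp n$, and more generally the contribution of path pairs sharing vertices is governed by joint edge laws at common vertices, on which positive association gives \emph{lower}, not upper, bounds. The phrase ``$1$-independence makes vertex-disjoint paths independent'' is also not usable as stated, since all oriented paths from the origin share the origin. The paper takes a substantially different route. First it reduces to levelwise-independent models via the stochastic-domination lemma combining positive association with $1$-independence. Then, to show the cluster reaches size $n^c$ within $O(\log n)$ levels, it applies Paley--Zygmund not to $X_k$ but to the size of an \emph{active set} in a dynamic tree embedding: at each pair of levels one allocates to each active vertex roughly $n$ children and $n/2$ grandchildren (or vice versa), so that allocations are disjoint and the two-step growth factor is $\tfrac{n^2}{2}p^2>1$. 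Deciding which of the two allocation strategies to use requires a ``test sampling'' of a small fraction of outgoing edges to detect whether they are strongly or weakly correlated. Finally, a renormalisation to $\vv{\Z}^2$ glues these short trees into an infinite cluster. The $\sqrt 2$ on this side again comes from $n^2/2$, matching the lower bound.
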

While $p_a^+(\vv{\Z}^n)$ also goes to 0 as $n\to \infty$, we note that the first-order term differs. This shows that there exist positively associated, 1-independent models which `percolate worse' than oriented Bernoulli bond percolation even as $n \to \infty$. 

In the unoriented case, Kesten \cite{Kesten1990} and Gordon \cite{Gordon1991} studied the asymptotic behaviour of $p_c(\Z^n)$ and proved $p_c(\Z^n) = \frac{1}{2n} \cdot (1+o(1))$ as $n \to \infty$ (see also \cite{Hara1990,Bollobas1994} for refined asymptotics). It is interesting to compare this with the best known lower bound on $\lim_{n \to \infty} p^+(\Z^n)$: In \cite{Day2020}, it is proven that for all $n\ge 2$,
\begin{equation*}
	p^+(\Z^n) \ge 4 - 2 \sqrt{3} \approx 0.536.
\end{equation*}
The following direct corollary of Theorem \ref{thm:oriented-d-large} shows a very different behaviour of $p_a^+(\Z^n)$.
\begin{corollary}\label{thm:unoriented-d-large}
    \begin{equation*}
		p_a^+(\Z^n) \le   \frac{\sqrt{2}}{n} \cdot (1+o(1)) \quad \text{as}\ n \to \infty.
	\end{equation*}
\end{corollary}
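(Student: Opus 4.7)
The plan is to reduce Corollary \ref{thm:unoriented-d-large} directly to Theorem \ref{thm:oriented-d-large} by exploiting the natural bijection $\phi : E(\Z^n) \to \vv E(\Z^n)$ that sends each unoriented edge $\{x,y\}$ to the oriented edge $(x,y)$ with $x \le y$ componentwise (well-defined since $y - x = \pm e_k$ for some $k$). This bijection respects edge-incidence and the coordinate-wise partial order on configurations, so it transports any model on $\{0,1\}^{E(\Z^n)}$ to one on $\{0,1\}^{\vv E(\Z^n)}$ while preserving marginals, positive association, and $1$-independence.

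Given any non-percolating $\P \in \mathcal P_a(\Z^n, p)$, I would let $\tilde \P$ be the pushforward of $\P$ under the induced map on configuration spaces. Positive association transfers because $\phi$ is order-preserving on configurations, so increasing events correspond to increasing events; $1$-independence transfers because $\phi$ preserves distances between edges, provided distances on $\vv{\Z}^n$ are measured in the underlying undirected graph --- the natural convention alluded to in the paper's footnote on directed graphs. Marginals equal to $p$ are trivially preserved, so $\tilde \P \in \mathcal P_a(\vv{\Z}^n, p)$.

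The main step is then to check that $\tilde \P$ does not percolate. Suppose, for contradiction, that $\tilde \P$ admits an infinite directed open path $x_0 \to x_1 \to x_2 \to \cdots$ with positive probability. Since each step $x_{i+1} - x_i$ is a positive coordinate unit vector, the vertices $x_i$ are all distinct, and pulling the path back through $\phi^{-1}$ yields a simple infinite path in $\Z^n$ whose edges are all open under $\P$. This contradicts the assumption that $\P$ does not percolate. Hence every $p$ admissible in the definition of $p_a^+(\Z^n)$ is also admissible in that of $p_a^+(\vv{\Z}^n)$, so $p_a^+(\Z^n) \le p_a^+(\vv{\Z}^n)$; Theorem \ref{thm:oriented-d-large} then yields the stated bound.

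The argument is essentially formal and I do not expect a substantive obstacle; the only subtle point is the conventional choice of distance on $\vv{\Z}^n$ used in defining $1$-independence, which must be taken as the distance in the underlying undirected graph for $\phi$ to carry the property across.
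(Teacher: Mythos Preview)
Your argument is correct and is exactly the intended derivation: the paper does not give a separate proof but calls the statement a ``direct corollary'' of Theorem~\ref{thm:oriented-d-large}, and your transport of a non-percolating model along the edge bijection $\phi$ is precisely the natural way to read this. Your concern about the distance convention on $\vv{\Z}^n$ is also well placed and explicitly resolved by the paper's stipulation in Section~\ref{sec:background-and-notation} that graph distances on directed graphs are taken in the underlying undirected graph.
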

 This upper bound on $p_a^+(\Z^n)$ differs by a factor of $2\sqrt{2}$ from the natural lower bound provided by Bernoulli bond percolation. We would like to point out that it should be relatively straightforward to obtain the upper bound $p_a^+(\Z^n) \le \frac{1}{\sqrt{2}n} \cdot (1+o(1))$ by adapting our proof of Theorem \ref{thm:oriented-d-large} to the unoriented case (see Section \ref{sec:high-dimensional-percolation} for further comments). As we are lacking a non-trivial lower bound, we have no strong reason to believe that this improved upper bound is in fact optimal, and therefore, we decided not to provide a separate proof in the unoriented case.

1-independent bond percolation models were initially studied on the square lattice $\Z^2$ in \cite{Balister2005}. The best known bounds are currently due to \cite{Balister2022} and given by
\begin{equation*}
	0.55 \approx \frac{35-3\sqrt{33}}{32}\le p^+(\Z^2) \le 0.8457.
\end{equation*} 
For the subclass of positively associated models, we prove the existence of a positively associated bond percolation model on $\Z^2$ with marginals strictly larger than $1/2$ that does not percolate and establish a quantitative upper bound on $p^+_a(\Z^2)$.
\begin{theorem}\label{thm:unoriented-d-2}
	\begin{equation*}
	\frac{1}{2} < p^+_a(\Z^2) \le 0.77.
	\end{equation*} 
\end{theorem}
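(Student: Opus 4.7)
\emph{Proof plan.} The two inequalities are independent and I would prove them separately.

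\emph{Lower bound $p^+_a(\Z^2) > 1/2$.} The plan is to exhibit an explicit positively associated, $1$-independent bond percolation model on $\Z^2$ with edge marginal strictly above $1/2$ that fails to percolate. A natural candidate is the following site-decoration model: sample $(\eta_v)_{v\in\Z^2}$ i.i.d.\ $\mathrm{Bernoulli}(q)$ and declare $\omega(\{u,v\})=1$ iff $\eta_u=1$ or $\eta_v=1$. As a monotone function of i.i.d.\ variables, $\omega$ is positively associated by Harris--FKG; since any two vertex-disjoint edges depend on disjoint $\eta$-coordinates, it is $1$-independent; and its edge marginal is $1-(1-q)^2$, which exceeds $1/2$ whenever $q>1-1/\sqrt{2}$. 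Every open edge has at least one marked endpoint, so an elementary cluster analysis shows that two marked vertices lie in the same open component iff they can be joined by a chain of marked vertices at successive $\ell^1$-distance at most~$2$. Non-percolation of $\omega$ thus reduces to non-percolation of i.i.d.\ $\mathrm{Bernoulli}(q)$ site percolation on the auxiliary graph $G^{(2)}$ on $\Z^2$ whose edges join vertices at $\ell^1$-distance $\le 2$ (a $12$-regular graph). It then suffices to show $p_c^{\mathrm{site}}(G^{(2)})>1-1/\sqrt{2}$, which I would establish by a Peierls-type contour bound in a suitable matching graph.

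\emph{Upper bound $p^+_a(\Z^2)\le 0.77$.} For any $\P\in\mathcal{P}_a(\Z^2,p)$ with $p>0.77$, I would run a block renormalisation tailored to the positively associated setting. Fix a block side length $L$ and lower-bound, uniformly in $\P$, the probability of a horizontal crossing of an $L\times L$ box, boosting an initial FKG-path estimate with the square-root trick and the gluing of horizontal and vertical crossings that become available thanks to positive association. Because $1$-independence of $\P$ makes crossing events supported on boxes separated by at least two lattice steps independent, the induced coarse-grained bond model on the renormalised lattice is itself a $1$-independent positively associated model on a $\Z^2$-like graph, now with a far larger marginal. Once this coarse marginal exceeds the best known bound $p^+(\Z^2)\le 0.8457$ of \cite{Balister2022}, the original model must percolate. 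The explicit constant $0.77$ then arises from optimising $L$ together with the FKG-enhanced lower bound on the $L\times L$ crossing probability as a function of $p$.

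\emph{Main obstacle.} The lower bound is conceptually straightforward once the site-decoration model is identified, but still requires a quantitative Peierls-type estimate on a non-standard $12$-regular lattice. The more delicate part is the upper bound: extracting the specific numerical value $0.77$ demands a sharp quantitative interplay between positive association and $1$-independence in the crossing-probability step, carefully tracked through the renormalisation, so as to beat the naive bound one would obtain by simply combining Theorem~\ref{thm:stochastic-domination} with $p_c(\Z^2)=1/2$.
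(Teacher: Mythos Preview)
Your OR-decoration model and its reduction to Bernoulli site percolation on the $12$-regular graph $G^{(2)}$ (vertices joined when at $\ell^1$-distance $\le 2$) are both correct, but the final step fails. You need $p_c^{\mathrm{site}}(G^{(2)}) > 1-1/\sqrt{2} \approx 0.2929$, whereas this lattice is precisely the square lattice with NN$+$2NN$+$3NN bonds, for which numerical studies give $p_c^{\mathrm{site}} \approx 0.289$. So the required inequality is most likely false, and even if it held it would do so by a margin of order $10^{-3}$, utterly out of reach of a Peierls count on a $12$-regular graph (which yields at best $p_c > 1/11$). The paper avoids this numerical knife-edge by using an AND-type construction on the truncated square lattice $\mathbb T$: sample Bernoulli-$p$ sites on $\mathbb T$, declare an edge open iff both endpoints are open, then contract the short diagonals to obtain a model in $\mathcal P_a(\Z^2,p^2)$. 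Non-percolation reduces to $p_c^{\mathrm{site}}(\mathbb T) > 1/\sqrt 2$. The point is that the line graph $\mathbb L$ of subdivided $\Z^2$ satisfies $p_c^{\mathrm{site}}(\mathbb L) = \sqrt{p_c(\Z^2)} = 1/\sqrt 2$ \emph{exactly}, and $\mathbb T$ is obtained from $\mathbb L$ by deleting the two diagonals of each small square; the strict inequality $p_c^{\mathrm{site}}(\mathbb T) > p_c^{\mathrm{site}}(\mathbb L)$ then follows from an essential-enhancement argument \`a la Aizenman--Grimmett, with no numerical threshold estimate needed.

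\textbf{Upper bound.} The renormalisation framework is right, but the step you yourself flag as the main obstacle --- a uniform lower bound on box-crossing probabilities over all $\P \in \mathcal P_a(\Z^2,p)$ --- is not resolved by your proposal. The FKG bound on a single crossing path is only $p^L$, and the square-root trick and RSW-type gluing require symmetry or duality that a general $\P$ in this class need not have; there is no mechanism here that turns $p=0.77$ into a coarse-grained marginal above $0.8457$. The paper's device is specific: work with \emph{diagonal} boxes, orient their edges so that the anti-diagonals form a strict level structure, pass via Lemma~\ref{lem:levelwise-decoupling} to the levelwise-independent version of $\P$, and then (Lemma~\ref{lem:markov-chain-lemma-1}) dominate the level-by-level growth of the oriented cluster from below by an explicit Markov chain $(\mathcal X_i)$ whose transition kernel depends only on $p$. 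This produces computable lower bounds $q_{2w}(p)$ and $q_{4w+2}(p)$ on the square and long-rectangle crossing probabilities, uniform over $\mathcal P_a(\Z^2,p)$. The renormalised marginal is $q_{4w+2}(p)\cdot q_{2w}(p)$, the scheme is iterated, and a reliability-theory lemma reduces ``iterates tend to $1$'' to the single inequality $q_{4w+2}(p_0)\cdot q_{2w}(p_0) > p_0$, which is verified by an exact computer computation of the Markov chain at $w=20$, $p_0 = 0.77$. Without the Markov-chain comparison (or an equivalent uniform quantitative input) there is no route to the constant $0.77$.
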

On the one hand, restricting to positively associated models yields better upper bounds. On the other hand, the models constructed to bound $p^+(\Z^2)$ from below are not positively associated and so, even the qualitative lower bound $p^+_a(\Z^2) > \frac{1}{2}$ requires a new construction.
As in the high-dimensional case, we also study oriented percolation in two dimensions.
\begin{theorem}\label{thm:oriented-d-2}
	\begin{equation*}
		p_c^{\mathrm{site}}(\vv{\Z}^2) \le p_a^+(\vv{\Z}^2) \le  \sqrt{p_c^{\mathrm{site}}(\vv{\Z}^2)}.
	\end{equation*} 
\end{theorem}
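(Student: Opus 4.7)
For the lower bound $p_a^+(\vv{\Z}^2) \ge p_c^{\mathrm{site}}(\vv{\Z}^2)$, I would exhibit, for every $q < p_c^{\mathrm{site}}(\vv{\Z}^2)$, a non-percolating model in $\mathcal{P}_a(\vv{\Z}^2, q)$. Let $\sigma = (\sigma_x)_{x \in \Z^2}$ be i.i.d.\ Bernoulli$(q)$, and declare the oriented edge $(x, y)$ open iff $\sigma_x = 1$, i.e., iff its tail is open. The resulting bond model has marginals $q$, it is positively associated as an increasing function of the independent family $\sigma$, and it is 1-independent because two bonds without a common vertex have distinct tails whose $\sigma$-values are independent. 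Moreover, an infinite open oriented bond path is exactly an infinite open oriented path of $\sigma$-open sites, which by assumption does not exist almost surely. Letting $q \uparrow p_c^{\mathrm{site}}(\vv{\Z}^2)$ yields the claim.

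For the upper bound, let $\omega \sim \P \in \mathcal{P}_a(\vv{\Z}^2, p)$ with $p > \sqrt{p_c^{\mathrm{site}}(\vv{\Z}^2)}$, and define an oriented site configuration on $\vv{\Z}^2$ by
\[
Y_v \; := \; \omega((v, v + e_1)) \cdot \omega((v, v + e_2)),
\]
so that $Y_v = 1$ exactly when both forward bonds of $v$ are open. Along any infinite open oriented path $v_0 \to v_1 \to \cdots$ in $Y$, each bond $(v_i, v_{i+1})$ is forced open by $Y_{v_i} = 1$, yielding an infinite open oriented bond path in $\omega$; hence it suffices to show that $Y$ percolates as an oriented site model. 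The site model $Y$ is positively associated, since each $Y_v$ is an increasing function of the positively associated $\omega$, and applying FKG to the $2|T|$ pairwise distinct edges appearing in $\{Y_v\}_{v \in T}$ gives the principal-upset bound
\[
\P\bigl[\,Y_v = 1 \text{ for every } v \in T\,\bigr] \; \ge \; p^{2|T|} \qquad \text{for every finite } T \subset \Z^2.
\]

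The plan is to upgrade this to the full stochastic domination $Y \gg \pi_{p^2}^{\mathrm{site}}$; since $p^2 > p_c^{\mathrm{site}}(\vv{\Z}^2)$, the dominated Bernoulli site measure percolates, which then forces $Y$ (and hence $\omega$) to percolate, giving $p_a^+(\vv{\Z}^2) \le \sqrt{p_c^{\mathrm{site}}(\vv{\Z}^2)}$. The hard part will be this stochastic domination step, since a principal-upset inequality does not imply stochastic domination by a product measure in general. Here I would invoke the new combination of positive association with 1-independence announced in the introduction: any two pairs $\{(v, v+e_1),(v, v+e_2)\}$ and $\{(w, w+e_1),(w, w+e_2)\}$ with $\|v - w\|_\infty \ge 2$ involve edges at bond distance at least one, hence are 1-independent in $\omega$, so only short-range correlations between neighbouring $Y_v$'s need to be controlled. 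These are handled by revealing the sites in a lexicographic order and using FKG together with 1-independence to lower-bound each conditional probability $\P[Y_v = 1 \mid Y_F]$ by $p^2$, from which a standard monotone coupling produces the required domination. This conditional estimate, which crucially exploits that both edges defining $Y_v$ emanate from the single vertex $v$, is the technical heart of the upper bound.
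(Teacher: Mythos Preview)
Your lower bound is correct and coincides with the paper's argument.

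Your upper bound, however, has a genuine gap: the stochastic domination $Y \succeq \pi_{p^2}^{\mathrm{site}}$ that you aim for is \emph{false} in general, and so is the conditional estimate $\P[Y_v=1\mid Y_F]\ge p^2$ you hope to establish. Here is a concrete counterexample. Let $(X_w)_{w\in\Z^2}$ be i.i.d.\ Bernoulli($p$) and set $\omega((u,v))=X_v$ (the state of an edge depends only on its head). This model lies in $\mathcal P_a(\vv{\Z}^2,p)$: it has marginals $p$, it is an increasing function of independent variables, and two edges without a common vertex have distinct heads, hence independent states. For this model $Y_v=X_{v+e_1}X_{v+e_2}$, and for $w=v+e_1-e_2$ (which lies in the same diagonal level as $v$) one has $Y_w=X_{v+e_1}X_{v+2e_1-e_2}$. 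Writing $A=X_{v+e_1}$, $B=X_{v+e_2}$, $C=X_{v+2e_1-e_2}$, a direct computation gives
\[
\P[Y_v=1\mid Y_w=0]=\frac{p^2}{1+p}<p^2,
\]
and for the increasing event $\{Y_v=1\}\cup\{Y_w=1\}$,
\[
\P[Y_v=1\text{ or }Y_w=1]=p(2p-p^2)=2p^2-p^3<2p^2-p^4=\pi_{p^2}^{\mathrm{site}}[Y_v=1\text{ or }Y_w=1].
\]
Thus neither the conditional lower bound nor the stochastic domination holds. The obstruction is structural: the pair of edges defining $Y_w$ contains $(w,w+e_2)=(v+e_1-e_2,v+e_1)$, which lies in $\cl_1(F_v)$; conditioning on $Y_w=0$ is therefore $\cl_1(F_v)$-decreasing, and Lemma~\ref{lem:positive-association-k-independence} gives no control. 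No revealing order avoids this, since $w$ is a same-level neighbour of $v$.

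The paper takes a different route. It does not attempt to dominate a product \emph{site} measure. Instead it first passes (Lemma~\ref{lem:levelwise-decoupling}) to the levelwise independent version $\hat\P_L$, so that the cluster $(\vv{\mathcal C}_0\cap L_i)_{i\ge0}$ becomes a Markov chain. It then introduces an explicit auxiliary Markov chain $(\mathcal X_i)_{i\ge0}$ on subsets of the levels and shows two separate comparisons: the cluster under $\hat\P_L$ dominates $(\mathcal X_i)$ (Lemma~\ref{lem:markov-chain-lemma-1}), and $(\mathcal X_i)$ in turn dominates the cluster under the specific bond model $\pi_{p,p}$ defined by $\eta((u,v))=Z^+(u)Z^-(v)$ with independent Bernoulli($p$) variables (Lemma~\ref{lem:markov-chain-lemma-2}). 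Since $\pi_{p,p}$ percolates exactly when $p^2>p_c^{\mathrm{site}}(\vv{\Z}^2)$, this chain of cluster-level dominations yields the upper bound. The key difference from your plan is that the comparison is made at the level of the \emph{cluster process along diagonals}, interval by interval, rather than at the level of site configurations; this is what circumvents the failure of $Y\succeq\pi_{p^2}^{\mathrm{site}}$.
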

Combined with the lower bound $p_c^{\mathrm{site}}(\vv{\Z}^2) \ge 2/3$ \cite{Bishir1963} and the upper bound $p_c^{\mathrm{site}}(\vv{\Z}^2) \le 0.7491$ \cite{balister1994improved}\footnote{As pointed out in \cite{balister1994improved}, the upper bound $p_c^{\mathrm{site}}(\vv{\Z}^2) \le 0.72599$ given in \cite{Balister1993} is not correct. It should have been $p_c^{\mathrm{site}}(\vv{\Z}^2) \le 0.762$.}, the theorem yields the quantitative bounds $2/3 \le p^+_a(\vv{\Z}^2) \le  0.8656.$

\subsection*{Organization of the Paper}
In the next section, we introduce necessary background and notation. In Section \ref{sec:trees}, we present the proof of Theorem \ref{thm:trees}. In Section \ref{sec:stochastic-domination}, we prove a simple but crucial lemma on combining positive association and finite-range dependence, which we then use to establish Theorem \ref{thm:stochastic-domination} as well as a stochastic domination result that will allow us to restrict our attention `levelwise independent' models. In Section \ref{sec:two-dimensional-percolation}, we present the proofs of Theorems \ref{thm:unoriented-d-2} and \ref{thm:oriented-d-2} on two-dimensional percolation. High-dimensional percolation is studied in Section \ref{sec:high-dimensional-percolation}, where we prove Theorem \ref{thm:oriented-d-large}. Finally, in Section \ref{sec:open-questions}, we discuss some directions for future research.

\subsection*{Acknowledgements}
We are grateful to Vincent Tassion for proposing the study of positively associated, finite-range dependent percolation models, and we  would like to thank him for many stimulating discussions and insightful inputs, particularly regarding Theorem \ref{thm:trees}. We also thank François Bienvenu for valuable inputs related to branching processes.

The first author is part of NCCR SwissMAP and  has received funding from the European Research Council (ERC) under the European Union’s Horizon 2020 research and innovation program (grant agreement No 851565).

\section{Background and Notation}
\label{sec:background-and-notation}

\paragraph{Graph Distances.} Let $G$ be an undirected graph with vertex set $V$ and edge set $E$. An undirected path $\gamma=(\gamma_i)_{i=0}^n \subseteq V$ is a sequence of distinct vertices such that for every $1\le i \le n$, $\{\gamma_{i-1},\gamma_i\} \in E$. For vertices $u,v \in V$, edges $e,f \in E$, the usual graph distances are given by 
\begin{align*}
	&d(u,v):=\inf\{n \ge 0 : \exists \ \text{path}\ \gamma \ \text{with} \ \gamma_0 = u, \gamma_n =v \} \in \mathbb N \cup \{\infty\}, \\
	&d(e,v):=\inf\{d(u,v) : u \in e\},  \\
	& d(e,f):=\inf\{d(u,v) : u \in e, v \in f \}.  
\end{align*}
Theses distances naturally extend to subsets $U \subseteq V$ and $F \subseteq E$ by setting, for example, $d(e,F):= \inf\{d(e,f):f \in F\}$ and $d(e,U):= \inf\{d(e,u):u \in U\}$.
Moreover, for $F \subseteq E$ and $k\ge 0$, we define the \emph{$k$-closure} of $F$ by
\begin{equation}
	\cl_k(F) = \begin{cases}
		\{e \in E : d(e,F) < k\} &\text{if}\ k \ge 1,\\
		F &\text{if}\ k=0.
	\end{cases}
\end{equation}
and the \emph{$k$-edge boundary} $\Delta_k F := \cl_k(F) \setminus F$.
When considering graph distances on a directed graph $G=(V,\vv{E})$, we will, unless mentioned explicitly, refer to the graph distances in the corresponding undirected graph, i.e.\ with respect to the length of the shortest undirected path.

In an undirected graph, we define the neighborhood of a vertex set $W\subseteq V$ as $N(W)=\{v \in V: \{u,v\} \in E\ \text{for some}\ u \in W\}$. The degree of a vertex $v \in V$ is then defined as the size of its neighborhood $N(v):= N(\{v\})$.  In a directed graph, we define two neighborhoods of a vertex set $W$, namely $N_+(W)=\{v \in V: (u,v) \in \vv{E}\ \text{for some}\ u \in W\}$ and  $N_-(W)=\{u \in V: (u,v) \in \vv{E}\ \text{for some}\ v \in W\}$. The outdegree (resp.\ indegree) of a vertex $v \in V$ is then defined as the size of $N_+(W)$ (resp.\ $N_-(W)$).

\paragraph{Percolation Model.} Given an undirected (resp.\ directed) graph $G$ with vertex set $V$ and edge set $E$ (resp.\ $\vv{E}$), we refer to a probabilitiy measure $\mathrm{P}$ on the space of bond percolation configurations $\{0,1\}^E$ (resp. $\{0,1\}^{\vv{E}}$) equipped with the product sigma-algebra as a bond percolation model (resp.\ oriented bond percolation model). A probability measure $\mathrm{P}$ on the space of site percolation configurations $\{0,1\}^V$ equipped with the product sigma-algebra is called a site percolation model. 

For $F \subseteq E$ (resp. $\vv{E}$), we say that an event is \emph{supported on $F$} if it is measurable with respect to the status of the edges in $F$. 
We write $\{u \xleftrightarrow{\omega} v\}$ for the event that there exists an open path from $u$ to $v$ in the percolation configuration $\omega$. In the case of a directed graph, we similarly write $\{u \xrightarrow{\omega} v\}$ if there exists an open directed path from $u$ to $v$.
We denote the cluster of a vertex $v \in V$ by $\mathcal{C}_v(\omega) := \{u \in V : v \xleftrightarrow{\omega} u\}$. In the case of an directed graph, we similarly write $\vv{\mathcal{C}}_v(\omega) := \{u \in V : v \xrightarrow{\omega} u\}$ for the directed cluster of $v$ that contains all vertices that are reachable from $v$. 

We say that a percolation model $\mathrm{P}$ on an undirected (resp.\ directed) graph \emph{percolates} if the percolation configuration $\omega$ sampled according to $\mathrm{P}$ contains an infinite cluster (resp.\ an infinite directed cluster) with positive probability. We write $\pi_p$ for the Bernoulli bond percolation model on an undirected or directed graph $G$ and write $p_c(G):= \sup\{p : \pi_p\ \text{does not percolate}\}$ for the critical probability. Analogously, we write $\pi_p^{\mathrm{site}}$ for the Bernoulli site percolation model on $G$ with critical probability $p_c^{\mathrm{site}}(G)$.

\paragraph{Positive Association.} A function $f:\{0,1\}^E \to \mathbb R$ is \emph{increasing} if $f(x) \le f(y)$ whenever $x \le y$ (in the natural partial ordering on $\{0,1\}^E$) and an event $A$ is increasing if $\mathbf{1}_A$ is increasing. A probability measure $\mathrm{P}$ is positively associated if any two increasing events $A, B$ are positively correlated under $\mathrm{P}$, i.e.
\begin{equation*}
	\mathrm{P}\left[A \cap B\right] \ge \mathrm{P}\left[A \right] \cdot \mathrm{P}\left[B\right].
\end{equation*}
This is usually referred to as the FKG inequality \cite{fortuin1971correlation}. 

\paragraph{Finite-range Dependence.} 
For $k \in \Z_+ := \{x \in \Z: x \ge 0\}$, we say that a bond percolation model $\mathrm{P}$  is $k$-independent if if the edge statuses in $F_1$ are independent of the edge statuses in $F_2$ whenever $F_1,F_2 \subseteq E$ are disjoint and at distance at least $k$ in $G$. Moreover, $\mathrm{P}$ is called finite-range dependent if it is $k$-independent for some $k \in \Z_+$. 

\paragraph{Stochastic Domination.} Let $S$ be a Polish space equipped with the Borel $\sigma$-algebra $\mathcal S$ and a closed partial ordering $\leq$. As before, a function $f:S \to \mathbb R$ is \emph{increasing} if $f(x) \le f(y)$ for every $x \leq y$.  A probability measure $\mu$ is said to be \emph{stochastically dominated} by another probability measure $\nu$, denoted $\mu \preceq \nu$, if for every increasing and bounded measurable function $f:S \to \mathbb R$, 
\begin{equation}
	\mathbb{E}_\mu[f] \le \mathbb{E}_\nu[f].
\end{equation}
Strassen's theorem \cite{Strassen1965} (see also \cite{Lindvall1999}) states that two probability measures $\mu$ and $\nu$ satisfy $\mu \preceq \nu$ if and only if there exists a coupling of $X \sim \mu$ and $Y \sim \nu$ (on some probability space) such that almost surely $X \le Y$. 

We will make use of the following result due to Kamae, Krengel and O’Brien
for the comparison of two stochastic processes that is based on Strassen’s theorem. Let $(S_n)_{n\ge 0}$ be Polish spaces, each equipped with the Borel $\sigma$-algebra and a closed partial ordering $\leq$. Write $S^n$ for $S_0 \times \ldots \times S_n$, $x^n = (x_0,\ldots,x_n)$ for an element of $S^n$, and $x^n \le y^n$ iff $x_i \le y_i$ for every $0\le i\le n$.
\begin{theorem}[Theorem 2 in \cite{Kamae1977}] \label{thm:Kamae}
	Let $\mathrm{P}_0$ and $\mathrm{Q}_0$ be probability measures on $S_0$ satisfying $\mathrm{P}_0 \preceq \mathrm{Q}_0$. For $n\ge 1$, let $p_n,q_n$ be stochastic kernels on $S^{n-1}\times S_n$ satisfying
	\begin{equation}
		\forall x^{n-1} \le y^{n-1}, \quad p_n(x^{n-1},\boldsymbol{\cdot}) \preceq q_n(y^{n-1},\boldsymbol{\cdot}).
	\end{equation}
	Then there exist stochastic processes $(X_n)_{n\ge 0}$ and $(Y_n)_{n\ge 0}$ (on some probability space) such that $X_0 \sim \mathrm{P}_0$, $Y_0 \sim \mathrm{Q}_0$, for every $n\ge 1$ and $x^{n-1}$ (resp.\ $y^{n-1}$), the conditional distribution of $X_n$ given $X^{n-1} = x^{n-1}$ (resp.\ $Y_n$ given $Y^{n-1} = y^{n-1}$) is $p_n(x^{n-1},\boldsymbol{\cdot})$ (resp.\ $q_n(y^{n-1},\boldsymbol{\cdot})$), and almost surely
	\begin{equation}
		\forall n \ge 0, \quad X_n \le Y_n.
	\end{equation}
\end{theorem}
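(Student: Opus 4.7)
The plan is to construct the coupling by induction on $n$, using Strassen's theorem at each step. First I would apply Strassen's theorem directly to $\mathrm{P}_0 \preceq \mathrm{Q}_0$ to obtain a base coupling $(X_0, Y_0)$ on some probability space with $X_0 \sim \mathrm{P}_0$, $Y_0 \sim \mathrm{Q}_0$, and $X_0 \le Y_0$ almost surely.

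For the inductive step, assume $(X_0,\ldots,X_{n-1})$ and $(Y_0,\ldots,Y_{n-1})$ have already been constructed on a common probability space with the correct conditional distributions and with $X_i \le Y_i$ almost surely for each $i \le n-1$. For any deterministic pair $x^{n-1} \le y^{n-1}$, the hypothesis $p_n(x^{n-1},\boldsymbol{\cdot}) \preceq q_n(y^{n-1},\boldsymbol{\cdot})$ combined with Strassen's theorem yields a probability measure $\lambda_{x^{n-1},y^{n-1}}$ on $S_n \times S_n$ with the correct marginals and supported on $\{(a,b) : a \le b\}$. Sampling $(X_n,Y_n)$ conditionally on $(X^{n-1},Y^{n-1})$ from $\lambda_{X^{n-1},Y^{n-1}}$ then preserves the marginal conditional distributions required at step $n$ and maintains the monotonicity $X_n \le Y_n$.

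The main technical obstacle, and the step I expect to be the most delicate, is that the family $\{\lambda_{x^{n-1},y^{n-1}}\}$ must be chosen so as to depend measurably on its parameter; only then does the above conditional sampling define a valid stochastic kernel. To handle this, I would invoke a measurable selection argument. The set of couplings of $p_n(x^{n-1},\boldsymbol{\cdot})$ and $q_n(y^{n-1},\boldsymbol{\cdot})$ concentrated on $\{(a,b) : a \le b\}$ is nonempty by Strassen, and, when equipped with the topology of weak convergence, it is a closed, convex subset of the Polish space of probability measures on $S_n \times S_n$. One checks that this correspondence is measurable (in fact upper hemicontinuous under mild assumptions) in $(x^{n-1},y^{n-1})$, so that the Kuratowski--Ryll-Nardzewski selection theorem furnishes a Borel measurable choice $(x^{n-1},y^{n-1}) \mapsto \lambda_{x^{n-1},y^{n-1}}$, which serves as our kernel.

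The recursive construction then produces consistent finite-dimensional distributions for the joint process $((X_n,Y_n))_{n\ge 0}$ on $\prod_n (S_n \times S_n)$ with all required properties. Since each $S_n$ is Polish, Kolmogorov's extension theorem furnishes a probability space carrying the full process, and the pointwise monotonicity $X_n \le Y_n$ together with the specified conditional distributions hold by construction.
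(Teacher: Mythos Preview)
The paper does not give its own proof of this statement: it is quoted verbatim as Theorem~2 of Kamae, Krengel and O'Brien and is used as a black box throughout. So there is no argument in the paper to compare yours against.

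That said, your outline is the standard one and matches how the original reference proceeds: couple at time $0$ via Strassen, then inductively feed the already-ordered histories into a measurably selected Strassen coupling of $p_n(x^{n-1},\cdot)$ and $q_n(y^{n-1},\cdot)$, and finally extend via Kolmogorov. The one point worth tightening if you ever write this out in full is the measurable selection step: you should be explicit that the map $(x^{n-1},y^{n-1})\mapsto (p_n(x^{n-1},\cdot),q_n(y^{n-1},\cdot))$ is Borel because $p_n,q_n$ are stochastic kernels, and that the set of monotone couplings of a pair $(\mu,\nu)$ is a closed subset of the probability measures on $S_n\times S_n$ depending measurably on $(\mu,\nu)$; Kuratowski--Ryll-Nardzewski then applies exactly as you say. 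Nothing in your sketch is wrong, but in the present paper this theorem is simply imported, not reproved.
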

In Section \ref{sec:two-dimensional-percolation}, we will apply also this result in the Markovian setting where the transition kernel depends on $x^{n-1}$ only via the previous state $x_{n-1}$ and $(S_n)_{n\ge 0}$ are countable. In this case, it is convenient to view the process as a time-homogeneous Markov chain on the larger state space $S := \bigcup_{n\ge 0} S_n$, with initial distribution supported on $S_0$ and with transition probability $p$ supported on transitions from $S_{n-1}$ to $S_n$. The condition on the stochastic kernels is then replaced by the condition that the transition probabilities $p$, $q$ satisfy
\begin{equation}
	\forall n \ge 0,\ \forall x \le y \in S_n, \quad p(x,\boldsymbol{\cdot}) \preceq q(y,\boldsymbol{\cdot}).
\end{equation}

\paragraph{Branching Number.} To identify the percolation threshold for infinite, locally finite trees, the branching number was first introduced in \cite{Lyons1990}. For a detailed discussion, we refer to \cite{Lyons2016}. We will make use of the definition presented in Proposition 2.1 in \cite{Lyons1990}. A flow on an infinite, locally finite tree $T=(V,E)$ with root $o \in V$ is a non-negative function $\theta$ on $V$ such that for all $v \in V$,
$$
\theta(v)=\sum_{w \in N^+(v)} \theta(w) .
$$
A flow $\theta$ such that $\theta(o)=1$ is called a unit flow. The set of unit flows on $T$ is denoted $U(T)$. We define the branching number as 
\[\operatorname{br} T=\sup _{\theta \in U(T)} \liminf_{d(o,v) \rightarrow \infty} \theta(v)^{-1 /d(o,v)} .\]
While the definition a priori depends on the choice of the root, one finds that it actually does not depend on the root.

\section{Proof of Theorem \ref{thm:trees}}
\label{sec:trees}

Let $T=(V,E)$ be an infinite, locally finite tree and fix some root $o \in V$. Let $\P$ be a positively associated, $k$-independent bond percolation model with marginals $p>1/\br(T)$ and sample $\omega\sim\P$ on a general probability space whose measure we denote by $\mathbb P$. Our goal is to prove that
\begin{equation}\label{eq:7}
	\mathbb P[o\xleftrightarrow{\omega} \infty] >0.
\end{equation}
The second moment method is the standard tool to establish the existence of an infinite cluster on a tree, and we also follow this route. However, we first make use of an inhomogeneous, independent depletion of edges to facilitate the computation of the first and second moment. This key idea is due to Vincent Tassion who has kindly given permission to publish it in this form.

To this end, we write $q_x := \P[o \xleftrightarrow{\omega}x]$ for the probability that  $x \in V$ is connected to the root, which satisfies $q_x \ge p^{d(o,x)}$ due to positive association, and define for every edge $e=\{x,y\} \in E$ with $x$ closer to the root than $y$, 
\begin{equation}
	p_e := \frac{q_x}{q_y} \cdot p.
\end{equation}
Importantly, positive association implies that $q_y \ge q_x \cdot p$ and so $p_e \in [0,1]$ for every  $e \in E$. 
Now, let $\xi = (\xi_e)_{e \in E } \in \{0,1\}^E$ be a family of random variables with $\mathbb P[\xi_e = 1] = p_e$ that are independent of each other and of $\omega$. We define the bond percolation $\eta = (\eta_e)_{e\in E}$ by 
\begin{equation}
	\eta_e = \omega_e \cdot \xi_e.
\end{equation}
In words, it is obtained from $\omega$ by independently depleting each $\omega$-open edge $e$ with probability $1-p_e$. 
We note that the law of $\eta$ is $k$-independent and positively associated (as an increasing function of $\omega$ and $\xi$). Introducing $\eta$ is motivated by the observations that for all $x \in V$ and for all $x' \in [o,x]$, the unique path from $o$ to $x$, we have
\begin{align} \label{eq:9}
	 &\mathbb P[o \xleftrightarrow{\eta} x] = q_x \cdot \prod_{e \in [o,x]} p_e = p^{d(o,x)}, \quad \text{and}\\ 
	\label{eq:10}
  &\mathbb P[x' \xleftrightarrow{\eta} x] \le \frac{\mathbb P[o \xleftrightarrow{\eta} x]}{\mathbb P[o \xleftrightarrow{\eta} x']} = p^{d(x',x)}.
\end{align}

We are now ready to prove 
\begin{equation}\label{eq:8}
	\mathbb P[o\xleftrightarrow{\eta} \infty] >0,
\end{equation}
which then implies \eqref{eq:7} since $\omega \ge \eta$. From now on, we closely follow the proof of Theorem 6.2 in \cite{Lyons1990}. Denote by $V_n$ the vertices at distance $n$ from the root. For any unit flow $\theta$, set
\begin{equation}
	X_n^\theta(\eta) := \sum_{x \in V_n} \frac{\theta(x)}{p^{n}} \mathbf{1}_{\{o \xleftrightarrow{\eta}x\}}.
\end{equation}
We note that by \eqref{eq:9}, the first moment of $X_n^\theta$ is simply 
\begin{equation}
	\mathbb E[X_n^\theta] = \sum_{x \in V_n} \frac{\theta(x)}{p^{n}} \mathbb P[o \xleftrightarrow{\eta}x] = \sum_{x \in V_n} \theta(x) = \theta(o) = 1.
\end{equation}
We are left with bounding the second moment. To this end, for $x,y \in V$, we denote by $z = x\wedge y$ the last vertex on the path from $o$ to $x$ which is also on the path from $o$ to $y$, and by $z'$ the vertex on the path from $z$ to $y$ which is at distance exactly $k$ from $z$ (by convention, $z' = y$ if $d(z,y)<k$). Then we have 
\begin{align}
	\mathbb P[o \xleftrightarrow{\eta} x, o \xleftrightarrow{\eta} y] \le \mathbb P[o \xleftrightarrow{\eta} x] \cdot \mathbb P[z' \xleftrightarrow{\eta} y] \le p^{d(o,x)} \cdot p^{d(z,y)-k},
\end{align}
where we used the $k$-independence in the first  and \eqref{eq:9} together with \eqref{eq:10} in the second inequality. Hence,
\begin{align}
	\mathbb E\Big[\big(X_n^\theta\big)^2\Big] &= \sum_{x,y \in V_n} \frac{\theta(x)\theta(y)}{p^{2n}} \mathbb P[o \xleftrightarrow{\eta} x, o \xleftrightarrow{\eta} y] \\
	&\le \sum_{z:\, d(o,z)\le n}\ \sum_{x,y \in V_n:\, x\wedge y = z} \frac{\theta(x)\theta(y)}{p^{d(o,z)+k}} \\
	&= p^{-k} \sum_{z:\, d(o,z)\le n} p^{-d(o,z)} \sum_{x,y \in V_n:\, x\wedge y = z} \theta(x)\theta(y) \\
	&\le p^{-k} \sum_{z:\, d(o,z)\le n} p^{-d(o,z)} \theta(z)^2
\end{align}
Since $p^{-1} < \br(T)$, we may choose $\theta$ such that $C:=\sum_{z \in V} p^{-d(o,z)} \theta(z)^2 <\infty$, which implies that $\sup_n 	\mathbb E[(X_n^\theta)^2]$ is finite. Using the Paley-Zygmund inequality, we deduce
\begin{equation}
	\lim_{n \to \infty} \mathbb P[ X_n^\theta >0] \ge \lim_{n \to \infty} \frac{\mathbb E\big[X_n^\theta\big]^2}{\mathbb{E}\Big[\big(X_n^\theta\big)^2\Big]} \ge \frac{p^k}{C},
\end{equation} 
which implies \eqref{eq:8} and thereby completes the proof.

\section{Positive Association and Finite-range Dependence}
\label{sec:stochastic-domination}

We begin with a simple but powerful lemma combining positive association and $k$-independence. It provides a way of controlling the propagation of information in positively associated, $k$-independent percolation models, and will thus be a crucial ingredient in proving Theorems \ref{thm:stochastic-domination}--\ref{thm:oriented-d-2}. 

Let us introduce the following notation. For $F \subseteq E$, we say that an event $A$ is \emph{$F$-increasing} if 
\begin{equation*}
	\left( \omega \in A, \ \omega \le \omega' \ \text{on} \ F, \ \text{and} \ \omega = \omega' \ \text{on} \ F^c \right) \implies \omega' \in A.
\end{equation*}
Intuitively, an event is $F$-increasing if it is favoured by the occurrence of open edges in $F$. However, open edges outside of $F$ might not be favourable. By definition, an $F$-increasing event is $F'$-increasing for every subset $F' \subseteq F$. In the special case $F=E$, we recover the classical notion of an increasing event.
We analogously say that $A$ is \emph{$F$-decreasing} if 
\begin{equation*}
	\left( \omega \in A, \ \omega \ge \omega' \ \text{on} \ F, \ \text{and} \ \omega = \omega' \ \text{on} \ F^c \right) \implies \omega' \in A,
\end{equation*}
and note that $A$ is $F$-decreasing if and only if $A^c$ is $F$-increasing.

\begin{lemma}\label{lem:positive-association-k-independence}
	Let $\P$ be a probability measure on  $\{0,1\}^E$ and $k\ge 0$. Then the following are equivalent:
	\begin{itemize}
		\item[(i)] $\P$ satisfies positive association and $k$-independence.
		\item[(ii)]  For every $F \subseteq E$, for every $F$-increasing event $A$ supported on $F$, and for every $\cl_k(F)$-increasing event $B$,
		\begin{equation}
			\P[A\cap B] \ge \P[A] \cdot \P[B].
		\end{equation}
	\end{itemize}
\end{lemma}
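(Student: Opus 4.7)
I would tackle the two implications separately. The direction (ii) $\Rightarrow$ (i) is immediate by specialisation. Taking $F = E$ gives $\cl_k(F) = E$, so that (ii) reduces to the ordinary FKG inequality and hence to positive association. For $k$-independence, I would fix disjoint $F_1, F_2 \subseteq E$ at distance at least $k$ and take $F = F_1$ in (ii): since $F_2 \subseteq \cl_k(F_1)^c$, any event $C$ supported on $F_2$ depends on none of the edges of $\cl_k(F_1)$, so both $C$ and $C^c$ are trivially $\cl_k(F_1)$-increasing. Applying (ii) to any $F_1$-increasing, $F_1$-supported event $A$ against both $C$ and $C^c$ forces $\P[A \cap C] = \P[A]\P[C]$, and a standard $\pi$-system / monotone class argument extends this to full independence of $\omega|_{F_1}$ and $\omega|_{F_2}$.

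For (i) $\Rightarrow$ (ii), set $G := \cl_k(F)$. Being $F$-increasing and supported on $F$, the event $A$ is in fact a \emph{globally} increasing event in $\{0,1\}^E$. For each $\eta \in \{0,1\}^{G^c}$, the $\eta$-slice $B_\eta := \{\sigma \in \{0,1\}^G : (\sigma, \eta) \in B\}$ is an increasing subset of $\{0,1\}^G$ because $B$ is $\cl_k(F)$-increasing, so that $\tilde B_\eta := \{\omega : \omega|_G \in B_\eta\}$ is a globally increasing event depending only on $\omega|_G$. This yields the disjoint decomposition $B = \bigsqcup_\eta (\tilde B_\eta \cap \{\omega|_{G^c} = \eta\})$. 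The distance $d(F, G^c) \ge k$ together with $k$-independence forces $\omega|_F \perp \omega|_{G^c}$, and in particular $\P[A \mid \omega|_{G^c} = \eta] = \P[A]$ for $\P$-a.e.\ $\eta$. My target is then the conditional FKG-type inequality $\P[A \cap \tilde B_\eta \mid \omega|_{G^c} = \eta] \ge \P[A] \cdot \P[\tilde B_\eta \mid \omega|_{G^c} = \eta]$ for each such $\eta$; multiplying by $\P[\omega|_{G^c} = \eta]$, summing, and invoking the decomposition immediately yields $\P[A \cap B] \ge \P[A] \P[B]$.

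I plan to establish this conditional inequality via a monotone coupling: a joint law of $\omega \sim \P$ and $\omega^A \sim \P[\cdot \mid A]$ with $\omega \le \omega^A$ pointwise and, additionally, $\omega|_{G^c} = \omega^A|_{G^c}$. The second requirement is admissible because the marginal laws of $\P$ and $\P[\cdot \mid A]$ on $\omega|_{G^c}$ coincide by the $k$-independence just noted. Given such a coupling, the $\cl_k(F)$-increasing property of $B$ applied along the common $G^c$-slice gives $\mathbf{1}_B(\omega) \le \mathbf{1}_B(\omega^A)$ almost surely, and the conclusion $\P[B] \le \P[B \mid A]$ follows by taking expectations. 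The main obstacle I anticipate is precisely the construction of this coupling: Strassen's theorem applied conditionally on $\omega|_{G^c} = \eta$ reduces it to the stochastic domination $\P[\cdot \mid A, \omega|_{G^c} = \eta] \succeq \P[\cdot \mid \omega|_{G^c} = \eta]$ on $\{0,1\}^G$, from which the desired conditional FKG inequality follows. Since positive association is in general not preserved under conditioning on the exact value of a subset of coordinates, closing this gap is the delicate step; I plan to exploit that, under the conditional law, the marginal of $\omega|_F$ equals the unconditional marginal (by $k$-independence) and that $A$ depends only on $\omega|_F$, thereby reducing the problem to an application of global positive association of $\P$ against a suitably chosen globally increasing proxy for the non-monotone cylinder $\{\omega|_{G^c} = \eta\}$.
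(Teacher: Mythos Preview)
Your treatment of (ii)$\Rightarrow$(i) is fine and matches the paper. The decomposition you set up for (i)$\Rightarrow$(ii) is also the right one. The gap is in the execution of the conditional step.

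Your Strassen reduction is circular. The stochastic domination $\P[\,\cdot\mid A,\ \omega|_{G^c}=\eta]\succeq \P[\,\cdot\mid \omega|_{G^c}=\eta]$ on $\{0,1\}^G$ means precisely that for every increasing $C$ supported on $G$,
\[
\P[A\cap C\cap\{\omega|_{G^c}=\eta\}]\ \ge\ \P[A]\cdot \P[C\cap\{\omega|_{G^c}=\eta\}],
\]
using $A\perp\{\omega|_{G^c}=\eta\}$ to simplify the denominator. But this \emph{is} your target conditional inequality (with $\tilde B_\eta$ a special case of such $C$). So invoking Strassen does not buy anything: you still need an independent argument for exactly this correlation inequality, and ``positive association is not preserved under conditioning on a cylinder'' is a real obstruction, not a technicality.

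The paper closes the gap with the concrete ``globally increasing proxy'' you allude to but do not write down. Split the cylinder as $\{\omega|_{G^c}=\eta\}=C_\eta\cap D_\eta$, where $C_\eta=\{\omega_e=1\ \forall e\in G^c\text{ with }\eta_e=1\}$ is globally increasing and $D_\eta=\{\omega_e=0\ \forall e\in G^c\text{ with }\eta_e=0\}$ is globally decreasing. Now write
\[
\P[A\cap \tilde B_\eta\cap C_\eta\cap D_\eta]
=\P[A\cap D_\eta]-\P\big[A\cap(\tilde B_\eta\cap C_\eta)^c\cap D_\eta\big].
\]
The first term equals $\P[A]\,\P[D_\eta]$ by $k$-independence ($A$ supported on $F$, $D_\eta$ on $G^c$). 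The second term is $\le \P[A]\,\P[(\tilde B_\eta\cap C_\eta)^c\cap D_\eta]$ by global FKG, since $(\tilde B_\eta\cap C_\eta)^c\cap D_\eta$ is globally decreasing. Subtracting gives $\P[A\cap \tilde B_\eta\cap C_\eta\cap D_\eta]\ge \P[A]\,\P[\tilde B_\eta\cap C_\eta\cap D_\eta]$, and summing over $\eta$ finishes. This is the missing two-line computation; once you have it, no coupling is needed.
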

\begin{proof}
We prove the equivalence for finite $E$. Once this is established, the infinite case follows by taking suitable limits. 

\noindent  $(i) \implies (ii)$: Fix $F \subseteq E$, a $F$-increasing event $A$ supported on $F$, and a $\cl_k(F)$-increasing event $B$. 
We make use of the observation that as a $\cl_k(F)$-increasing event, $B$ can be partitioned as
\begin{equation}
	B = \bigsqcup_{\eta \in \{0,1\}^{(\cl_k(F))^c}} \left(  B_\eta \cap \{\omega : \omega_e = \eta_e, \forall e \in (\cl_k(F))^c\} \right),
\end{equation} 
where $B_\eta$ are taken to be $\cl_k(F)$-increasing events  supported on $\cl_k(F)$, and thus $E$-increasing. Let us further partition $B$ as
\begin{equation}
	B =  \bigsqcup_{\eta \in \{0,1\}^{(\cl_k(F))^c}} \left(  B_\eta \cap C_\eta \cap D_\eta\right),
\end{equation}
where 
\begin{align}
	&C_\eta := \{\omega: w_e = 1, \forall e \in (\cl_k(F))^c\ \text{with}\ \eta_e = 1 \}\ \text{is $E$-increasing,} \\
	\text{and}\ &D_\eta := \{\omega: w_e = 0, \forall e \in (\cl_k(F))^c\ \text{with}\ \eta_e = 0 \}\ \text{is $E$-decreasing}.
\end{align}
We make the important observation that the event $D_\eta$ is supported on the complement of $\cl_k(F)$ while $A$ is supported on $F$. Therefore, the events $D_\eta$ and $A$ are independent due to the $k$-independence of $\P$. Thus, for every $\eta \in \{0,1\}^{(\cl_k(F))^c}$,
\begin{align}
	\P[A \cap B_\eta \cap C_\eta \cap D_\eta] &= \underbrace{\P[A \cap D_\eta]}_{=\P[A] \cdot \P[D_\eta]} - \underbrace{\P[A \cap (B_\eta \cap C_\eta)^c \cap D_\eta ]}_{\le \P[A] \cdot \P[(B_\eta \cap C_\eta)^c \cap D_\eta ]} \\
	&\ge \P[A] \cdot \left(\P[D_\eta] - \P[(B_\eta \cap C_\eta)^c \cap D_\eta ]\right) \\
	&= \P[A] \cdot \P[B_\eta \cap C_\eta \cap D_\eta ], 
\end{align}
where the inequality follows from the positive association of $\P$ since the event $A$ is $E$-increasing and the event $(B_\eta \cap C_\eta)^c \cap D_\eta$ is $E$-decreasing. Summing over all $\eta \in \{0,1\}^{(\cl_k(F))^c}$ implies $\P[A \cap B] \ge \P[A]\cdot \P[B]$.

We now prove $(ii) \implies (i)$. First, note that positive association directly follows from $(ii)$ by setting $F=E$ since $\cl_k(E)=E$. Second, let $A$ be an $F$-increasing event supported on some $F \subseteq E$ and let $B$ be an event supported on $(\cl_k(F))^c$. Since $B$ is both $\cl_k(F)$-increasing and $\cl_k(F)$-decreasing, $(ii)$ implies 
\begin{equation}
	\P[A \cap B] \ge \P[A]\cdot \P[B] \quad \text{and} \quad 	\P[A \cap B] \le \P[A]\cdot \P[B],
\end{equation}  
thus $A$ and $B$ are independent. It follows analogously that $A$ and $B$ are independent if $A$ is $F$-decreasing instead of $F$-increasing. This concludes the proof since a general event $A$ supported on $F$ can be partitioned into intersections of increasing and decreasing events. 
\end{proof}

We now give a simple example to illustrate how we will make use of this lemma. 
\begin{example}
	Consider a cycle of length $4$, i.e.\ the graph with vertex set $\{1,2,3,4\}$ and edges $\{1,2\}$, $\{2,3\}$, $\{3,4\}$, and $\{4,1\}$. 
    For every positively associated, 1-independent bond percolation model $\P$, Lemma \ref{lem:positive-association-k-independence} implies 
	\begin{equation}
		\P[\omega(\{1,2\})=1 \mid \omega(\{4,1\})=1, \omega(\{2,3\})=1,\omega(\{3,4\})=0] \ge  	\P[\omega(\{1,2\})=1]
	\end{equation}
    since we condition on a $\cl_1(\{1,2\})$-increasing event.
	Informally speaking, the negative information about the edge $\{3,4\}$ cannot propagate through the \emph{open} 1-edge boundary. 
 
    To illustrate the important role of positive association, let $X_1,\ldots,X_4$ be independent, Ber($1/2$)-distributed and define the 1-independent bond percolation model $\P^\ast$ by setting $\omega(\{i,j\})= \mathbf{1}_{X_i=X_j}$. It is easy to see that 
	\begin{equation}
		\P^\ast[\omega(\{1,2\})=1 ] = 1/2 = \P^\ast[\omega(\{1,2\})=1 \mid \omega(\{4,1\})=1, \omega(\{2,3\})=1],
	\end{equation}
	but 
	\begin{equation}
		\P^\ast[\omega(\{1,2\})=1 \mid \omega(\{4,1\})=1, \omega(\{2,3\})=1,\omega(\{3,4\})=0] = 0.
	\end{equation}
	Thus, even though the edge $\{1,2\}$ is open with conditional probability $1/2$ given its open 1-edge boundary, the negative information about the edge $\{3,4\}$ propagates through the \emph{open} 1-edge boundary and forces the edge $\{1,2\}$ to be closed. 
\end{example}

Having Lemma \ref{lem:positive-association-k-independence} at our disposal, we are now ready to prove Theorem \ref{thm:stochastic-domination} by extending the proof of Proposition 3.4 in \cite{Liggett1997} to graphs of bounded degree.

\begin{proof}[Proof of Theorem \ref{thm:stochastic-domination}]
	Let $G$ be a graph with maximum degree $\Delta < \infty$ and $\P$ be a positively associated, $k$-independent bond percolation model with marginals $p \in [0,1]$. We aim to construct an increasing homeomorphism $\rho:[0,1]\to[0,1]$ depending only on $k$ and $\Delta$ such that 
	\begin{equation}\label{eq:1}
		\pi_{\rho(p)} \ll \P. 
	\end{equation}
	This will then also imply $\P \ll \pi_{\sigma(p)}$ with $\sigma(p)=1-\rho(1-p)$. Indeed, by switching the roles of $0$ and $1$, \eqref{eq:1} implies that every positively associated, $k$-independent bond percolation model with marginals $1-q$ is dominated by Bernoulli bond percolation with marginals $1-\rho(q)$. 
	
	To establish \eqref{eq:1}, we follow the proof of Proposition 3.4 in \cite{Liggett1997} closely. By fixing an arbitrary order, we may assume $E=\mathbb N$ for the rest of the proof. Consider a family of random variables $(X_n)_{n \in \mathbb N}$ with  law $\P$ on a general probability space whose measure we denote by $\mathbb P$. 
	Moreover, let $(Y_n)_{n\in \mathbb N}$ be a family of random variables, independent of $(X_n)_{n \in \mathbb N}$, with  law $\pi_{p'}$ for some $p'$ specified below. We define $(Z_n)_{n \in \mathbb N}$ by $Z_n = X_n \cdot Y_n$. 
	It now suffices to prove that for any finite subset $I = \{n_1,\ldots,n_{j+1}\} \subseteq \mathbb N$ with $n_1 < \ldots < n_j < n_{j+1}$ and any choice $\varepsilon_1,\ldots,\varepsilon_j \in \{0,1\}$, we have
	\begin{equation}\label{eq:2}
		\mathbb P[Z_{n_{j+1}} = 1 \mid Z_{n_1} = \varepsilon_1,\ldots,Z_{n_j}=\varepsilon_j] \ge \rho(p)
	\end{equation}
	whenever $\mathbb P[Z_{e_{n_1}} = \varepsilon_1,\ldots,Z_{e_{n_j}}=\varepsilon_j]>0$. According to Lemma 1.1 in \cite{Liggett1997}, \eqref{eq:2} then implies that the law of $(Z_n)_{n \in \mathbb N}$ stochastically dominates $\pi_{\rho(p)}$, and thus, \eqref{eq:1} follows. 
	
	For fixed $I$,  let $K\subseteq I$ be the set of edges in $I$ who are at distance strictly less than $k$ from the edge $n_{j+1}$. Note that $K$ has size at most $2(\Delta-1)^k$. For fixed $\varepsilon_1,\ldots,\varepsilon_j \in \{0,1\}$, denote by $K_0$ the subset of $K$ for which $\varepsilon_i = 0$. First, observe that 
	\begin{equation}
		\mathbb P[Z_{n_{j+1}} = 0 \mid Z_{n_1} = \varepsilon_1,\ldots,Z_{n_j}=\varepsilon_j] = p' \cdot \mathbb P[X_{n_{j+1}} = 0 \mid Z_{n_1} = \varepsilon_1,\ldots,Z_{n_j}=\varepsilon_j] + (1 - p'),
	\end{equation}
	and so \eqref{eq:2} follows once we have shown that 
	\begin{equation}\label{eq:3}
		\mathbb P[X_{n_{j+1}} = 0 \mid Z_{n_1} = \varepsilon_1,\ldots,Z_{n_j}=\varepsilon_j] \le \frac{p' - \rho(p)}{p'}.
	\end{equation}
	To this end, we upper bound the conditional probability in \eqref{eq:3} by 
	\begin{equation}\label{eq:4}
		\frac{\mathbb P[X_{n_{j+1}} = 0;\ Z_{n_i} = \varepsilon_i, \forall n_i \in I \setminus K_0]}{\mathbb P[Z_{n_i} = \varepsilon_i, \forall n_i \in I \setminus K_0;\ Y_{n_i} = 0, \forall n_i \in K_0]} = \frac{\mathbb P[X_{n_{j+1}} = 0 \mid Z_{n_i} = \varepsilon_i, \forall n_i \in I \setminus K_0]}{\mathbb P[Y_{n_i} = 0, \forall n_i \in K_0]},
	\end{equation}
	where we used that $(Y_{n_i})_{n_i\in K_0}$ is independent of $(Z_{n_i})_{n_i \in I \setminus K_0}$. Now, by Lemma \ref{lem:positive-association-k-independence}, 
	\begin{equation}\label{eq:5}
		\mathbb P[X_{n_{j+1}} = 0 \mid Z_{n_i} = \varepsilon_i, \forall n_i \in I \setminus K_0] \le \mathbb P[X_{n_{j+1}} = 0]
	\end{equation}
	since the event $\{Z_{n_i} = \varepsilon_i, \forall n_i \in I \setminus K_0\}$ is $\cl_k(\{n_{j+1}\})$-increasing. Plugging \eqref{eq:5} into \eqref{eq:4} and using the upper bound on the size of $K_0$, we deduce 
	\begin{equation}\label{eq:6}
		\mathbb P[X_{n_{j+1}} = 0 \mid Z_{n_1} = \varepsilon_1,\ldots,Z_{n_j}=\varepsilon_j] \le \frac{1-p}{(1-p')^{2(\Delta-1)^k}}.
	\end{equation}
	Finally, we define $p'$ by $1-p = (1-p')^{2(\Delta-1)^k +1}$ and $\rho$ by $\rho(p) = (p')^2$. These choices establish \eqref{eq:3} and thereby conclude the proof.
\end{proof}

Controlling the propagation of (negative) information while exploring a cluster in a positively associated, 1-independent percolation model will be crucial to establish the upper bounds in Theorems \ref{thm:oriented-d-large}--\ref{thm:oriented-d-2}. Lemma \ref{lem:positive-association-k-independence} can yield some control for specific types of exploration processes. Before illustrating this, we point out that the ideas developed below are specific to 1-independent models and do not seem to generalize naturally to $k$-independent models. 

Let us consider a tree $T=(V,E)$ with root $o \in V$ and  view it as a directed graph by orienting each edge away from the root (we refer to the oriented edge set as $\vv{E}$). Now, consider the level structure $L = (L_i)_{i\ge 0}$, where 
\begin{equation}\label{eq:tree-level-structure}
	L_i := \big\{v \in V : d(o,v)=i\big\}.
\end{equation}
It naturally partitions the edge set $\vv{E}$ by setting
\begin{equation}
	\vv{E}_{i} := \big\{(u,v) \in \vv{E} : u \in L_{i-1}, v \in L_i\big\}
\end{equation}
for $i \ge 1$. 
The cluster of the root, $\mathcal C_o$, can be explored using a breadth-first search: In the first step, one samples the edges in $\vv{E}_1$ to identify all vertices in the first level $L_1$ belonging to $\mathcal C_o$. In the second step, it  suffices to sample the edges in $\vv{E}_2$ that are incident to the vertices in $L_1 \cap \mathcal C_o$ in order to identify all vertices in the second level $L_2$ belonging to $\mathcal C_o$. One then continues like this in all following steps (and eventually stops if $\mathcal C_o$ is finite). 

When studying Bernoulli percolation on $T$ from the perspective of branching processes, one follows this level-by-level exploration and exploits its Markovian structure ($L_{i+1} \cap \mathcal C_o$ depends on the past only via $L_{i} \cap \mathcal C_o$) which is a consequence of `levelwise independence', i.e.\ the fact that the edges in $\vv{E}_{i+1}$ are independent of the previously explored edges in $\vv{E}_1 \cup \ldots \cup \vv{E}_i$. For a positively associated, 1-independent percolation model $\P$, this is generally not the case. However, the next lemma will show that it is possible to compare the exploration of $\mathcal C_o$ under $\P$ with the exploration under its levelwise independent version $\hat{\P}_L$ that is obtained by sampling the edges in $\vv{E}_i$ according to $\P\vert_{\vv{E}_i}$, independently for each $i\ge 1$. More precisely, we will see that  the law of $\mathcal C_o$ under $\P$ stochastically dominates the law of $\mathcal C_o$ under $\hat{\P}_L$. In particular, if $\hat{\P}_L$ percolates, then $\P$ also percolates. 

In the following sections, we need to deal with directed graphs containing undirected cycles such as $\vv{\Z}^n$, and therefore, we choose a more general formulation of the lemma. Given a directed graph $G=(V,\vv{E})$, we call a partition $L = (L_i)_{i\in \Z}$ of the vertex set a \emph{strict level structure}  if $(\vv{E}_{i})_{i\in \Z}$, defined by 
\begin{equation}
	\vv{E}_{i} := \big\{(u,v) \in \vv{E} : u \in L_{i-1}, v \in L_i\big\},
\end{equation}
is a partition of the edge set.\footnote{In a (standard) level structure, any edge $(u,v)\in \vv{E}$ with starting point $u \in L_i$ has its ending point $v$ either in the same level $L_i$ or in the strictly higher level $L_{i+1}$. Here, we call the level structure \emph{strict} since we require the ending point $v$ to be in the strictly higher level $L_{i+1}$.} For example, the sets $L_i := \{x \in \Z^n : x_1 + \ldots + x_n =  i\}$ form a strict level structure of $\vv{\Z}^n$. 
Given a strict level structure $L$ and a bond percolation model $\P$, we define (as before) its \emph{levelwise independent version} $\hat{\P}_L$  by sampling $\omega\vert_{\vv{E}_i}$ according to $\P\vert_{\vv{E}_i}$, independently for each $i\ge 1$.  

We recall that $\vv{\mathcal{C}}_u(\omega)$ denotes the cluster of $u$, that is the set of vertices which are reachable from $u$ by a (directed) $\omega$-open path. For $U \subseteq V$, we introduce the notation 
\begin{equation}
	\vv{\mathcal{C}}_U(\omega) := \bigcup_{u \in U}	\vv{\mathcal{C}}_u(\omega).
\end{equation}

\begin{lemma}\label{lem:levelwise-decoupling}
	Let $G$ be a locally finite directed graph with a strict level structure $L$. Consider a positively associated, 1-independent bond percolation model $\P$ and its levelwise independent version $\hat{\P}_L$. For any finite subset $U \subseteq L_0$, there exists a coupling of $\omega \sim \P$ and $\hat{\omega}_L \sim \hat{\P}_L$ such that almost surely
	\begin{equation}
		\vv{\mathcal{C}}_U(\hat{\omega}_L) \subseteq \vv{\mathcal{C}}_U(\omega) .
	\end{equation}
\end{lemma}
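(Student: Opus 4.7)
The plan is to apply Theorem~\ref{thm:Kamae} to the level-by-level cluster exploration processes $S_i := \vv{\mathcal C}_U(\omega)\cap L_i$ and $\hat S_i := \vv{\mathcal C}_U(\hat\omega_L)\cap L_i$, taking values in finite subsets of $L_i$ ordered by inclusion (well defined since $U$ is finite and $G$ locally finite, so the state spaces are countable). The initial distributions coincide (both assign mass $1$ to $U$), and any coupling with $\hat S_i \subseteq S_i$ for every $i$ automatically yields $\vv{\mathcal C}_U(\hat\omega_L)\subseteq \vv{\mathcal C}_U(\omega)$.

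The task reduces to verifying Kamae's monotonicity: for $\hat s^{i-1}\le s^{i-1}$ (componentwise inclusion), the transition kernel $\hat p_i(\hat s^{i-1},\cdot)$ under $\hat{\P}_L$ is stochastically dominated by $p_i(s^{i-1},\cdot)$ under $\P$. By levelwise independence, $\hat p_i(\hat s^{i-1},\cdot)$ depends only on $\hat s_{i-1}$ and is the law of $\{v\in L_i:\exists u\in \hat s_{i-1},\,(u,v)\text{ open}\}$ when $\omega|_{\vv E_i}\sim \P|_{\vv E_i}$. Monotonicity of this kernel in $\hat s_{i-1}$ is immediate from the monotonicity of the map $(s,\eta)\mapsto\{v:\exists u\in s,(u,v)\in \eta\}$, so the problem reduces to the same-history comparison $\hat p_i(s^{i-1},\cdot)\preceq p_i(s^{i-1},\cdot)$. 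Setting $F_i^+:=\{(u,v)\in \vv E_i: u\in s_{i-1}\}$, both kernels apply the same increasing functional to $\omega|_{F_i^+}$, and the comparison reduces to showing that for every increasing event $C$ supported on $F_i^+$,
\begin{equation*}
    \P[C\mid S^{i-1}=s^{i-1}] \ge \P[C].
\end{equation*}

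The conditioning event decomposes as $\{S^{i-1}=s^{i-1}\}=A\cap B$, where $A$ is the globally increasing positive information \textquotedblleft for each $1\le j\le i-1$ and each $v\in s_j$ there is some $u\in s_{j-1}$ with $(u,v)\in \vv E_j$ open\textquotedblright, supported on $F^+:=\bigcup_j\{(u,v)\in \vv E_j: u\in s_{j-1},\,v\in s_j\}$, and $B$ is the globally decreasing negative information \textquotedblleft for each $1\le j\le i-1$ and each $v\in L_j\setminus s_j$, every edge $(u,v)\in \vv E_j$ with $u\in s_{j-1}$ is closed\textquotedblright, supported on $F^-:=\bigcup_j\{(u,v)\in \vv E_j: u\in s_{j-1},\,v\in L_j\setminus s_j\}$. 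The main obstacle is that $A\cap B$ mixes increasing and decreasing information, so a direct appeal to FKG fails; this is precisely where Lemma~\ref{lem:positive-association-k-independence} is needed.

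The key is the observation that $F^-$ lies at graph distance at least $1$ from $F_i^+$. Indeed, the vertices incident to $F_i^+$ belong to $s_{i-1}\cup L_i$, while those incident to $F^-$ belong to $\bigcup_{j=1}^{i-1}\bigl(s_{j-1}\cup(L_j\setminus s_j)\bigr)$, and these two sets are disjoint: the strict level structure makes distinct $L_j$ disjoint, and on level $i-1$ one has $s_{i-1}\cap(L_{i-1}\setminus s_{i-1})=\emptyset$. Consequently $F^-\subseteq \cl_1(F_i^+)^c$, so $B$ does not depend on any edge in $\cl_1(F_i^+)$ and is trivially $\cl_1(F_i^+)$-increasing, while the globally increasing $A$ is $\cl_1(F_i^+)$-increasing for free. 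Thus $A\cap B$ is $\cl_1(F_i^+)$-increasing, and Lemma~\ref{lem:positive-association-k-independence} applied with $F=F_i^+$ yields $\P[C\cap A\cap B]\ge \P[C]\cdot \P[A\cap B]$, which is exactly the required inequality. Kamae's monotonicity condition is thereby verified, and the coupling it produces satisfies $\hat S_i\subseteq S_i$ almost surely for every $i$.
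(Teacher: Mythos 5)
Your proposal is correct and follows essentially the same route as the paper: reduce to the level-by-level cluster processes, apply the Kamae--Krengel--O'Brien theorem, handle the cross-measure comparison at fixed history via Lemma \ref{lem:positive-association-k-independence} with $F$ the set of out-edges of the current active set, and use levelwise independence for monotonicity of $\hat p_i$ in the history. The only (harmless) difference is presentational: where the paper asserts directly that $\{\lambda^{n-1}=\kappa^{n-1}\}$ is $\cl_1(\vv F_n)$-increasing, you verify it by splitting the conditioning event into a globally increasing part and a part supported off $\cl_1(F_i^+)$, which is a valid and slightly more explicit justification of the same fact.
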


\begin{proof} 
	Without loss of generality we can assume that $U = L_0$, $L_i$ is finite for $i\ge 0$, and $L_i = \emptyset$ for $i<0$. Indeed, $\vv{\mathcal{C}}_U$ only depends on the edges between vertices that are reachable from $U$.
	The strict level structure $L$ and the set $U=L_0$ being fixed, we continue to write $\hat{\P}$ for $\hat{\P}_L$ and $\vv{\mathcal{C}}$ for $\vv{\mathcal{C}}_{L_0}$. 
	We naturally identify $\vv{\mathcal{C}} \subseteq V$ with a configuration $\lambda \in \{0,1\}^V$ by setting $\lambda(v)=1$ whenever $v \in \vv{\mathcal{C}}$, and write $\mathrm Q$ and $\hat{\mathrm Q}$ for the push-forward measures of $\P$ respectively $\hat{\P}$ under $\vv{\mathcal{C}} : \{0,1\}^{\vv{E}} \to \{0,1\}^V$.
	For simplicity of notation, let us write $L^i := L_0 \cup \ldots \cup L_i$ and $\vv{E}^i := \vv{E}_{1} \cup \ldots \cup \vv{E}_{i}$, and denote the restriction of $\lambda \in \{0,1\}^V$ to $L_i$ (resp. $L^i$) by $\lambda_i$ (resp. $\lambda^i$). By definition, $\lambda^i$ only depends on the edges in $\vv{E}^i$.
	Our goal is to construct a coupling of $\lambda \sim \mathrm Q$ and $\hat{\lambda} \sim \hat{\mathrm{Q}}$ such that almost surely 
	\begin{equation}\label{eq:lemma-levelwise-decoupling-1}
		\hat{\lambda} \le \lambda.
	\end{equation} 
	From there, the lemma directly follows by sampling $\hat{\omega}$ conditional on $\hat{\lambda}$ and $\omega$ conditional on $\lambda$. To establish \eqref{eq:lemma-levelwise-decoupling-1}, we apply the result of Kamae, Krengel and O'Brien \cite{Kamae1977} which we stated as Theorem \ref{thm:Kamae} in Section \ref{sec:background-and-notation}.

	 For $n\ge 1$, we define stochastic kernels $p_n$, $\hat{p}_n$ on $\{0,1\}^{L^{n-1}} \times \{0,1\}^{L_n}$ by 
	\begin{equation}
		p_n(\kappa^{n-1},A) = \mathrm Q[A\mid \lambda^{n-1} = \kappa^{n-1} ] \quad \text{and} \quad \hat{p}_n(\kappa^{n-1},A) = \hat{\mathrm Q}[A\mid \hat{\lambda}^{n-1} = \kappa^{n-1} ]
	\end{equation}
	for a configuration $\kappa^{n-1} \in \{0,1\}^{L^{n-1}}$ and an event $A \subseteq \{0,1\}^{L_n}$. The main step is to show that for every $n\ge 1$ and  every $\kappa^{n-1}$,
	\begin{equation}\label{eq:lemma-levelwise-decoupling-2}
		  p_n(\kappa^{n-1},\boldsymbol{\cdot}) \succeq \hat{p}_n(\kappa^{n-1},\boldsymbol{\cdot}) .
	\end{equation}
	The two kernels being equal for $n=1$, we fix any $n \ge 2$ and any $\kappa^{n-1}$. Denote by $\vv{F}_n \subseteq \vv{E}_n$ the subset of edges $(u,v)$ with $\kappa^{n-1}(u)=1$ and by $K_n \subseteq L_n$ the subset of vertices $v$ with an edge $(u,v) \in \vv{F}_n$. 
	The event $\{\lambda^{n-1} = \kappa^{n-1}\}$, viewed as an event on $\{0,1\}^{\vv{E}^{n-1}}$, is $\cl_1(\vv{F}_n)$-increasing since opening any edge $(u,v) \in \vv{E}_{n-1}$ with $\kappa^{n-1}(v)=1$ favours its occurence. Thus, by Lemma \ref{lem:positive-association-k-independence}, for every $(\vv{F}_n)$-increasing event $A$ supported on $\vv{F}_n$, 
	\begin{equation}
		\P[A\mid \lambda^{n-1} = \kappa^{n-1} ] \ge  \P[A] = \hat{\P}[A]  =\hat{\P}[A\mid \lambda^{n-1} = \kappa^{n-1}],
	\end{equation} 
	where we used in the first equality that $A$ is supported on $\vv{F}_n\subseteq \vv{E}_n$, and in the second equality that the edges in $\vv{E}_n$ are independent of the edges in $\vv{E}^{n-1}$ under $\hat{\P}$. It is straightforward to verify that this implies for every $(K_n)$-increasing event $A$ supported on $K_n$, 
	\begin{equation}
		\mathrm Q[A\mid \lambda^{n-1} = \kappa^{n-1}] \ge \hat{\mathrm Q}[A\mid \hat{\lambda}^{n-1} = \kappa^{n-1} ].
	\end{equation}
	In fact, this statement extends to every ($L_n$-)increasing event supported on $L_n$ since $\lambda^{n-1} = \kappa^{n-1}$ forces $\lambda_n(v) = 0$ for every $v \in L_n \setminus K_n$. We have thus established \eqref{eq:lemma-levelwise-decoupling-2}. 
	
	Finally, we observe that 
	$\hat{p}_n(\iota^{n-1},\boldsymbol{\cdot}) \preceq \hat{p}_n(\kappa^{n-1},\boldsymbol{\cdot})$
	holds for every $\iota^{n-1} \le \kappa^{n-1}$ since the edges in $\vv{E}_n$ are sampled independently of the edges in $\vv{E}^{n-1}$ under $\hat{\P}$. The existence of the desired coupling \eqref{eq:lemma-levelwise-decoupling-1} now follows directly from Theorem \ref{thm:Kamae} (with $\P_0 = \mathrm{Q}_0 = \delta_U$).
\end{proof}

Based on the previous lemma, we obtain an alternative proof of Theorem \ref{thm:trees} for the 1-independent case. To this end, consider an infinite, locally finite tree $T$ with the natural strict level structure $L$ given by \eqref{eq:tree-level-structure}. According to Lemma \ref{lem:levelwise-decoupling}, it suffices to prove that any levelwise independent bond percolation model $\hat{\P}_L \in \mathcal P_a(T,p)$ with marginals $p>\frac{1}{\br(T)}$ percolates. The reduction to levelwise independent models facilitates the computation of the first and
second moment. One then concludes as in Section \ref{sec:trees} by closely following the proof of Theorem 6.2 in \cite{Lyons1990}. 

\section{Two-dimensional Percolation}
\label{sec:two-dimensional-percolation}
\subsection{Lower Bound of Theorem \ref{thm:unoriented-d-2}}

In Section \ref{sec:trees}, we studied percolation on trees and established that every positively associated, finite-range dependent percolation model necessarily percolates if its marginals are strictly larger than those of critical Bernoulli percolation. In the 1-independent case, this implies $p_a^+(T) = p_c(T)$.  We begin this section by proving that this equality can be false on graphs containing cycles. More precisely, we construct a positively associated, 1-independent bond percolation model on $\Z^2$ with marginals strictly larger than $\frac{1}{2}$ that does not percolate, and we thus establish  the lower bound $p^+_a(\Z^2)>p_c(\Z^2)=\frac{1}{2}$ of Theorem \ref{thm:unoriented-d-2}. 

A crucial input is the following lower bound for the critical probability of Bernoulli site percolation on the truncated square lattice $\mathbb{T}$ illustrated in Figure \ref{fig:TruncatedSquare1}. Approximations of $p_c^{\mathrm{site}}(\mathbb{T})$ in \cite{jacobsen2014high} suggest that the actual value is about $0.7297$.

\begin{figure}
  \centering
  \subfloat[The black lattice is the truncated square lattice $\mathbb{T}$. By contracting diagonal edges, we obtain the gray copy of $\mathbb{Z}^2$.]{\includegraphics[width=0.3\textwidth]{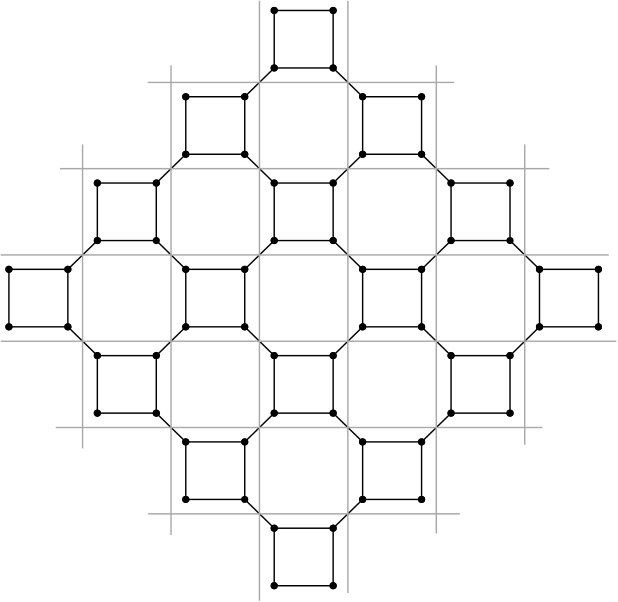}\label{fig:TruncatedSquare1}}
  \hfill
  \subfloat[A bond percolation configuration induced by a site percolation configuration. Open sites resp.\ bonds in blue.]{\includegraphics[width=0.3\textwidth]{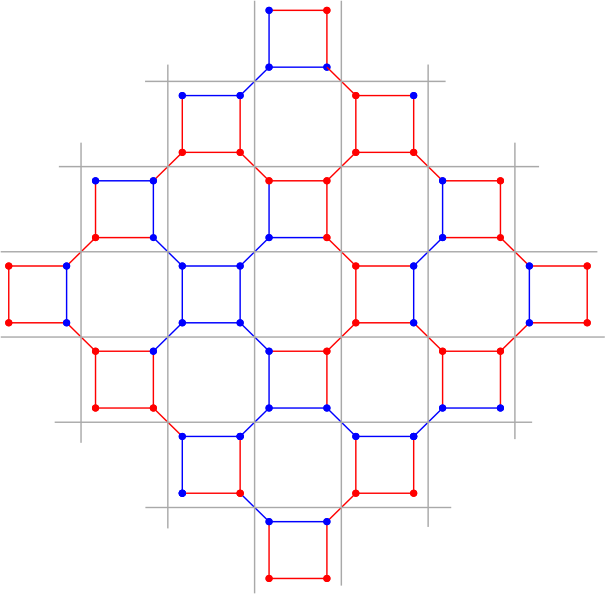}\label{fig:TruncatedSquare2}}
  \hfill
  \subfloat[A bond percolation configuration on $\Z^2$ obtained by contraction.]{\includegraphics[width=0.3\textwidth]{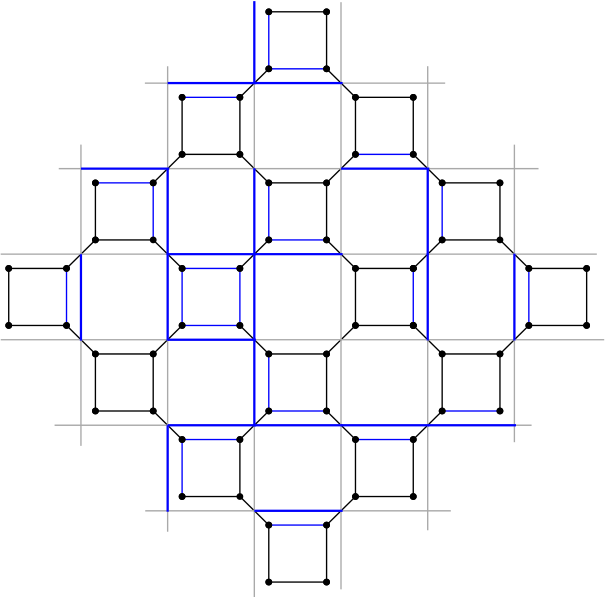}\label{fig:TruncatedSquare3}}
  \caption{Definition of $Q_{p^2} \in \mathcal{P}(\mathbb{Z}^2,p^2)$.}
\end{figure}

\begin{lemma}\label{SitePercolationTruncatedSquareLattice}
    \[p_c^{\mathrm{site}}(\mathbb{T}) > \frac{1}{\sqrt{2}}\]
\end{lemma}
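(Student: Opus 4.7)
The plan is to exhibit a coupling of Bernoulli site percolation on $\mathbb{T}$ at parameter $p$ with Bernoulli bond percolation on $\mathbb{Z}^2$ at parameter $p^2$, and then invoke Harris's theorem that $\pi_{1/2}$ does not percolate on $\mathbb{Z}^2$. From this, the non-strict bound $p_c^{\mathrm{site}}(\mathbb{T}) \ge 1/\sqrt{2}$ drops out, and a refinement exploiting the slack in the coupling will upgrade it to the strict inequality.

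First I would unpack the contraction illustrated in Figures~\ref{fig:TruncatedSquare1}--\ref{fig:TruncatedSquare3}: collapsing each diagonal block of $\mathbb{T}$ (a $4$-cycle of the truncated square lattice) to a single vertex produces a copy of $\mathbb{Z}^2$ in which every $\mathbb{Z}^2$-edge is the image of a unique non-diagonal $\mathbb{T}$-edge. The decisive combinatorial feature is that each $\mathbb{T}$-vertex lies on exactly one non-diagonal edge, so distinct $\mathbb{Z}^2$-edges involve disjoint pairs of $\mathbb{T}$-vertices.

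Next, sampling $\omega$ from $\pi_p^{\mathrm{site}}$ on $\mathbb{T}$, I would define $Q_{p^2} \in \{0,1\}^{E(\mathbb{Z}^2)}$ by declaring each $\mathbb{Z}^2$-edge open iff the two endpoints of the corresponding non-diagonal $\mathbb{T}$-edge are both open in $\omega$. By the disjointness property, distinct $\mathbb{Z}^2$-edges are independent under $Q_{p^2}$ with marginals $p^2$, so $Q_{p^2}$ has exactly the law of $\pi_{p^2}$ on $\mathbb{Z}^2$. If $\omega$ contains an infinite cluster, then any infinite open self-avoiding site-path in $\mathbb{T}$ must cross infinitely many non-diagonal edges (since each diagonal block has bounded size), and the images of these edges form an infinite open path in $Q_{p^2}$. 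Thus percolation of $\omega$ forces percolation of $Q_{p^2}$. At $p = 1/\sqrt{2}$, Harris's theorem rules out percolation of $\pi_{1/2}$ on $\mathbb{Z}^2$, so $p_c^{\mathrm{site}}(\mathbb{T}) \ge 1/\sqrt{2}$.

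The main obstacle is upgrading this to the strict inequality. My plan is to exploit the lossiness of the coupling: at each diagonal block visited by an infinite $\mathbb{T}$-site path, the two transit vertices used by the incoming and outgoing non-diagonal edges must be joined by an open path through the block. When the two transit vertices are non-adjacent in the $4$-cycle (the ``straight-through'' case), this bridge requires at least one of the other two block-vertices to be open, an event of conditional probability $1-(1-p)^2 < 1$. Accumulating this per-block multiplicative deficit with the polynomial decay of the two-point function of $\pi_{1/2}$ on $\mathbb{Z}^2$ yields exponential decay of $\mathbb{T}$-site connection probabilities at $p = 1/\sqrt{2}$, and sharpness of the site-percolation phase transition on $\mathbb{T}$ then forces $p_c^{\mathrm{site}}(\mathbb{T}) > 1/\sqrt{2}$.
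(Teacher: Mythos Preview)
Your coupling argument for the non-strict bound $p_c^{\mathrm{site}}(\mathbb{T}) \ge 1/\sqrt{2}$ is correct and is essentially the paper's argument in different clothing: the paper phrases it as the graph inclusion $\mathbb{T} \subset \mathbb{L}$ (where $\mathbb{L}$ is the line graph of $\text{Subdiv}(\mathbb{Z}^2)$, obtained from $\mathbb{T}$ by adding both diagonals to each square) together with $p_c^{\mathrm{site}}(\mathbb{L}) = \sqrt{p_c(\mathbb{Z}^2)}$, which is the same comparison you set up via $Q_{p^2}$.

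The gap is in your upgrade to the strict inequality. The ``per-block multiplicative deficit'' you describe applies only at blocks where the projected $\mathbb{Z}^2$-path goes straight through, since then the entry and exit vertices in the $4$-cycle are opposite and need a third open vertex to be joined. At a block where the $\mathbb{Z}^2$-path turns, the entry and exit vertices are adjacent in the $4$-cycle and no extra vertex is required. But a self-avoiding path in $\mathbb{Z}^2$ can turn at every step --- a staircase path, for instance --- so there are arbitrarily long $\mathbb{Z}^2$-paths that incur no deficit at all. Consequently you cannot extract an exponential factor from the deficits, and combining them with the merely polynomial decay of the critical $\pi_{1/2}$ two-point function does not yield exponential decay of $\mathbb{T}$-connection probabilities at $p=1/\sqrt{2}$. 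More structurally, the event $\{0 \leftrightarrow_{\mathbb{T}} \partial B_n\}$ restricted to open staircase configurations coincides exactly with the corresponding $\mathbb{Z}^2$-event, so no pathwise accounting can separate the two models.

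The paper handles the strict inequality by an entirely different mechanism: it interpolates between $\mathbb{T}$ and $\mathbb{L}$ with a two-parameter family $\pi^{\mathrm{site}}_{p,s}$ on an auxiliary graph $\widetilde{\mathbb{T}}$ (extra ``centre'' vertices open with probability~$s$), and then invokes the essential-enhancement technique of Aizenman--Grimmett (as carried out in \cite[Theorem~3.7]{grimmett1999percolation}) to show that turning on the diagonals strictly lowers the critical parameter. That machinery is designed precisely to convert a local, non-redundant enhancement into a strict inequality of critical points, which is what your deficit heuristic is reaching for but cannot deliver on its own.
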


The proof of Lemma \ref{SitePercolationTruncatedSquareLattice} follows closely the strategy presented in \cite[Theorem 3.7]{grimmett1999percolation}, where it is proven that the critical probability of Bernoulli bond percolation on the triangular lattice is strictly smaller than $p_c(\mathbb{Z}^2)$. This strategy is based on a general technique, known as essential enhancements, due to Aizenman and Grimmett \cite{aizenman1991strict}. However,  it was pointed out in  \cite{balister2014essential} that a key combinatorial lemma of the argument in \cite{aizenman1991strict} has a mistake that is not easily rectifiable in full generality. For this reason, we decided to provide some details of the argument in our specific setting, and then refer to\cite[Theorem 3.7]{grimmett1999percolation}.

\begin{proof}[Proof of Lemma \ref{SitePercolationTruncatedSquareLattice}]
   We begin with an observation about Bernoulli bond percolation $\pi_p$ on $\mathbb{Z}^2$. Consider the graph $\text{Subdiv}(\mathbb{Z}^2)$ that is obtained by adding a vertex in the middle of each edge of $\Z^2$ and thereby subdividing each edge into two edges. Clearly, $p_c(\mathbb{Z}^2)$ is equal to $p_c(\text{Subdiv}(\mathbb{Z}^2))^2$. Applying a bond-to-site transformation to $\text{Subdiv}(\mathbb{Z}^2)$, we obtain its line graph, denoted by $\mathbb{L}$, and we obtain  $p_c(\text{Subdiv}(\mathbb{Z}^2)) = p^{\mathrm{site}}_c(\mathbb{L})$. 
   We make the geometric observation that $\mathbb{L}$ can be obtained from $\mathbb{T}$ by adding the two diagonal edges to each square in $\mathbb{T}$. It is thus clear that $p^{\mathrm{site}}_c(\mathbb{T}) \ge p^{\mathrm{site}}_c(\mathbb{L}) = \sqrt{p_c(\Z^2)} = \frac{1}{\sqrt{2}}$, and we are left with proving that the inequality is strict.
   
   As in \cite[Section 3.1]{grimmett1999percolation}, we introduce a two-parameter family of site percolation models to interpolate between $\mathbb{T}$ and $\mathbb{L}$. 
   Denote by $\widetilde{\mathbb{T}}$ the graph obtained by adding in the center of each square in $\mathbb{T}$ a vertex and connecting it to all four vertices of the square. Declare each vertex in $\mathbb{T}$ to be open with probability $p \in [0,1]$ and each \emph{additional} vertex in $\widetilde{\mathbb{T}}$ to be open with probability $s\in[0,1]$, independently of each other, and write $\pi^{\mathrm{site}}_{p,s}$ for the  resulting model. It is straightforward to see that $\pi^{\mathrm{site}}_{p,0}$ (resp.\ $\pi^{\mathrm{site}}_{p,1}$) corresponds to Bernoulli site percolation $\pi^{\mathrm{site}}_p$ on $\mathbb{T}$ (resp.\ on $\mathbb{L}$). From here, the strict inequality can be established analogously to \cite[Theorem 3.7]{grimmett1999percolation}.
    \end{proof}
    
With these preparations at hand, we may turn to the lower bound for $p_a^+(\mathbb{Z}^2)$.

\begin{proof}[Proof of the lower bound of Theorem \ref{thm:unoriented-d-2}]
    Let $\sigma$ be a site percolation configuration on $\mathbb{T}$. First, $\sigma$ naturally induces a bond percolation configuration $\eta$ on $\mathbb T$ by declaring an edge $\eta$-open if and only if both its endpoints are $\sigma$-open.
    Second, we make the geometric observation that contracting the diagonal edges of $\mathbb{T}$ yields the lattice $\mathbb{Z}^2$ (see Figure \ref{fig:TruncatedSquare1}). Through this contraction, $\eta$ induces a bond percolation configuration $\omega$ on $\Z^2$.

     We now argue that there exists an infinite $\sigma$-open path in $\mathbb T$ if and only if there exists an infinite $\omega$-open path in $\Z^2$.
     On the one hand, an infinite $\sigma$-open path in $\mathbb T$ is also $\eta$-open. The contraction then simply removes all diagonal edges and thus yields an infinite $\omega$-open path in $\Z^2$. On the other hand, given a path $\gamma$ in $\Z^2$, we consider a path $\gamma'$ that is mapped to $\gamma$ under the contraction $\mathbb T \to \Z^2$. Note that the choice of $\gamma'$ is unique up to adding or removing one vertex at the start and at the end of $\gamma'$.   Clearly, $\gamma'$ is infinite if $\gamma$ is infinite, and by construction of $\omega$ from $\sigma$, it follows that  $\gamma'$ must be $\sigma$-open if $\gamma$ is $\omega$-open.  

     Finally, it is straightforward to verify that the law of $\omega$ belongs to $\mathcal{P}_a(\Z^2, p^2)$ if $\sigma$ is sampled according to Bernoulli site percolation $\pi_p^{\mathrm{site}}$ on $\mathbb T$. This implies 
     \begin{equation}
         p_a^+(\Z^2) \ge \left(p_c^{\mathrm{site}}(\mathbb T)\right)^2,
     \end{equation}
    and so by Lemma \ref{SitePercolationTruncatedSquareLattice}, we conclude that $p_a^+(\Z^2) > \frac{1}{2}$.
\end{proof}

We remark that the previously mentioned result in \cite{jacobsen2014high} suggests
$p^+_a(\mathbb{Z}^2) \ge 0.5324$.

\subsection{Stochastic Domination for Oriented Percolation Clusters}\label{sec:StochasticDominationOrientedClusters}

In this subsection, we study oriented percolation on $\vv{\Z}^2$ with  strict level structure $L$ given by the diagonals $L_i := \{(x,y) \in \Z^2 : x+y=i \}$. Lemma \ref{lem:levelwise-decoupling} established that for any positively associated, 1-independent bond percolation model $\P$, the cluster of the origin $\vv{\mathcal C}_0(\omega)$ stochastically dominates $\vv{\mathcal C}_0(\hat{\omega}_L)$, where $\omega$ is sampled according to $\P$ and $\hat{\omega}_L$ is sampled according to its levelwise-independent version $\hat{P}_L$. Therefore, it suffices to consider levelwise-independent bond percolation models when studying the parameter  $p_a^+(\vv{\Z}^2)$. This is useful since $(\vv{\mathcal C}_0(\hat{\omega}_L) \cap L_i)_{i\ge 0}$ is a Markov chain with initial state $\{0\}$ whose transitions from a subset of $L_{i-1}$ to a subset of $L_{i}$ are determined by some $\P_{i} \in \mathcal P_a(\vv{E}_i,p)$, and the problem is thus reduced to the study of positively associated, 1-independent bond percolation models on $\vv{E}_1$. However, this is still a large class of models. To make headway, we introduce a new Markov chain $(\mathcal X_i)_{i\ge 0}$ with explicit transition probability, and compare it with $(\vv{\mathcal C}_0(\hat{\omega}_L) \cap L_i)_{i\ge 0}$  in Lemma \ref{lem:markov-chain-lemma-1}.

Let $G=(V,\vv{E})$ be a subgraph of $\vv{\Z}^2$ with  strict level structure $L$ given by $L_i := \{(x,y) \in V : x+y=i \}$, and let $p \in [0,1]$. On the countable state space 
\begin{equation}\label{eq:state-space-X}
	\mathcal S := \bigcup_{i\ge 0} \{W \subseteq L_i : \abs{W}<\infty\},
\end{equation}
we define the transition probability $p_X = (p_X(W,W'))_{W,W' \in \mathcal S}$ as follows: First, we set $p_X(W,W') = 0$ if $W' \nsubseteq N_+(W)$. Now, assume that $W' \subseteq N_+(W)$. In the special case that $W$ is an interval of length $\ell\ge 1$, that is, a subset of the form $\{(x+j,y-j):1 \le j \le \ell\}$, we define
\begin{equation}
	p_X(W,W') = \begin{cases}
		(1-p)^\ell &\text{if}\  \abs{N_+(W)} = \ell + 1,\ W' = \emptyset,\\
		0 &\text{if}\  \abs{N_+(W)} = \ell + 1,\ W' = \{(x+\ell+1,y-\ell)\},\\
		p^{\abs{W'}} (1-p)^{\abs{N_+(W)\setminus W'}}  &\text{otherwise}.
	\end{cases}
\end{equation}
For the general case, we partition $W$ into maximal intervals $W_1,\ldots,W_m$. Noting that the sets $N_+(W_1),\ldots,N_+(W_m)$ are disjoint, we can then partition $W'$ into $W_1',\ldots,W_m'$ defined by $W_k' := N_+(W_k) \cap W'$. We then define
\begin{equation}\label{eq:markov-chain-product-form}
	p_X(W,W') = \prod_{k=1}^{m} p_X(W_k,W_k'),
\end{equation}
which includes the case $p_X(\emptyset,\emptyset)=1$. Throughout the rest of this section, we write $(\mathcal X_i)_{i\ge 0}$ for a Markov chain on $\mathcal S$ with transition probability $p_X$. The following remark will be useful in Section \ref{subsec:upper-bound-unoriented-2-d}.

\begin{remark}\label{rem:markov-chain-increasing-function}
    $(\mathcal X_i)_{i\ge 0}$ can be constructed as an increasing function of a family of independent Ber($p$)-distributed random variables $(Z(v))_{v\in V}$. Indeed, handling separately each interval $W$ of length $\ell \ge 1$, set $W' = \emptyset$ if $\abs{N_+(W)} = \ell + 1$ and $Z(v)=0$ for every $v \in N_+(W) \setminus \{(x+\ell+1,y-\ell)\}$, and set $W' = \{ v \in N_+(W): Z(v)=1\}$ otherwise.
\end{remark}

\begin{lemma}\label{lem:markov-chain-lemma-1}
	Let $G=(V,\vv{E})$ be a subgraph of $\vv{\Z}^2$ with  strict level structure $L$ given by $L_i := \{(x,y) \in V : x+y=i \}$. Consider the levelwise independent version $\hat{\P}_L$ of some $\P \in \mathcal{P}_a(G,p)$. For any finite subset $U \subseteq L_0$, there exists a coupling of  $\hat{\omega}_L \sim \hat{\P}_L$ and $(\mathcal X_i)_{i\ge 0}$ with initial state $U$ and transition probability $p_X$ such that almost surely
	\begin{equation}
		\bigcup_{i\ge 0} \mathcal X_i \subseteq \vv{\mathcal{C}}_U(\hat{\omega}_L) .
	\end{equation}
\end{lemma}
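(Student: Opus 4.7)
The plan is to apply Theorem~\ref{thm:Kamae} in its Markovian formulation. Under $\hat{\P}_L$, the edges of $\vv{E}_{i+1}$ are independent of those in earlier levels, so $C_i := \vv{\mathcal C}_U(\hat\omega_L) \cap L_i$ evolves as a Markov chain on $\mathcal S$; denote its transition kernel by $q$. Both $(\mathcal X_i)$ and $(C_i)$ start at $U$, so it suffices to verify
\begin{equation*}
    p_X(\mathcal X, \boldsymbol\cdot) \preceq q(C, \boldsymbol\cdot) \qquad \text{for every } \mathcal X \subseteq C \text{ in } \mathcal S.
\end{equation*}
The monotonicity $q(\mathcal X, \cdot) \preceq q(C, \cdot)$ is immediate, since $q(C, \cdot)$ is the push-forward of $\P|_{\vv{E}_{i+1}}$ under a map that is coordinate-wise increasing in $C$, so the remaining task is to establish $p_X(\mathcal X, \cdot) \preceq q(\mathcal X, \cdot)$.

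To reduce this to a single-interval statement, I would decompose $\mathcal X$ into its maximal intervals $W_1, \ldots, W_m$. Distinct maximal intervals are separated by at least one vertex of $L_i \setminus \mathcal X$, and a brief distance calculation in $\vv{\Z}^2$ shows that any two edges from distinct $\vv{E}(W_k, N_+(W_k))$ are at graph distance at least $1$; by 1-independence these edge families are therefore independent. Since the forward neighborhoods $N_+(W_k)$ are pairwise disjoint and $p_X(\mathcal X, \cdot)$ is by~\eqref{eq:markov-chain-product-form} already a product over intervals, the problem reduces to: for each interval $W$,
\begin{equation*}
    p_X(W, \boldsymbol\cdot) \preceq \mathcal{L}(W_C), \qquad W_C := \{v \in N_+(W) : \exists\, u \in W,\ \omega((u,v)) = 1\},
\end{equation*}
where $\omega \sim \P|_{\vv{E}(W, N_+(W))}$.

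For the single-interval claim I would apply Strassen's theorem and verify stochastic domination on every increasing event $A \subseteq 2^{N_+(W)}$. The key structural input is that the $\ell$ up-edges $X_j = (u_j, v_{j-1})$, $j = 1, \ldots, \ell$, are pairwise at graph distance at least $1$, hence jointly independent by 1-independence, each with marginal at least $p$. Thinning them to exact $\mathrm{Ber}(p)$ marginals shows that $(\mathbf 1_{v_j \in W_C})_{0 \le j < \ell}$ stochastically dominates an iid $\mathrm{Ber}(p)$ family, which, combined with the identity
\begin{equation*}
    p_X(W, \boldsymbol\cdot) = \mathrm{Ber}(p)^{\otimes(\ell+1)} + p(1-p)^\ell \bigl(\delta_\emptyset - \delta_{\{v_\ell\}}\bigr)
\end{equation*}
valid in the generic case $|N_+(W)| = \ell + 1$, readily handles every increasing $A$ with $\{v_\ell\} \notin A$ (the boundary case $|N_+(W)| \ne \ell + 1$ is analogous but without the $v_\ell$ correction). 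The main obstacle, I expect, will be increasing events $A$ with $\{v_\ell\} \in A$: the unique in-edge $Y_\ell = (u_\ell, v_\ell)$ witnessing $v_\ell$ shares $u_\ell$ with $X_\ell$ and is therefore correlated with it, which precludes a fully product coupling. The defect $-p(1-p)^\ell \delta_{\{v_\ell\}}$ built into $p_X$ appears calibrated precisely to absorb this; the proof should close by splitting $A = A_0 \sqcup A_1$ according to whether $v_\ell \in W$, applying positive association of $\{\omega(Y_\ell) = 1\}$ against the increasing event describing $W_C \cap \{v_0, \ldots, v_{\ell-1}\}$, and exploiting the joint-independence bound $\mathrm P[\text{all up-edges closed}] \le (1-p)^\ell$.
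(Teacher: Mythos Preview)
Your reduction to a single-interval comparison is correct and matches the paper exactly: the Markov structure of $(\vv{\mathcal C}_U(\hat\omega_L)\cap L_i)_{i\ge 0}$, the monotonicity $q(\mathcal X,\cdot)\preceq q(C,\cdot)$, the product decomposition over maximal intervals via 1-independence, and the appeal to Theorem~\ref{thm:Kamae} are all exactly as in the paper's proof.

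The gap is in the single-interval step, where your case split is miscalibrated. You claim that up-edge domination ``readily handles every increasing $A$ with $\{v_\ell\}\notin A$'', but such an $A$ may still depend on $v_\ell$: for instance $A=\{W':v_0\in W'\ \text{and}\ v_\ell\in W'\}$ satisfies $\{v_\ell\}\notin A$. For this $A$ the identity gives $p_X(W,A)=\mathrm{Ber}(p)^{\otimes(\ell+1)}(A)=p^2$, and you now need $\mathrm{P}[v_0,v_\ell\in W_C]\ge p^2$. The up-edges $X_1,\dots,X_\ell$ only witness $v_0,\dots,v_{\ell-1}$; the sole edge into $v_\ell$ is $Y_\ell=(u_\ell,v_\ell)$, which shares $u_\ell$ with $X_\ell$. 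So the family $(X_1,\dots,X_\ell,Y_\ell)$ is \emph{not} vertex-disjoint, and there is no matching of $\ell+1$ pairwise vertex-disjoint edges onto $v_0,\dots,v_\ell$ (Hall's condition fails since $|N_+(W)|=\ell+1>\ell=|W|$). Your proposal for the case $\{v_\ell\}\in A$ invokes positive association to control this correlation, but the same correlation already obstructs the ``easy'' case, and a product-measure lower bound on $\{\text{at least one of }X_\ell,Y_\ell\text{ open}\}$ even goes the wrong way by FKG. So neither case is dispatched by the ingredients you list, and the defect term is not what separates them.

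The paper sidesteps the event-by-event verification entirely by building an explicit monotone coupling. Labelling $e_{j,j}=(u_j,v_j)$ and $e_{j,j+1}=(u_j,v_{j+1})$, one sets
\[
j^\star:=\min\{j:\hat\omega(e_{j,j})=1\}\in\{1,\dots,\ell\}\cup\{\infty\},
\]
declares $v_j\in W_X'$ via $\hat\omega(e_{j,j})$ for $j\le j^\star$, then switches to the down-edges $\hat\omega(e_{j-1,j})$ for $j>j^\star$, with the single transitional edge $e_{j^\star,j^\star+1}$ thinned by an auxiliary Bernoulli to restore the exact Ber$(p)$ marginal. The edges used after the switch are vertex-disjoint from those determining $j^\star$, so 1-independence gives the required product law, and one checks directly that $W_X'\sim p_X(W,\cdot)$ and $W_X'\subseteq\hat W'$ pointwise. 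This construction is what actually absorbs the defect $p(1-p)^\ell(\delta_\emptyset-\delta_{\{v_{\ell+1}\}})$: the event $j^\star=\infty$ forces $W_X'=\emptyset$ with probability $(1-p)^\ell$ rather than $(1-p)^{\ell+1}$, and $W_X'=\{v_{\ell+1}\}$ is impossible since $j^\star<\infty$ would put $v_{j^\star}\in W_X'$.
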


\begin{proof}
	The strict level structure $L$ being fixed, we write $\hat{\P}$ for $\hat{\P}_L$ and $\hat{\omega}$ for $\hat{\omega}_L$. The levelwise independence of $\hat{\P}$ implies that $(\vv{\mathcal C}_U(\hat{\omega}) \cap L_i)_{i\ge 0}$ is a Markov chain on $\mathcal S$ with initial state $U$. We denote its transition probability by $\hat{p}$. We first observe that $\hat{p}(W,W')>0$ only if $W' \subseteq N_+(W)$. Moreover, 1-independence of $\hat{\P}_L$ implies $\hat{p}(W,W') = \prod_{k=1}^{m} \hat{p}(W_k,W_k')$, where $W_1,\ldots,W_k$ are maximal intervals and $W_1',\ldots,W_k'$ are defined as in \eqref{eq:markov-chain-product-form}. Now, assume that $W$ is an interval of length $\ell \ge 1$. The main step of the proof is to show that 
	\begin{equation}\label{eq:lemma-3-main-step}
		p_X(W,\boldsymbol{\cdot})\preceq \hat{p}(W,\boldsymbol{\cdot}).
	\end{equation}
	
	We proceed by constructing a monotone coupling of $W'_X \sim p_X(W,\boldsymbol{\cdot})$ and $\hat{W}' \sim \hat{p}(W,\boldsymbol{\cdot})$. Let $\vv{F}$ denote the edges from $W$ to $N_+(W)$. First, we consider the case $\abs{N_+(W)} \le \abs{W}$. It is then possible to pick for each vertex $v \in N_+(W)$ a vertex $u_v \in W$ in such a way that $(u_v,v) \in \vv{F}$ and no vertex in $W$ is picked twice. Under $\hat{\P}$, the random variables $(\hat{\omega}((u_v,v)))_{v \in N_+(W)}$ are thus independent, and we obtain a monotone coupling by defining $W_X' := \{v \in N_+(W) : \hat{\omega}((u_v,v))=1\}$ and $\hat{W}':= \{v \in N_+(W) : \exists u\in W\ \text{s.t.}\ \hat{\omega}((u,v))=1\}$.
	
	Second, we consider the case $\abs{N_+(W)} = \abs{W}+1$. By increasing $x$-coordinate, we label the vertices in $W$ as $u_1,\ldots,u_\ell$ and the vertices in $N_+(W)$ as $v_1,\ldots,v_{\ell+1}$, and we write $e_{j,k}:=(u_j,v_k)$ with $1 \le j \le \ell$ and $k\in \{j,j+1\}$ for the edges in $\vv{F}$. We sample $\hat{\omega}\sim \hat{\P}$ on $\vv{F}$  and define
	\begin{equation}
		j^\star := \min\{j: \hat{\omega}(e_{j,j})=1\} \in \{1,\ldots,\ell\} \cup \{\infty\}. 
	\end{equation}
	Additionally, we sample an independent Bernoulli random variable $Y$ with parameter $p/\hat{\P}[\hat{\omega}(e_{j^\star,j^\star+1}) = 1 \mid \hat{\omega}(e_{j^\star,j^\star})=1] \in [p,1]$. We define $\hat{W}'$ as before and $W_X'$ by
	\begin{equation}
	v_j \in W_X' \iff \begin{cases} \hat{\omega}(e_{j,j})=1 &\text{if}\ j \le j^\star, \\
		\hat{\omega}(e_{j-1,j}) \cdot Y =1 &\text{if}\ j = j^\star + 1, \\
		\hat{\omega}(e_{j-1,j}) =1 &\text{if}\ j > j^\star + 1. \\
	\end{cases} 
	\end{equation}
	Note that $j = \infty$ implies $W'_X = \emptyset$. It is straightforward to check that $W'_X \sim p_X(W,\boldsymbol{\cdot})$ and that the constructed coupling is monotone. 
	
	Having established \eqref{eq:lemma-3-main-step}, the simple observation that $\hat{p}(W,\boldsymbol{\cdot}) \preceq \hat{p}(Z,\boldsymbol{\cdot})$ if $W \subseteq Z$ allows to deduce that $p_X(W,\boldsymbol{\cdot}) \preceq\hat{p}(Z,\boldsymbol{\cdot})$ for every $W \subseteq Z \in \mathcal S$. Applying Theorem \ref{thm:Kamae} in the Markovian setting concludes the proof.
\end{proof}

In the next lemma, we compare the Markov chain $(\mathcal X_i)_{i\ge 0}$ with a simple bond percolation model defined as follows. Let $(Z^+(v),Z^-(v))_{v \in \Z^2}$ be independent Ber($p$)-distributed random variables and define $\eta \in \{0,1\}^{\vv{E}}$ by setting $\eta((u,v)) = Z^+(u) \cdot Z^-(v)$. We write $\pi_{p,p}$ for the law of $\eta$, and note that $\pi_{p,p} \in \mathcal P(\vv{\Z}^2,p^2)$ is levelwise independent. It is easily seen that $\pi_{p,p}$ percolates if and only if $p^2 > p_c^{\mathrm{site}}(\vv{\Z}^2)$.
\begin{lemma}\label{lem:markov-chain-lemma-2}
	Consider $\vv{\Z}^2$ with strict level structure $L$ given by $L_i := \{(x,y) \in \Z^2 : x+y=i \}$. There exists a coupling of $\eta \sim \pi_{p,p}$  and the Markov chain $(\mathcal X_i)_{i\ge 0}$ with initial state $\{0\}$ such that almost surely
	\begin{equation}
		\vv{\mathcal{C}}_0(\eta) \subseteq \bigcup_{i\ge 0} \mathcal X_i  .
	\end{equation}
\end{lemma}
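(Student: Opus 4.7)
The plan is to invoke Theorem~\ref{thm:Kamae} in the Markovian setting. First I would observe that $\pi_{p,p}$ is levelwise independent with respect to $L$: the edges in $\vv{E}_i$ depend only on $(Z^+(u))_{u \in L_{i-1}}$ and $(Z^-(v))_{v \in L_i}$, which are disjoint across distinct levels. Hence $(\vv{\mathcal{C}}_0(\eta) \cap L_i)_{i \ge 0}$ is itself a Markov chain on $\mathcal{S}$; call its transition kernel $p_Y$. Since Remark~\ref{rem:markov-chain-increasing-function} gives the stochastic monotonicity $p_X(W,\boldsymbol{\cdot}) \preceq p_X(Z,\boldsymbol{\cdot})$ whenever $W \subseteq Z$, it suffices by Theorem~\ref{thm:Kamae} (with $\mathrm{P}_0 = \mathrm{Q}_0 = \delta_{\{0\}}$) to establish the single-state domination $p_Y(W,\boldsymbol{\cdot}) \preceq p_X(W,\boldsymbol{\cdot})$ for every $W \in \mathcal{S}$.

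Both kernels factor as products over the maximal intervals of $W$: by definition for $p_X$, and for $p_Y$ because distinct maximal intervals pull from disjoint $Z^\pm$ variables. So it is enough to consider a single interval $W = \{u_1, \ldots, u_\ell\}$ of length $\ell \ge 1$, with out-neighbors labeled $v_1, \ldots, v_{\ell+1}$ from left to right. I would set
\begin{equation*}
\tilde{T} := \{v_k : Z^-(v_k) = 1\}, \qquad T := C_i \cap N_+(W) = \{v_k : Z^-(v_k) B_k = 1\},
\end{equation*}
where $B_1 := Z^+(u_1)$, $B_k := Z^+(u_{k-1}) \vee Z^+(u_k)$ for $2 \le k \le \ell$, and $B_{\ell+1} := Z^+(u_\ell)$. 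Then $T \subseteq \tilde{T}$ pointwise, while $\tilde{T}$ has the product $\mathrm{Ber}(p)$ distribution $\mu$ on $2^{N_+(W)}$. Crucially, $p_X(W,\boldsymbol{\cdot})$ is precisely $\mu$ with the mass $\mu(\{v_{\ell+1}\}) = p(1-p)^\ell$ transported from $\{v_{\ell+1}\}$ to $\emptyset$, so for any up-set $U$ one has $p_X(U) = \mu(U)$ except when $\{v_{\ell+1}\} \in U$ and $\emptyset \notin U$, in which case $p_X(U) = \mu(U) - p(1-p)^\ell$.

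By Strassen's theorem it suffices to check $p_Y(U) \le p_X(U)$ for every up-set $U$. When $\{v_{\ell+1}\} \notin U$, the pointwise inclusion $T \subseteq \tilde{T}$ immediately gives $p_Y(U) \le \mu(U) = p_X(U)$. The main step --- the only nontrivial case --- is when $\{v_{\ell+1}\} \in U$ and $\emptyset \notin U$: here the inequality becomes $P[\tilde{T} \in U, T \notin U] \ge p(1-p)^\ell$, and I expect the argument's content to lie in exhibiting enough ``slack''. The key observation is that the explicit event
\begin{equation*}
F := \{Z^-(v_{\ell+1}) = 1\} \cap \{Z^+(u_j) = 0 \text{ for all } j \in \{1, \ldots, \ell\}\}
\end{equation*}
has probability exactly $p(1-p)^\ell$ by independence, and on $F$ we have both $v_{\ell+1} \in \tilde{T}$ (so $\tilde{T} \in U$, since $U$ is an up-set containing $\{v_{\ell+1}\}$) and $B_k = 0$ for every $k$ (so $T = \emptyset \notin U$, since $U$ is proper). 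Thus $F \subseteq \{\tilde{T} \in U, T \notin U\}$, the required inequality follows, and Theorem~\ref{thm:Kamae} then produces a coupling with $\vv{\mathcal{C}}_0(\eta) \cap L_i \subseteq \mathcal{X}_i$ almost surely for every $i \ge 0$.
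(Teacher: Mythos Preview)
Your proof is correct and reaches the same conclusion as the paper's, but by a genuinely different route. The paper constructs an explicit monotone coupling of $W'_\eta \sim p_\eta(W,\boldsymbol{\cdot})$ and $W'_X \sim p_X(W,\boldsymbol{\cdot})$ on a single interval: it sets $j^\star := \min\{j : Z^+(u_j)=1\}$ and defines $W'_X$ by the rule $v_j \in W'_X$ iff $Z^+(u_j)=1$ for $j\le j^\star$ and $Z^-(v_j)=1$ for $j>j^\star$, then checks directly that $W'_\eta \subseteq W'_X$. You instead bypass the explicit coupling and verify Strassen's criterion, using the clean structural observation that $p_X(W,\boldsymbol{\cdot})$ is the product measure $\mu$ with the single mass $p(1-p)^\ell$ shifted from $\{v_{\ell+1}\}$ to $\emptyset$; this reduces everything to exhibiting one event $F$ of the right probability on which $\tilde T \in U$ but $T=\emptyset$. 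Your argument is arguably more transparent about \emph{why} the domination holds (the only obstruction is the moved mass, and $F$ exactly accounts for it), while the paper's is more constructive. A minor difference in the wrap-up: the paper applies Theorem~\ref{thm:Kamae} via the monotonicity $p_\eta(Z,\boldsymbol{\cdot}) \preceq p_\eta(W,\boldsymbol{\cdot})$ for $Z\subseteq W$, whereas you use $p_X(W,\boldsymbol{\cdot}) \preceq p_X(Z,\boldsymbol{\cdot})$; both are valid, though your appeal to Remark~\ref{rem:markov-chain-increasing-function} for this deserves a line of justification (the construction there is monotone in the starting state, not just in the $Z(v)$'s).
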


\begin{proof}
	The levelwise independence of $\pi_{p,p}$ implies that $(\vv{\mathcal C}_0(\eta) \cap L_i)_{i\ge 0}$ is a Markov chain on $\mathcal S$ with initial state $\{0\}$. Denote its transition probability by $p_\eta$. As in the proof of Lemma \ref{lem:markov-chain-lemma-1}, the main step is to show that for any interval $W$ of length $\ell \ge 1$,
	\begin{equation}\label{eq:lemma-4-main-step}
		p_\eta(W,\boldsymbol{\cdot}) \preceq p_X(W,\boldsymbol{\cdot}),
	\end{equation}
	and we proceed by constructing a monotone coupling of $W'_\eta \sim p_\eta(W,\boldsymbol{\cdot})$ and $W_X' \sim p_X(W,\boldsymbol{\cdot})$ based on  independent Ber($p$)-distributed random variables $(Z^+(v),Z^-(v))_{v \in \Z^2}$. Naturally, we  define $W'_\eta := \{v \in N_+(W): \exists u \in W\ \text{s.t.}\ Z^+(u)\cdot Z^-(v)=1\}$. In the case $\abs{N_+(W)} \le \abs{W}$, a monotone coupling is then obtained by defining $W_X':= \{v \in N_+(W) : Z^-(v)=1\}$. 
	
	In the case $\abs{N_+(W)} = \abs{W}+1$, we label the vertices in $W$ as $u_1,\ldots,u_\ell$ and the vertices in $N_+(W)$ as $v_1,\ldots,v_{\ell+1}$, by increasing $x$-coordinate. We define
	\begin{equation}
		j^\star := \min\{j: Z^+(u_j) = 1\} \in \{1,\ldots,\ell\} \cup \{\infty\}, 
	\end{equation}
	and then $W_X'$ by 
	\begin{equation}
		v_j \in W_X' \iff \begin{cases} Z^+(u_j) = 1 &\text{if}\ j \le j^\star, \\
			Z^-(v_j) = 1 &\text{if}\ j > j^\star. \\
		\end{cases} 
	\end{equation}
	Note that $j = \infty$ implies $W'_X = \emptyset$. It is straightforward to check that $W'_X \sim p_X(W,\boldsymbol{\cdot})$ and that the constructed coupling is monotone. 
	
	Having established \eqref{eq:lemma-4-main-step}, the case of general $W$ follows as in the proof of Lemma \ref{lem:markov-chain-lemma-1} and the simple observation that $p_\eta(Z,\boldsymbol{\cdot}) \preceq p_\eta(W,\boldsymbol{\cdot})$ if $Z \subseteq W$ allows to deduce  $p_\eta(Z,\boldsymbol{\cdot}) \preceq p_X(W,\boldsymbol{\cdot})$ for every $Z \subseteq W \in \mathcal S$.  Applying Theorem \ref{thm:Kamae} in the Markovian setting concludes the proof.
\end{proof}

\begin{proof}[Proof of Theorem \ref{thm:oriented-d-2}]
	For the lower bound, we note that a site percolation model on a directed graph $G=(V,\vv{E})$ naturally induces a bond percolation model on $G$ by declaring an edge $e=(u,v)$ open if $u$ is open in the site percolation. For an infinite, connected, locally finite graph $G$, it is immediate that the induced bond model percolates if and only if the site model percolates. Thus, $p_a^+(G) \ge p_c^{\mathrm{site}}(G)$.
	
	For the upper bound, consider the strict level structure $L$ given by $L_i := \{(x,y) \in \Z^2 : x+y=i\}$. Fix any $\P \in \mathcal{P}_a(\vv{\Z}^2,p)$ and its levelwise independent version $\hat{\P}_L$. It follows from Lemma \ref{lem:levelwise-decoupling}, Lemma \ref{lem:markov-chain-lemma-1} and Lemma \ref{lem:markov-chain-lemma-2} that
	\begin{equation}
		\P[0 \rightarrow \infty] \ge \hat{\P}_L[0\rightarrow \infty] \ge \mathbb P[\mathcal{X}_i \neq \emptyset, \forall i\ge 0 \mid \mathcal{X}_0 = \{0\}] \ge \pi_{p,p}[0 \rightarrow \infty].
	\end{equation}
	Thus, $\P$ percolates if $p^2 > p_c^{\mathrm{site}}(\vv{\Z}^2)$, and so we conclude that $p_a^+(\vv{\Z}^2) \le \sqrt{p_c^{\mathrm{site}}(\vv{\Z}^2)}$.
\end{proof}

\begin{remark}
    In view of the previous proof, it is natural to wonder whether the stochastic domination between the laws of $\vv{C}_0(\hat{\omega}_L)$ and $\vv{C}_0(\eta)$ is specific to the graph $\vv{\Z}^2$ or holds in greater generality. The following example shows that the stochastic domination is false if the directed graph $G$ contains a vertex $u$ with outdegree at least four. Restricted to $u$ and its four outgoing edges $e_1,\ldots,e_4$, a model $\P \in \mathcal P_a(G,p)$ is simply a positively associated probability measure on $\{0,1\}^{\{e_1,\ldots,e_4\}}$ with marginals $p$. Consider $\P$ such that $\omega(e_1)=\omega(e_2)$ and $\omega(e_3)=\omega(e_4)$ are sampled independently. Then the increasing event $A := \{\omega(e_1)=\omega(e_3)=1\} \cup \{\omega(e_2)=\omega(e_4)=1\}$ has probability $\P[A]=p^2$. However, $\pi_{p,p}[A] = 2p^3 - p^5$, which is strictly larger than $p^2$ if $p \in ((\sqrt{5}-1)/2,1)$.
\end{remark}

\subsection{Upper Bound of Theorem \ref{thm:unoriented-d-2}}
\label{subsec:upper-bound-unoriented-2-d}

In this subsection, we will obtain the upper bound of Theorem \ref{thm:unoriented-d-2} through a static renormalization. We refer the reader to \cite[Section 7.4]{grimmett1999percolation} for background on the use of renormalization techniques in percolation.
For any percolation model $\P_0 \in \mathcal{P}_a(\mathbb{Z}^2, p_0)$, where $p_0= 0.77$, we will construct a sequence of percolation models $\P_1 \in \mathcal{P}_a(\mathbb{Z}^2, p_1), \P_2 \in \mathcal{P}_a(\mathbb{Z}^2, p_2),\ldots $ such that 
\begin{equation}
\lim_{n \to \infty} p_n = 1, \quad  \text{and} \quad \forall n\ge 0,\ \P_{n+1}\ \text{percolates} \implies \P_n\ \text{percolates}.
\end{equation}
It is a standard consequence of Peierl's argument that for sufficiently small $\epsilon>0$, any 1-independent percolation model with marginals $1-\epsilon$ must percolate. Hence, the existence of such a sequence of percolation models will imply that $\P_0$ percolates, and thus establish the upper bound $p_a^+(\Z^2) \le 0.77$ of Theorem \ref{thm:unoriented-d-2}.

Unlike in most static renormalizations on $\mathbb{Z}^2$, we will consider crossing events in diagonal boxes instead of axis-parallel boxes. We fix the directions $e_1 = (1,1)$ and $e_2=(-1,1)$, and define the map $\phi: \Z^2 \to \Z^2$ by $\phi(x,y) := (w+1)\cdot (x e_1 + y e_2)$ for some fixed $w \in \Z_+$.  Let us now tile $\Z^2$ with diagonal squares given by 
\begin{equation}
    S(x,y) := \{v \in \Z^2 : d(\phi(x,y),v) \le w\}, \ \text{where}\ (x,y) \in \Z^2.
\end{equation}
We note that not every vertex is contained in one of the diagonal squares. Indeed, any two distinct squares are at distance at least 2 from each other. This will be important to preserve 1-independence under the renormalization scheme.

Next, let us introduce diagonal boxes for the relevant crossing events. For any $u \in \mathbb{Z}^2$ and $w,\ell \ge 0$, we define the diagonal boxes $\mathcal{B}_1(u;w,\ell)$ and $\mathcal{B}_2(u;w,\ell)$  as the respective subgraphs induced by the vertex sets
\begin{equation} \label{eq:definition-diagonal-box}
    \bigcup_{k=0}^\ell \{v \in \Z^2 :  d(u+ k \cdot e_i,v) \le w\} \ \text{for}\ i=1\ \text{resp.}\ i=2.
\end{equation}
If we orient the diagonal box $\mathcal{B}_i(u;w,\ell)$ in the direction $e_i$, we obtain a directed graph $\vv{\mathcal{B}}_i(u;w,\ell)$ that naturally partitions into $2(\ell + w) + 1$ levels with even (resp.\ odd) levels of size $w +1$ (resp.\ $w$). Importantly, this level structure, denoted by $(L_i)_{i=0}^{2(\ell+w)}$, is strict which will allow us to apply Lemma \ref{lem:markov-chain-lemma-1}. This is the primary reason for considering diagonal boxes instead of axis-parallel ones.  
    
    \begin{figure}
      \centering
      \subfloat[A crossing of $\mathcal{B}_1(\phi(x,y);w,w+1)$ in the $e_1$-direction]{\includegraphics[width=0.46\textwidth]{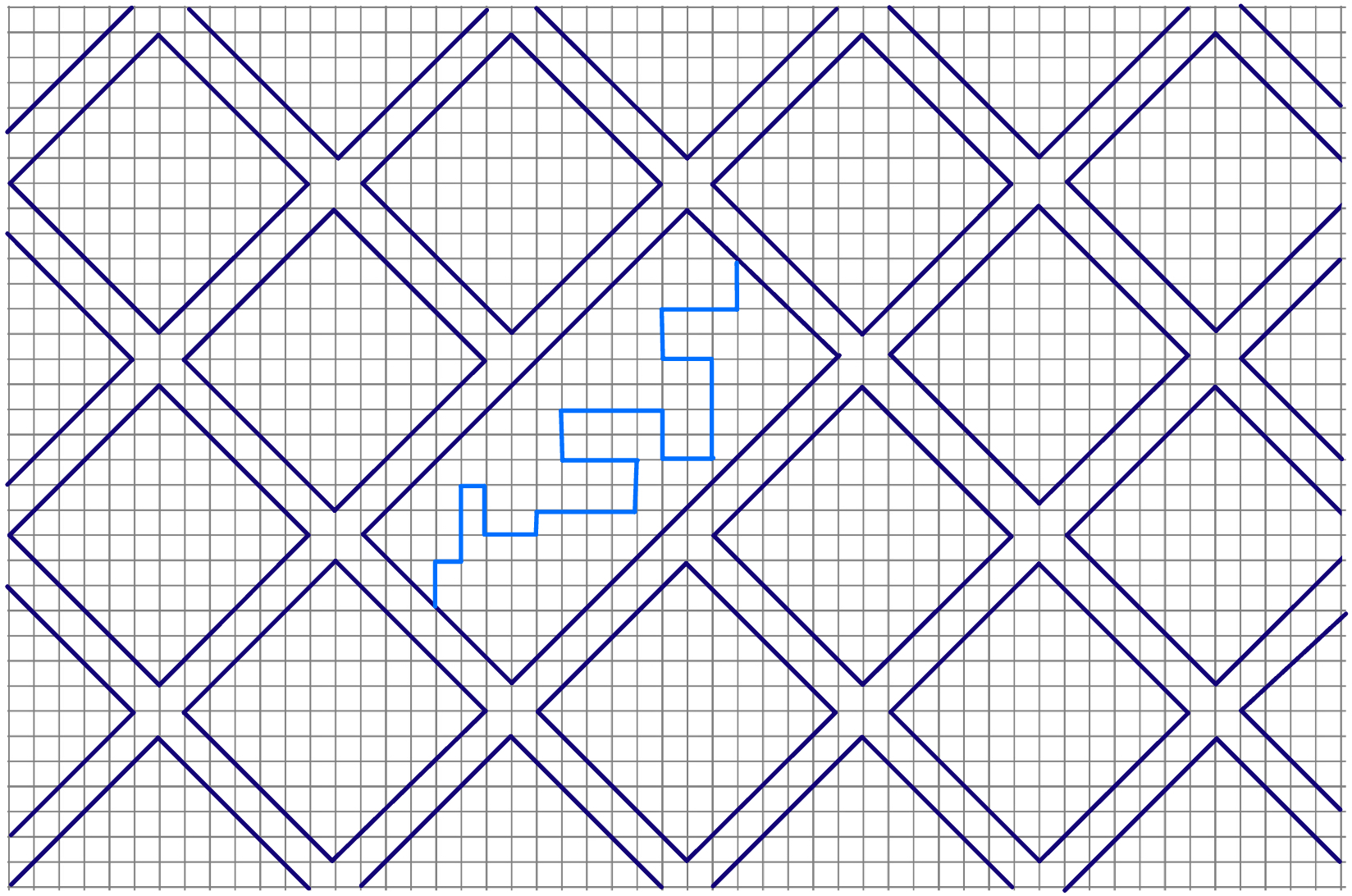}
      \label{fig:Renormalization1}}
      \hfill
      \subfloat[A crossing of $\mathcal{B}_2(\phi(x+1,y);w,0)$ in the $e_2$-direction]{\includegraphics[width=0.46\textwidth]{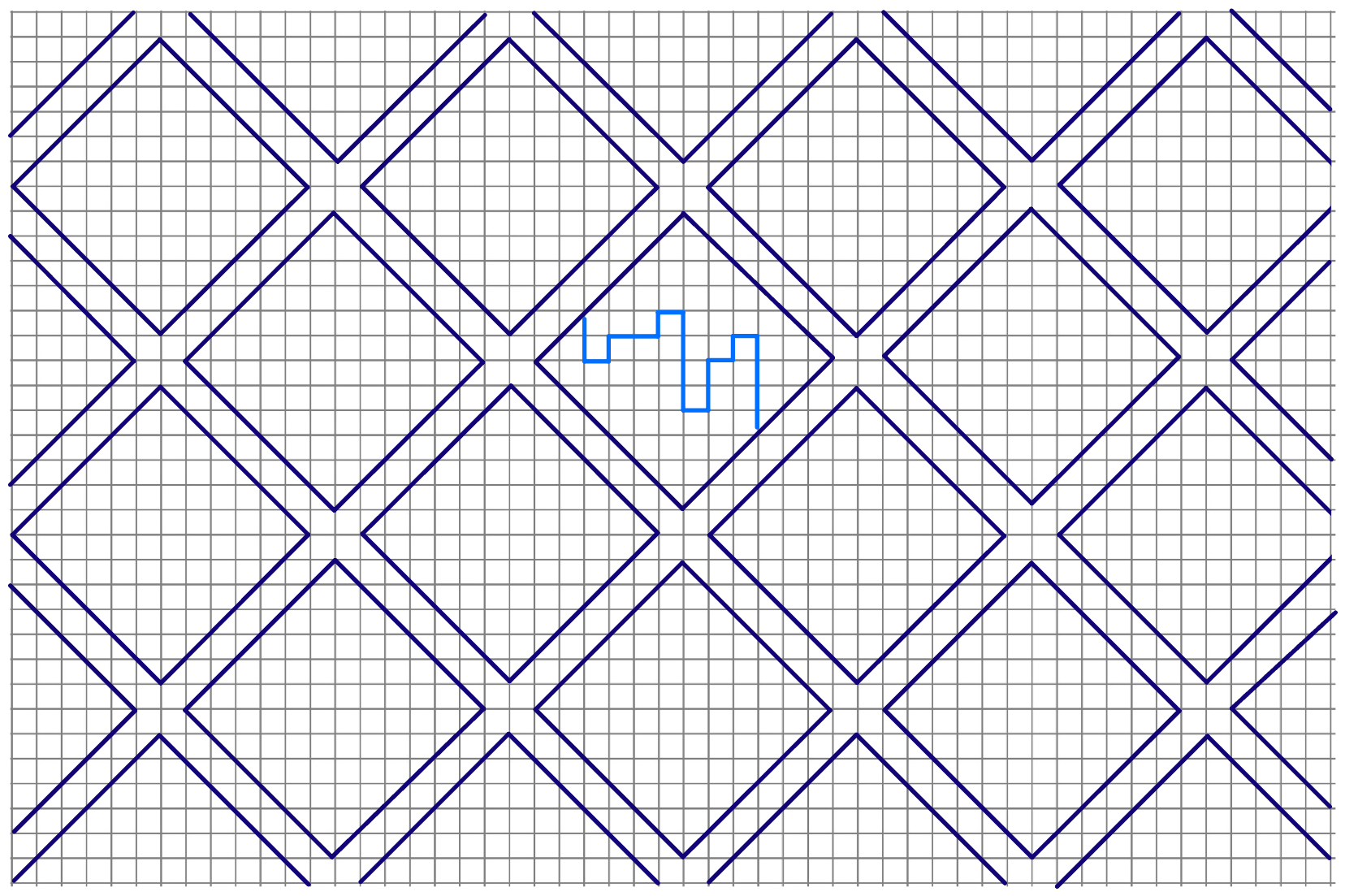}
      \label{fig:Renormalization2}}
      \caption{The edge $e=\{(x,y),(x+1,y)\}$ is declared $\omega_{n+1}$-open if the crossing events in (a) and (b) occur in $\omega_n$.}
    \end{figure}

We are now in position to describe the renormalization scheme, i.e.\ how to define the percolation model $\P_{n+1}$  based on $\P_n$. To this end, let $\omega_n$ be a percolation configuration on the underlying square lattice sampled according to $\P_n$. We then declare an edge $e=\{(x,y),(x+1,y)\}$ of the renormalized square lattice to be $\omega_{n+1}$-open if the diagonal long rectangle $\mathcal{B}_1(\phi(x,y);w,w+1)$ contains an $\omega_n$-open crossing in the $e_1$-direction (see Figure \ref{fig:Renormalization1}) and the diagonal square $\mathcal{B}_2(\phi(x+1,y);w,0)$ contains an $\omega_n$-open crossing in the $e_2$-direction (see Figure \ref{fig:Renormalization2}). 
Similarly, we declare an edge $e=\{(x,y),(x,y+1)\}$ to be $\omega_{n+1}$-open if the diagonal long rectangle $\mathcal{B}_2(\phi(x,y);w,w+1)$ contains an $\omega_n$-open crossing in the $e_2$-direction and the diagonal square $\mathcal{B}_1(\phi(x,y+1);w,0)$ contains an $\omega_n$-open crossing in the $e_1$-direction. 
This defines the percolation configuration $\omega_{n+1}$ based on $\omega_n$, and we write $\P_{n+1}$ for its law. It is standard and easy to check that $\P_{n}$ percolates if $\P_{n+1}$ percolates. Moreover, $\P_{n+1}$ inherits positive association and 1-independence from $\P_n$ since crossing events are increasing and the state of any two vertex-disjoint edges in the renormalized lattice depends on two vertex-disjoint sets of edges in the underlying lattice.

It remains to lower bound the marginal $p_{n+1}$ of the percolation model $\P_{n+1}$. By construction and positive association, 
\begin{align}
    \P_{n+1}[\{(x,y),(x+1,y)\}\ \text{open}] \ge &\P_n[\mathcal{B}_1(\phi(x,y);w,w+1) \ \text{crossed in}\ e_1\text{-direction} ] \\
    &\cdot \P_n[\mathcal{B}_2(\phi(x+1,y);w,0) \ \text{crossed in}\ e_2\text{-direction} ].
\end{align} 
In order to lower bound these two crossing probabilities, we orient the edges of the diagonal long rectangle $\mathcal{B}_1(\phi(x,y);w,w+1)$ in the $e_1$-direction and the edges in the diagonal square $\mathcal{B}_2(\phi(x+1,y);w,0)$ in the $e_2$-direction, and restrict our attention to oriented crossings. Recall the strict level structure $(L_i)_{i=0}^{4w+2}$ of $\vv{\mathcal{B}}_1(\phi(x,y);w,w+1)$ and $(L_i)_{i=0}^{2w}$ of $\vv{\mathcal{B}}_2(\phi(x+1,y);w,0)$ introduced below \eqref{eq:definition-diagonal-box}.  Applying Lemma \ref{lem:markov-chain-lemma-1} with $U=L_0$, we obtain 
\begin{equation}
    \P_n[\vv{\mathcal{B}}_1(\phi(x,y);w,w+1) \ \text{crossed in}\ e_1\text{-direction} ] \ge \mathbb P[\mathcal X_{4w + 2} \neq \emptyset \mid \mathcal X_0  =L_0] =: q_{4w+2}(p_n)
\end{equation}
and
\begin{equation}
    \P_n[\vv{\mathcal{B}}_2(\phi(x+1,y);w,0) \ \text{crossed in}\ e_2\text{-direction} ] \ge \mathbb P[\mathcal X_{2w} \neq \emptyset \mid \mathcal X_0  =L_0] =: q_{2w}(p_n),
\end{equation}
where we chose to lower bound both probabilities with respect to the same Markov chain as the diagonal boxes in the $e_1$-direction and the $e_2$-direction are identical up to reflection along the vertical axis. Moreover, we chose to make explicit the dependence of the right-hand side on $p_n$ (via the transition probability of  the Markov chain). In summary, we have argued that 
\begin{equation}
    \P_{n+1}[\{(x,y),(x+1,y)\}\ \text{open}] \ge q_{4w+2}(p_n) \cdot q_{2w}(p_n),
\end{equation} 
and analogously, one obtains the same bound for edges of the form $\{(x,y),(x,y+1)\}$. Thus, we have shown that for every $n\ge 0$, $\P_{n+1} \in \mathcal{P}_a(\Z^2,p_{n+1})$, where 
\begin{equation}
    p_{n+1} := q_{4w+2}(p_n) \cdot q_{2w}(p_n),
\end{equation}
and it remains to prove that $p_0 = 0.77$ implies
\begin{equation}\label{eq:asymp-survival}
        \lim_{n \rightarrow \infty} p_{n} = 1.
\end{equation}
In fact, using a standard result of reliability theory (see, e.g., \cite{durrett1992stochastic} or Section 2.5 in \cite{grimmett1999percolation}), it will be sufficient to prove that 
$p_1 > p_0$.
To state this result, we consider a finite product space $\{0,1\}^N$, where $N \in \Z_+$. A coordinate $i \in N$ is called \emph{essential} for an event $A\subset \{0,1\}^N$ if $\mathrm{1}_A(\omega_i) \neq \mathrm{1}_A(\omega^i)$ for some $\omega \in \{0,1\}^N$, where $\omega_i$ (resp.\ $\omega^i)$ is obtained from $\omega$ by setting the coordinate $i$ to $0$ (resp.\ $1$).
\begin{lemma}[Lemma 2 in \cite{durrett1992stochastic}] \label{DurrettIncreasingLemma}
    Let $\pi_p$ be the product measure on $\{0,1\}^N$, and let $A$ be an increasing event with at least two essential coordinates. Writing $h(p):=\pi_p(A)$, we have 
    \begin{equation}
        h(p) > p \quad \implies \quad \lim_{n \rightarrow \infty} h^{(n)}(p) = 1,
    \end{equation}
    where $h^{(n)}$ is the $n$-th composition of $h$.
\end{lemma}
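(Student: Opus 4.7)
The plan is to treat the iteration $p_n := h^{(n)}(p)$ as a one-dimensional dynamical system on $[0,1]$ and argue that its orbit must accumulate at $1$. First I would record the structural properties of $h$: expanding
\[
h(p) \;=\; \sum_{\omega \in A} p^{|\omega|}(1-p)^{N-|\omega|}
\]
exhibits $h$ as a polynomial in $p$, hence continuous on $[0,1]$. Since $A$ is increasing, $h$ is non-decreasing; and since $A$ has essential coordinates, $A$ is neither $\emptyset$ nor $\{0,1\}^N$, which combined with monotonicity forces $h(0)=0$ and $h(1)=1$.

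From the hypothesis $h(p)>p$ together with the monotonicity of $h$, a straightforward induction shows that $(p_n)$ is strictly increasing. Being bounded above by $1$, it converges to some limit $p^*\in(p,1]$, and continuity of $h$ yields $h(p^*)=p^*$.

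The core of the argument is to exclude $p^*\in[p,1)$. If such an interior fixed point existed, then for any $q\in(p,p^*)$ one can sandwich $q$ between consecutive iterates $p_n\le q<p_{n+1}$ and use monotonicity to obtain $h(q)\ge h(p_n)=p_{n+1}>q$; so the graph of $h$ lies strictly above the diagonal on $(p,p^*)$ and touches it at $p^*$, which forces $h'(p^*)\le 1$. To derive a contradiction, I would aim to show the converse inequality $h'(q)>1$ at every interior fixed point $q\in(0,1)$ of $h$.

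The main obstacle is precisely this transversality statement, and it is here that the hypothesis of at least two essential coordinates enters. With at most one essential coordinate, $h$ reduces to one of $0$, $1$, or the identity $p\mapsto p$, each of which makes $h(p)>p$ impossible; whereas with at least two essential coordinates $h(p)-p=p(1-p)\tilde g(p)$ for a nonconstant polynomial $\tilde g$, and the task reduces to showing that $\tilde g$ has at most one zero in $(0,1)$. I would attack this via the Russo--Margulis identity $h'(q)=\sum_i I_i(q)$ combined with the fixed-point identities $\pi_q(A\mid\omega_i=1)=q+(1-q)I_i(q)$ and $\pi_q(A\mid\omega_i=0)=q(1-I_i(q))$, or more structurally by counting sign changes of $h(p)-p$ in the Bernstein basis, exploiting the monotonicity in $k$ of the level densities $|A\cap\{|\omega|=k\}|/\binom{N}{k}$ that is forced by $A$ being increasing. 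Once this transversality is in hand, the convergence $p_n\to 1$ is immediate.
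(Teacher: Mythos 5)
The paper does not prove this lemma --- it is quoted verbatim from Durrett's reliability-theory argument --- so your attempt has to stand on its own, and as written it does not: the entire mathematical content of the statement is the ``transversality'' claim that $h'(q)>1$ at every interior fixed point $q\in(0,1)$ (equivalently, the single-crossing property of monotone reliability polynomials), and you explicitly identify this as ``the main obstacle'' and then only list two candidate strategies without carrying either out. Everything you do prove (monotonicity and continuity of $h$, $h(0)=0$, $h(1)=1$, convergence of the orbit to a fixed point $p^*$, and $h'(p^*)\le 1$ if $p^*<1$) is routine; the lemma is exactly the assertion that such a $p^*<1$ cannot exist. Moreover, the Bernstein-basis route as you describe it (``$\tilde g$ has at most one zero in $(0,1)$'') would not suffice even if executed: you also need the crossing to go from below to above the diagonal, since a single crossing in the other direction would make the orbit converge to that crossing point rather than to $1$. (Also, $\tilde g$ need not be nonconstant --- for $A=\{\omega_1=\omega_2=1\}$ one has $h(p)-p=-p(1-p)$.)

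The standard way to close the gap is the Moore--Shannon inequality $p(1-p)\,h'(p)\ge h(p)\bigl(1-h(p)\bigr)$, which has a one-line proof from Harris's inequality: writing $S=\sum_i\omega_i$, one has $p(1-p)h'(p)=\mathrm{Cov}(S,\mathbf 1_A)=\mathrm{Cov}(S-\mathbf 1_A,\mathbf 1_A)+\mathrm{Var}(\mathbf 1_A)\ge h(1-h)$, because $S-\mathbf 1_A$ and $\mathbf 1_A$ are both increasing. (Alternatively, Efron--Stein gives $\mathrm{Var}(\mathbf 1_A)\le p(1-p)\sum_i I_i(p)=p(1-p)h'(p)$, and its equality case --- additive functions --- is where the two-essential-coordinates hypothesis enters to give strictness.) In fact you do not even need the pointwise strict inequality at the fixed point: dividing Moore--Shannon by $h(1-h)$ and integrating shows that $p\mapsto\log\frac{h(p)}{1-h(p)}-\log\frac{p}{1-p}$ is non-decreasing on $(0,1)$, so $h(p)>p$ at one point forces $h(q)>q$ for all $q\in[p,1)$; the monotone orbit then has no fixed point available below $1$ and must converge to $1$. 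I recommend replacing your two unexecuted strategies with this argument.
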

Using Remark \ref{rem:markov-chain-increasing-function}, we write $q_{4w+2}(p)$ and $q_{2w}(p)$ as probabilities of increasing events on two separate product spaces. Taking the product of the two spaces, we then apply Lemma \ref{DurrettIncreasingLemma} to the function $h(p) := q_{4w+2}(p) \cdot q_{2w}(p)$ to conclude that 
 \begin{equation}
        p_1 = q_{4w+2}(p_0) \cdot q_{2w}(p_0) > p_0 \quad \implies \quad \lim_{n \rightarrow \infty} p_n = 1.
    \end{equation}

We are left with computing the survival probabilities $q_{4w+2}(p_0)$ and $q_{2w}(p_0)$. We do so using a computer-assisted computation based on an algorithm that is similar to the algorithm used in \cite{Balister1993}  for computing upper bounds on the critical probability  $p^{\mathrm{site}}_c(\vv{\mathbb{Z}}^2)$ (see also \cite{durrett1992stochastic} and \cite{balister1994improved}). We refer to Appendix \ref{appendix} for an outline of the algorithm. 
The following table shows the results of the computation for $w=20$ and $p_0= 0.77$ as well as for a slightly smaller value of $p_0$.
\begin{figure}[H]
    \centering
    \begin{tabular}{ |p{2cm}||p{4.5cm}|p{4.5cm}|p{2cm}|  }
 \hline
 marginal $p_0$& long rectangle $q_{4w+2}(p_0)$ & square $q_{2w}(p_0)$ & marginal $p_1$\\
 \hline
 $0.767$   & $< 0.7872$ & $< 0.939$  &   $< 0.74$\\
 $0.77$   & $\geq 0.8187$ & $\geq 0.949$  &   $\geq 0.776$\\
 \hline
\end{tabular}
    \caption{Survival Probabilities $q_{4w+2}(p_0)$ and $q_{2w}(p_0)$ with $w=20$}
    \label{fig:Survival_Probabilities}
\end{figure}
The second line establishes $p_1 >p_0$ for $p_0 = 0.77$, and it thus concludes the proof of $p_a^+(\vv{\Z^2}) \le 0.77$, the upper bound of Theorem \ref{thm:unoriented-d-2}. 
The first line  shows that for $w=20$, one cannot hope to improve much on $0.77$. From the results of Figure \ref{fig:Survival_Probabilities}, one might be inclined to think that using larger boxes (i.e.\ increasing the value of $w$) might yield a significant improvement. However, high confidence results in the following table suggest that even for $w=50$, no significant improvement on $0.77$ can be achieved.

\begin{figure}[H]
     \centering
    \begin{tabular}{ |p{2cm}||p{4.5cm}|p{4.5cm}|p{2cm}|  }
 \hline
 marginal $p_0$& long rectangle $q_{4w+2}(p_0)$ & square $q_{2w}(p_0)$ & marginal $p_1$\\
 \hline
 $0.76$   & $< 0.79$ & $< 0.96$  &   $< 0.7584$\\
 $0.77$   & $\geq 0.817$ & $\geq 0.948$  &   $\geq 0.774516$\\
 \hline
\end{tabular}
    \caption{$99\%$-confidence results for $q_{4w+2}(p_0)$ and $q_{2w}(p_0)$ with $w=50$}
    \label{fig:HighConfSurvival_Probabilities}
\end{figure}
To obtain the high confidence results of Figure \ref{fig:HighConfSurvival_Probabilities}, we sampled the Markov chain $(\mathcal{X}_i)_{i\ge 0}$ using the construction as an increasing function of independent, Ber($p$)-distributed random variables (see Remark \ref{rem:markov-chain-increasing-function}).

We conclude this section by pointing out that using the tiling of $\vv{\Z}^2$ described in \cite{durrett1992stochastic}, one could obtain an upper bound on $p^+_a(\vv{\Z}^2)$ in a similar fashion. This would yield a slight improvement of the bound in Theorem \ref{thm:oriented-d-2}.

\section{High-dimensional Oriented Percolation}
\label{sec:high-dimensional-percolation}

In this section, we study oriented percolation on $\vv{\Z}^n$ for large $n$ and present the proof of Theorem \ref{thm:oriented-d-large}. We follow the overall strategy of Bollobás and Kohayakawa \cite{Bollobas1994} and split the proof into two parts. First, we show in Subsection \ref{Sec:ShortTrees} that the local properties of 1-independent, positively associated bond percolation models on $\vv{\Z}^n$ are similar to those on an $n$-ary tree. Second, in Subsection \ref{Sec:Connecting}, we reduce the question about percolation on $\vv{\Z}^n$ to a simple question about percolation on $\vv{\Z}^2$. 

\subsection{Growing Short Trees}\label{Sec:ShortTrees}

Throughout this subsection, we fix the strict level structure $L$ given by the diagonals $L_{i} = \{v \in \vv{\Z}^{n}: v_1 + \ldots + v_{n} = i\}$. 
Using a comparison with a subcritical branching process, the following simple lemma establishes the existence of bond percolation models $\P \in \mathcal{P}(\vv{\Z}^n, \frac{\sqrt{2}}{n+1})$ that do not percolate. In particular, this implies the lower bound of Theorem \ref{thm:oriented-d-large}.

\begin{lemma}\label{SubcriticalBranching}
    For every $p \in [0,1]$ and $n\ge 1$, there exists $\P \in \mathcal{P}(\vv{\Z}^n, p)$ such that for every $i\ge 0$,
    \begin{equation}
        \P[\abs{\vv{\mathcal{C}}_0(\omega) \cap L_{2i}} > 0] \le \left(\frac{n(n+1)}{2}\cdot p^2\right)^i\, . 
    \end{equation}
\end{lemma}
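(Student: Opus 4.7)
My plan is to construct an explicit 1-independent bond percolation model $\P \in \mathcal{P}(\vv{\Z}^n,p)$ and then dominate the cluster $\vv{\mathcal{C}}_0(\omega)$ restricted to even levels by a subcritical Galton–Watson branching process with mean offspring $\binom{n+1}{2} p^2$. Setting $N_i := |\vv{\mathcal{C}}_0(\omega) \cap L_{2i}|$ and working at even levels, I exploit the fact that the forward 2-step neighbors of a vertex $v \in L_{2i}$ in $L_{2(i+1)}$ are precisely $\{v + w : w \in L_2 \cap \Z_{\ge 0}^n\}$, giving exactly $n + \binom{n}{2} = \binom{n+1}{2}$ candidates (the $n$ ``diagonal'' candidates $v + 2e_k$ and the $\binom{n}{2}$ ``off-diagonal'' ones $v + e_j + e_k$ with $j<k$).

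The key step is to show that for every such $v$ and every $w \in L_2 \cap \Z_{\ge 0}^n$,
\[
\P\bigl[v \to v + w \,\big|\, \mathcal F_i\bigr] \;\le\; p^2 ,
\]
where $\mathcal F_i$ denotes the natural filtration of the exploration up through level $2i$. A union bound summed over the at most $\binom{n+1}{2}$ candidate offspring of each of the $N_i$ parents then yields the contraction $\mathbb E[N_{i+1} \mid \mathcal F_i] \le \binom{n+1}{2} p^2 \cdot N_i$. Iterating and using $N_0 = 1$ gives $\mathbb E[N_i] \le (\binom{n+1}{2} p^2)^i$, whence Markov's inequality immediately delivers $\P[N_i > 0] \le \mathbb E[N_i] \le (\frac{n(n+1)}{2} p^2)^i$.

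The main obstacle is constructing $\P$ so that the pointwise bound $\P[v \to v+w] \le p^2$ holds uniformly in $w$. The diagonal case $w = 2e_k$ is automatic from the edge marginals since there is a unique length-two path, which is open with probability at most $p^2$. The off-diagonal case $w = e_j + e_k$ with $j \neq k$ involves two distinct length-two paths through $v + e_j$ and $v + e_k$, so a naive Bernoulli choice of $\P$ would yield only the weaker bound $\le 2p^2$ from a union over paths. To save the crucial factor of two, the model must couple the two paths so that they open or close (essentially) in unison, while still yielding edge marginals equal to $p$ and satisfying 1-independence. A natural candidate is a site-type construction of the form $\omega(u,v) = Z(u) \cdot Z(v)$ with suitable independent Bernoulli variables $Z(\cdot)$ attached to vertices, so that passage through either intermediate vertex $v + e_j$ or $v + e_k$ reduces to the same Boolean event on $Z$.

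This factor-of-two saving (which replaces $n^2$ by $\binom{n+1}{2}$) is precisely what is needed for the first-order term $\sqrt{2}/n$ appearing in Theorem \ref{thm:oriented-d-large}: subcriticality of the dominating branching process requires $\binom{n+1}{2} p^2 < 1$, i.e.\ $p < \sqrt{2/(n(n+1))} = \frac{\sqrt{2}}{n+1}(1+o(1))$, matching the claimed lower bound on $p_a^+(\vv{\Z}^n)$.
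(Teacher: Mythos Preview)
Your overall strategy --- branching-process domination at even levels, the count $|N_+^2(v)| = \binom{n+1}{2}$, and the identification of the crucial pointwise bound $\P[v \to v+w] \le p^2$ --- is exactly the paper's approach. The gap is in the construction itself: your candidate $\omega(u,v) = Z(u)\cdot Z(v)$ with $Z(\cdot)$ i.i.d.\ Ber$(\sqrt{p})$ does \emph{not} make the two length-two paths collapse to the same event. For $w = e_j + e_k$ with $j \ne k$ one computes
\[
\P[v \to v+w] \;=\; \P\bigl[Z(v)=Z(v+w)=1,\ Z(v+e_j)=1 \text{ or } Z(v+e_k)=1\bigr] \;=\; p\,(2\sqrt{p}-p),
\]
which exceeds $p^2$ for every $p \in (0,1)$. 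The intermediate-vertex variable $Z(v+e_j)$ (resp.\ $Z(v+e_k)$) still enters, so the two paths do not reduce to the same Boolean event, contrary to what you were hoping.

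The fix, used in the paper, is to attach randomness only to vertices in \emph{even} levels: for each $u \in L_{2i}$ take independent Ber$(p)$ variables $Z^{\mathrm{in}}_u, Z^{\mathrm{out}}_u$ and declare all outgoing (resp.\ incoming) edges of $u$ open iff $Z^{\mathrm{out}}_u = 1$ (resp.\ $Z^{\mathrm{in}}_u = 1$). Then an edge's status depends solely on its even-level endpoint, giving marginals exactly $p$ and $1$-independence; and for $v \in L_{2i}$ and any $w \in L_2 \cap \Z_{\ge 0}^n$, every length-two path from $v$ to $v+w$ is open iff $Z^{\mathrm{out}}_v = Z^{\mathrm{in}}_{v+w} = 1$, regardless of the intermediate (odd-level) vertex. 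This is precisely the collapse you were aiming for and yields $\P[v \to v+w] = p^2$ on the nose. The remainder of your argument (union bound over grandchildren, iteration, Markov) then goes through verbatim.
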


\begin{proof}
    Let $(Z^\text{in}_u, Z_u^\text{out})_{u \in \vv{\Z}^n}$ be a collection of independent Ber($p$)-distributed random variables. We construct $\omega \sim \P \in \mathcal{P}(\vv{\Z}^n, p)$ as follows: For every vertex $u$ in an \emph{even} level $L_{2i}$, $i \in \Z$, we declare all incoming (resp.\ outgoing) edges of $u$ open if and only if $Z^\text{in}_u = 1$ (resp.\ $Z^\text{out}_u = 1$). 
    Let $G(u)$ be the grandchildren of a vertex $u$, i.e.\ the vertices in $N_+^2(u)$ which are reachable from $u$ by an open directed path of length $2$. For $u \in L_{2i}$, $i\in \Z$,
    \begin{equation}
        \mathrm{E}\left[\abs{G(u)}\right] = \sum_{w\in N_+^2(u)} \P\left[ \omega((u,v))=\omega((v,w))=1\ \text{for some}\ v \in N_+(u) \right]  = \frac{n(n+1)}{2} \cdot p^2\,.
    \end{equation}
    A direct comparison with a branching process yields the upper bound 
    \begin{equation}
        \mathrm{E}\left[\abs{\vv{\mathcal{C}}_0(\omega) \cap L_{2i}}\right] \le \left(\frac{n(n+1)}{2} \cdot p^2\right)^i\quad \text{for every}\ i\ge 0\, ,
    \end{equation}
    and the result follows from Markov's inequality.
\end{proof}

To obtain the matching upper bound of Theorem \ref{thm:oriented-d-large}, the key step is to show that in sufficiently high dimension $n$ and for marginals $p$ strictly larger than $\frac{\sqrt{2}}{n}$, the cluster of the origin grows like a supercritical branching process for the first $O(\log(n))$ levels. The following lemma makes this statement precise.

\begin{lemma}\label{TightBranchingProcess}
    For every $\theta \in (0,1)$ and $c \in (0,\infty)$, there exists $C \in (0,\infty)$ such that for $n$ sufficiently large,
    \begin{equation}
        \P \in \mathcal{P}_a\left(\vv{\Z}^n,(1 + \theta)\frac{\sqrt{2}}{n} \right) \implies \P\left[\abs{\vv{\mathcal{C}}_0(\omega) \cap L_{C\log(n)}} \ge n^c \right] \ge \frac{\theta}{40 n^2}.
    \end{equation}
\end{lemma}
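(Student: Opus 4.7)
By Lemma~\ref{lem:levelwise-decoupling}, it suffices to prove the lemma for the levelwise independent version $\hat{\P}_L$ of $\P$, since the event $\{|\vv{\mathcal{C}}_0 \cap L_{C\log n}| \geq n^c\}$ is increasing. Under $\hat{\P}_L$, the sequence $X_i := \vv{\mathcal{C}}_0(\omega) \cap L_i$ is a Markov chain whose one-step transition from $X_{i-1} = W$ is governed by the positively associated measure $\P|_{\vv{E}_i}$ on the edges between $W$ and $L_i$. The aim is to show that $(|X_i|)_{i \geq 0}$ behaves like a supercritical branching process during its first $C \log n$ levels.

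For the first-moment estimate, FKG within a single level gives the following: for every $W$ with $|W| \leq \sqrt{n}$ and every $v \in N_+(W)$, forcing any single chosen edge from $W$ to $v$ to be open already implies $v \in X_i$, hence $\hat{\P}_L[v \in X_i \mid X_{i-1} = W] \geq p$. Combined with the combinatorial estimate $|N_+(W)| = n|W| \cdot (1 - O(|W|/n))$ in $\vv{\Z}^n$, summing over $v$ yields
\begin{equation*}
    \mathbb{E}[|X_i| \mid X_{i-1} = W] \geq (1+\theta)\sqrt{2} \cdot |W| \cdot (1-o(1)).
\end{equation*}
For a matching second-moment bound, the indicators $(\mathbf{1}_{v \in X_i})_{v \in N_+(W)}$ depend on disjoint edge sets within $\vv{E}_i$ and are positively correlated under $\P|_{\vv{E}_i}$; Lemma~\ref{lem:positive-association-k-independence} supplies quantitative correlation bounds sufficient to derive $\operatorname{Var}(|X_i| \mid X_{i-1} = W) \leq C(|W|+1) \cdot \mathbb{E}[|X_i| \mid X_{i-1} = W]$ for an absolute constant $C$.

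With these two estimates, one first isolates the seed event $S := \{|X_2| \geq 1\}$. Paley--Zygmund at level $2$ gives
\begin{equation*}
    \hat{\P}_L[S] \geq \frac{\mathbb{E}[|X_2|]^2}{\mathbb{E}[|X_2|^2]} \geq \frac{c\,(1+\theta)^4}{n^2},
\end{equation*}
since $\mathbb{E}[|X_2|] \geq (1+\theta)^2(1-o(1))$ while $\mathbb{E}[|X_2|^2] \leq |N_+^2(0)| \cdot \mathbb{E}[|X_2|] = O(n^2)$ using the crude bound $|X_2| \leq |N_+^2(0)|$. Conditional on $S$, the first- and second-moment estimates chain via Paley--Zygmund (applied with a relative error small enough to preserve a per-step growth factor of at least $1+\theta/2$) to show that, as long as $|X_{i-1}|$ stays below $\sqrt{n}$, the process sustains supercritical growth with constant conditional probability. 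Since the worst-case effective per-step branching factor is $1+\theta$ (a geometric average of the two-level factor $(1+\theta)^2$), a standard near-critical branching process survival bound yields $\Omega(\theta)$ for the probability that $|X_{C \log n}| \geq n^c$ conditional on $S$, provided $C$ is chosen sufficiently large in terms of $c$ and $\theta$. Combining the two estimates produces $\hat{\P}_L[|X_{C \log n}| \geq n^c] \geq \theta/(40n^2)$ for $n$ large enough, which transfers to $\P$ via Lemma~\ref{lem:levelwise-decoupling}.

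The principal obstacle is the second-moment bound in the presence of adversarial positive correlations: Lemma~\ref{SubcriticalBranching} illustrates that $\P$ may concentrate all outgoing randomness at a single vertex on one parity of levels, creating strong dependencies among sibling edges and inflating the variance of $|X_i|$. The levelwise independence reduction of Lemma~\ref{lem:levelwise-decoupling} localises such dependencies to a single level, after which Lemma~\ref{lem:positive-association-k-independence} delivers the tailored correlation control required to close the Paley--Zygmund iteration.
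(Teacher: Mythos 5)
Your reduction to the levelwise independent model and the overall ``supercritical growth plus Paley--Zygmund'' plan are in the spirit of the paper, but the proof has a genuine gap at its central step: the claimed variance bound $\mathrm{Var}\bigl(\abs{X_i} \bigm| X_{i-1}=W\bigr)\le C(\abs{W}+1)\cdot \mathbb E\bigl[\abs{X_i}\bigm| X_{i-1}=W\bigr]$ is false. Positive association and $1$-independence give no \emph{upper} bound on the correlation between sibling edges, i.e.\ edges sharing their tail vertex: Lemma \ref{lem:positive-association-k-independence} only yields the lower bound $\P[A\cap B]\ge\P[A]\P[B]$, and exact independence only for edge sets at distance at least $1$, which never applies to two edges $(u,v)$, $(u,v')$ out of the same parent. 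The extremal example is the one underlying Lemma \ref{SubcriticalBranching}: all $n$ outgoing edges of a vertex $u$ open simultaneously with probability $p$. This model is positively associated, $1$-independent and levelwise independent, and for $W=\{u\}$ it gives $\mathbb E[\abs{X_i}]=np=\sqrt2(1+\theta)$ but $\mathrm{Var}(\abs{X_i})=n^2p(1-p)\approx n\cdot\mathbb E[\abs{X_i}]$ --- larger than your bound by a factor of $n$. The correct general bound one can extract from $1$-independence (splitting pairs $v,v'$ according to whether they share a parent in $W$) is $\mathrm{Var}(\abs{X_i}\mid W)\le n\abs{W}\cdot\mathbb E[\abs{X_i}\mid W]$, and with this the per-step Paley--Zygmund probability is only of order $\mathbb E[\abs{X_i}\mid W]/(n\abs{W})=O(1/n)$; chained over $C\log n$ levels this gives $n^{-\Omega(\log n)}$, not a constant. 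So the ``chain via Paley--Zygmund with constant conditional success probability'' step does not go through, and the appeal to a ``standard near-critical branching process survival bound'' is unjustified because the process is not a branching process and its offspring distribution has uncontrolled (order-$n$) relative variance.

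This obstacle is exactly what the bulk of the paper's proof is devoted to. The paper does not track the full slices $\vv{\C}_0\cap L_i$; it builds a \emph{dynamic tree embedding} via allocation functions $h_i$ that assign to each active vertex a set of out-neighbours disjoint from those of every other active vertex, so that contributions of distinct parents are genuinely independent (conflicts of types (b) and (c) are shown to be rare, and same-parent conflicts of type (a) are resolved by a two-level ``expanding/non-expanding'' test-sampling scheme that recovers the full $n^2/2$ grandchildren per vertex, which is where the constant $\sqrt2$ comes from). The within-parent correlation is then absorbed into a \emph{single} application of Paley--Zygmund at the final level, where the second moment is shown to be at most $O(n^2)$ times the squared first moment --- this is precisely the origin of the $\theta/(40n^2)$ in the statement, and it cannot be obtained by iterating a per-level second-moment argument. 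To repair your proof you would need some analogue of this disjoint-allocation and two-level averaging device; the single-level moment estimates you propose cannot be correct as stated.
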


\begin{remark}
    To improve the upper bound on $p_a^+(\Z^n)$ of Corollary \ref{thm:unoriented-d-large} by a factor $1/2$, one needs to show that already for marginals $p$ strictly larger than $\frac{1}{\sqrt{2} n}$, the cluster of the origin grows like a supercritical branching process for the first $O(\log(n))$ levels. To this end, one would orient the edges away from the origin and consider the strict level structure given by $L_i = \{v \in \mathbb{Z}^n : d(0,v) = i\}$. For this choice, any vertex $v \in L_i$ has $2n-i$ outgoing edges, which is comparable with $2n$ for $i = O(\log(n))$.
\end{remark}

The rest of this subsection is devoted to proving Lemma \ref{TightBranchingProcess}. By Lemma \ref{lem:levelwise-decoupling}, it is without loss of generality to restrict our attention to levelwise independent bond percolation models. 

\subsubsection{Static Tree Embeddings}

To illustrate the general idea, let $T$ be a tree rooted at $0$ with levels $(T_i)_{i\ge 0}$ that is embedded into $\vv{\Z}^n$ in such a way that $T_i \subseteq L_i$ for every $i\ge 0$. We observe that by restriction, any levelwise independent bond percolation model $\P \in \mathcal{P}_a(\vv{\Z}^n,p)$ naturally induces a bond percolation model on $T$, and thanks to the 1-independence of $\P$, the statuses of two edges $(u_1,v_1)$ and $(u_2,v_2)$ of $T$ are independent if $u_1 \neq u_2$. 
 Given a vertex $v$ in $T$, we denote by $p(v)$ its parent, i.e.\ the unique vertex $u$ such that $(u,v)$ is an edge of $T$. Iteratively, we define the sets of \emph{active} vertices $\A_0 := \{0\}$ and for $i\ge 1$, 
\begin{equation}
    \A_i := \left\{v \in T_{i} : p(v) \in \A_{i-1}\ \text{and}\ \omega((p(v),v)) = 1 \right\}.
\end{equation}
These are the vertices at level $i$ which are in the open cluster of the root $0$ with respect to the restricted bond percolation on $T$. By construction, we have $\A_i \subseteq \mathcal C_0(\omega) \cap L_i$ for every $i\ge 0$, and our strategy will be to prove lower bounds on the size of $\A_i$ in order to establish Lemma \ref{TightBranchingProcess}. 

As a first attempt, we consider a static tree embedding. If $d \cdot \ell \leq n$, it is straightforward to check that a $d$-ary tree $T$ of height $\ell$ and rooted at $0$ can be embedded into $\vv{\Z}^n$ in such a way that $T_i \subseteq L_i$ for every $i\ge 0$. Choosing $d = \frac{n}{C\log(n)}$ and $\ell = C\log(n)$, the Paley-Zygmund inequality yields that for every $\theta \in (0,1)$ and $c \in (0,\infty)$, there exists $C \in (0,\infty)$ such that for every $n$ sufficiently large,
    \begin{equation} \label{eq:short-trees-first-attempt}
        \P \in \mathcal{P}_a\left(\vv{\Z}^n,(1 + \theta)\frac{C\log(n)}{n} \right) \implies \P\left[\abs{\A_{C\log(n)}} \ge n^c \right] \ge \frac{\theta}{4n}.
    \end{equation}
However, we observe that for marginal $p= (1 + \theta)C\log(n)/n$, the expected size of $\A_i$ is only $(dp)^i = (1+\theta)^{i}$, while $T_i$ has size $(n/C\log(n))^{i}$. Hence, much of the space allocated to the static tree embedding of the tree is actually not used. To make better use of this space, we work with dynamic tree embeddings from now on. 

\subsubsection{Dynamic Tree Embeddings}

From now on, we assume without loss of generality that every edge $e \in \vv{E}(\vv{\Z}^n)$ is open with probability $p := (1 + \theta)\frac{\sqrt{2}}{n}$ and that $p \le 2/n$. 
We begin with introducing our setup for dynamic tree embeddings. Iterating over the levels $(L_i)_{i\ge 0}$, we will define sets of \emph{active} vertices $(\A_i)_{i\ge 0}$, sets of \emph{reachable} vertices $(\mathcal R_i)_{i\ge 1}$, and \emph{allocation functions} $(h_i)_{i\ge 1}$ such that for every $i\ge 1$,
\begin{equation}
    \A_i \subseteq h_i(\A_{i-1})\subseteq \mathcal R_i .
\end{equation}
We initialise the construction with $\A_0 := \{0\}$. For $i\ge 1$, given the set of active vertices $\A_{i-1} \subseteq L_{i-1}$ in the previous level, we define the set of reachable vertices in the next level by 
\begin{equation}
    \mathcal R_i := \bigcup_{u \in \A_{i-1}} N_+(u) \subseteq L_i .
\end{equation}
The main step is then to chooses an allocation function $h_i : \A_{i-1} \to \mathcal R_i$ satisfying 
\begin{enumerate}
    \item[(i)] $h_i(u) \in N_+(u)$ for every $u \in \A_{i-1}$,
    \item[(ii)] $h_i(u) \cap h_i(v) = \emptyset$ for every $u \neq v \in \A_{i-1}$.
\end{enumerate}
In words, $h_i$ allocates each reachable vertex in $R_i$ to one (or none) of the vertices in $\A_{i-1}$, from which it can be reached. While setting $h_i(u)=\emptyset$ for each $u \in \A_{i-1}$ would be a valid choice, we will be interested in choosing an allocation function which allocates a large number of reachable vertices to each vertex in $\A_{i-1}$. 
As before, given a vertex $v \in \mathcal R_i$, we denote by $p(v) \in \A_{i-1}$ its parent, i.e.\ the unique vertex $u$ such that $v \in h_i(u)$.
Having chosen the allocation function $h_i$, one is then in position to define the active vertices in level $L_i$ by 
\begin{equation}
    \A_i := \bigcup_{u \in \A_{i-1}} \left\{ v \in h_i(u) : \omega((u,v)) = 1 \right\}. 
\end{equation}
We note that $\A_i \subseteq \mathcal C_0(\omega) \cap L_i$ for every $i\ge 0$. Importantly, the sets of active vertices $(\A_i)_{i\ge 1}$ only depend on the statuses of the edges in
\begin{equation}
    \left\{(u,v) \in \vv{E}(\vv{\Z}^n) : u \in \A_{i-1}\ \text{and}\ v \in h_i(u)\ \text{for some}\ i\ge 1\right\},
\end{equation}
which form a tree by construction. Therefore, whenever $u \neq u'$, the statuses of the edges in $\{(u,v): v \in h_i(u)\}$ are independent of the statuses of the egdes in $\{(u',v'): v' \in h_i(u')\}$. 

\subsubsection{Conflicting Pairs of Type (b) and (c)}

We now focus on the choice of the allocation functions $(h_i)_{i\ge 1}$. Whenever a vertex $v \in \mathcal R_i$ is reachable from more than one vertex in $\A_{i-1}$, say from $u$ and $u'$ (among others), there is a conflict of interest since we must choose whether $v \in h_i(u)$ or to $v \in h_i(u')$. In this case, we call  $(u,u')$ a \emph{conflicting pair} having a \emph{conflict} at $v$. Since $d(u,u')=2$, we have $d(p(u),p(u'))\in \{0,2,4\}$, and it will be useful to categorise conflicting pairs $(u,u')$ according to the distance between their respective parents $p(u),p(u') \in L_{i-2}$. We say that a conflicting pair $(u,u')$ is of type 
\begin{equation}
    \text{(a)}\ \text{if}\ p(u)=p(u'), \quad \text{(b)}\ \text{if}\ d(p(u),p(u'))=2, \ \text{and}\quad \text{(c)}\ \text{if}\ d(p(u),p(u'))=4.
\end{equation}
Note that for a conflicting pair $(u,u')$ of type (c), we have $p(u) + e_i + e_j = p(u') + e_k + e_\ell$ for some $1\le i,j,k,\ell\le n$ with $\{i,j\} \cap \{k,\ell\} = \emptyset$ and thus we must have $u \in \{p(u)+e_i, p(u)+e_j\}$ and $u' \in \{p(u')+e_k, p(u')+e_\ell\}$.
To further split (b) into three subcategories, we note that whenever $d(p(u),p(u'))=2$, there exist $1 \le i \neq j \le n$ such that $p(u) + e_i =  p(u')+ e_j$. We say that a conflicting pair $(u,u')$ is of type 
\begin{enumerate}
    \item[(b.i)] if $u = p(u) + e_i$ and $u' = p(u') + e_k$ for some $k \neq j$,
    \item[(b.ii)] if $u = p(u) + e_k$ and $u' = p(u') + e_j$ for some $k \neq i$,
    \item[(b.iii)] if $u = p(u) + e_k$ and $u' = p(u') + e_k$ for some $k \not\in \{i,j\}$.
\end{enumerate}
We observe that (b.i) and (b.ii) are anti-symmetric in $u,u'$, i.e. if $(u,u')$ is a conflicting pair of type (b.i), then $(u',u)$ is a conflicting pair of type (b.ii). Hence, it will be sufficient to resolve  the conflicting pairs of types (a), (b.ii), (b.iii), and (c) for every vertex $u \in \A_{i-1}$. 

It turns out that conflicting pairs of type (b.ii), (b.iii), and (c) are rare and will thus be easy to deal with.\\

\textbf{Claim 1:} Let $n$ be sufficiently large. With probability $1- \exp(-\sqrt{n})$, it holds that for every $1 \le i \le (\log(n))^2$ and for every active vertex $u \in \A_{i-1}$, the set
\begin{equation}
   \Gamma(u) := \bigcup_{\substack{u' \in \A_{i-1}:\ (u,u')\ \text{conflicting} \\ \text{pair of type (b.ii), (b.iii), or (c)}}} N_+(u) \cap N_+(u')
\end{equation}
has size at most $\sqrt{n}$.
\begin{proof}[Proof of Claim 1]
    Given $\A_{i-2}$, $\mathcal R_{i-1}$ and $h_{i-1}$, we fix a vertex $u \in \mathcal R_{i-1}$. Note that for every $v \in N_+(u)\subseteq L_i \cap (\Z_+)^n$, there are at most $i$ vertices $u' \in L_{i-1} \cap (\Z_+)^n $ with $v \in N_+(u')$, and hence, there are at most $ni$ potentially conflicting pairs $(u,u')$. 
    The key observation is now that every vertex $p' \in \A_{i-2}$ with $p' \neq p(u)$ can only be the parent of at most two vertices $u'$ for which $(u,u')$ is potentially a conflicting pair of type (b.ii), (b.iii) or (c). Indeed, if $d(p(u),p')=2$, there is one vertex $u' \in N_+(p')$ such that $(u,u')$ is potentially a conflicting pair of type (b.ii), and there is one vertex $u' \in N_+(p')$ such that $(u,u')$ is potentially a conflicting pair of type (b.iii).\footnote{However, if $d(p(u),p')=2$, there are $n-1$ vertices $u' \in N_+(p')$ such that $(u,u')$ is potentially a conflicting pair of type (b.i).}
    Otherwise, if $d(p(u),p')=4$, there are at most two vertices $u' \in N_+(p')$ such that $(u,u')$ is potentially a conflicting pair of type (c). 

    Since $h_{i-1}$ is a allocation function, we recall that the outgoing edges of distinct parents in $\A_{i-2}$ are sampled independently. Therefore, if $u \in \A_{i-1}$, the total number of active vertices in $u' \in \A_{i-1}$ for which $(u,u')$ is a conflicting pair of type (b.ii), (b.iii) or (c) is stochastically dominated by 
    \begin{equation}
        X := \sum_{m=1}^{ni} 2 X_m,
    \end{equation}
    where $(X_m)_{m=1}^{ni}$ are independent, Ber($4/n$)-distributed. Here, the factor $2$ accounts for the at most two vertices $u'$ per parent $p'$, and the parameter $4/n$ is an upper bound on the probability that at least one of the two outgoing edges of the parent is open. Using $i \le (\log(n))^2$, a Chernoff bound implies that for $n$ sufficiently large
    \begin{equation}
        \mathbb P[X \ge \sqrt{n}] \le e^{-2\sqrt{n}}.
    \end{equation}
    From there, the claim follows by taking a union bound over all $u \in \mathcal R_{i-1} \subseteq L_{i-1} \cap (\Z_+)^n$ and over all $i \in \{1,\ldots,(\log(n))^2\}$.
\end{proof}

Before continuing with the proof of Lemma \ref{TightBranchingProcess}, let us explain a simple way of dealing with conflicts of type (a) which implies that for every $\theta \in (0,1)$ and $c \in (0,\infty)$, there exists $C \in (0,\infty)$ such that for every $n$ sufficiently large,
    \begin{equation} \label{eq:short-trees-second-attempt}
        \P \in \mathcal{P}_a\left(\vv{\Z}^n,(1 + \theta)\frac{3}{2n} \right) \implies \P\left[\abs{\A_{C\log(n)}} \ge n^c \right] \ge \frac{\theta}{10n}.
    \end{equation}
The idea is to choose allocation functions $(h_i)_{i\ge 1}$ such that for every $i\ge 1$ and $u \in \A_{i-1}$, we have $\abs{h_i(u)}\le 2n/3$. We initialise with $h_1(0) = \{e_1,\ldots,e_{2n/3}\}$. Now, for $i\ge 2$, consider any $p \in \A_{i-2}$ and note that $\abs{h_{i-1}(p)}\le 2n/3$ by assumption. Thus, to each $u \in \A_{i-1}$ with $p(u)=p$, we can allocate $2n/3$ vertices from $N_+(u)$ without creating any conflicting pairs of type (a).\footnote{To keep notation simple, let us consider the case $h_{i-1}(p) = \{u_j :=p+e_j : 1 \le j \le 2n/3\} \subseteq \A_{i-1}$ as an example. Allocating to $u_j$ the vertices $\{u_j + e_k : j \le k \le j+  n/3 -1 \mod 2n/3\}$ and the vertices  $\{u_j + e_k : 2n/3 + 1 \le k \le n\}$ guarantees that there are no conflicts among the $u_j$'s, and thus no conflicting pairs of type (a) overall.} Taking care of conflicts of types (b) and (c) by removing the set $\Gamma(u)$ from the $2n/3$ vertices previously allocated to $u$, we obtain the set $h_i(u)$. 

On the one hand, choosing $\delta>0$ such that $1-\delta = \frac{1+\theta/2}{1+\theta}$, the previous claim ensures that with probability $1-\exp(-\sqrt{n})$, we have $\abs{h_i(u)}\ge (1-\delta) 2n/3$ for every $1 \le i \le (\log(n))^2$ and for every active vertex $u \in \A_{i-1}$. On the other hand, considering any 1-independent, positively associated percolation model on a $(1-\delta)2n/3$-ary tree with marginal $p=(1+\theta)3/2n$, the Paley-Zygmund inequality yields that for some fixed $C \in (0,\infty)$, there are at least $n^c$ active vertices in level $C\log(n)$ with probability at least $\theta/8n$. By taking the difference of these two bounds, \eqref{eq:short-trees-second-attempt} follows.

While the constant $3/2$ in \eqref{eq:short-trees-second-attempt} already comes close to the constant $\sqrt{2}$ in Lemma \ref{TightBranchingProcess}, we note that the allocation function chosen above only considers at most $4n^2/9$ vertices in the second out-neighbourhood of each vertex, i.e. it does not make use of $n^2/18$ vertices in the second out-neighbourhood. This suggests that the argument above is indeed wasteful, and so the rest of this subsection will be devoted to a more subtle treatment of conflicting pairs of type (a).

\subsubsection{Conflicting Pairs of Type (a)}

Before going into the details of the proof, we outline the main idea for resolving conflicts of type (a). We aim to construct allocation functions enabling us to consider $n^2/2$ vertices in the second out-neighbourhood of each vertex instead of only $4n^2/9$ vertices as in the previous approach. Initially, this can easily be achieved without creating conflicting pairs by choosing $h_1(0) = \{e_1,\ldots,e_n\}$ and then allocating $n/2$ vertices to each vertex in $\A_1$ by setting $h_2(e_j) = \{e_j + e_k : j \le k \le j+ n/2 -1 \mod n\}$ whenever $e_j \in \A_1$. Thus, it appears natural to try to allocate about $n$ vertices to active vertices in even levels and about $n/2$ vertices to active vertices in odd levels. However, already in the second level, we are not able to allocate about $n$ vertices to an active vertex $u \in h_2(e_j) \cap \A_2$ if the size of $h_2(e_j) \cap \A_2$ is about $n/2$, which actually happens if the outgoing edges of $e_j$ are `strongly correlated' (for a concrete example, consider the fully correlated case where all outgoing edges of $e_j$ are simultaneously open or closed). 

To deal with this issue, we will always consider two levels together, $L_{i-1}$ and $L_{i}$ for $i$ odd, and decide individually for each vertex $v \in \A_{i-1}$ whether (1) we allocate about $n$ vertices to $v$ and about $n/2$ vertices to its children, or (2) we allocate about $n/2$ vertices to $v$ and about $n$ vertices to its children. The decision for strategy (1) or (2) will depend on the probability distribution of the outgoing edges of $v$. On the one hand, in case the outgoing edges of $v$ are `weakly correlated' (for a concrete example, consider the independent case where all outgoing edges of $v$ are sampled independently), we have $h_{i}(v) \cap \A_{i}$ has size $o(n)$ with high probability, and hence every vertex in $h_{i}(v) \cap \A_{i}$ is only in $o(n)$ conflicting pairs of type (a). Thus, there is no problem in following strategy (1), i.e.\ choosing $h_{i}(v)$ of size about $n/2$ and $h_{i+1}(w)$ of size about $n$ for every $ w \in h_{i}(v) \cap \A_{i}$. On the other hand, in case the outgoing edges of $v$ are `strongly correlated', it might well be that most vertices in $h_{i}(v)$ are active, and so there might be too many conflicting pairs of type (a) in order to allocate to each such vertex about $n$ vertices. In this case, we therefore follow strategy (2). 

However, there is a problem with this decision for strategy (2): It might well be that $v$ is itself in many conflicting pairs $(u,v)$ of type (a), where the outgoing edges of $u$ and of $v$ are `strongly correlated'. Hence, we cannot allocate to each one of them about $n$ vertices. For this reason, we perform a `test sampling' to estimate if for such a vertex $v$, it is likely that many of its outgoing edges are actually open. If it turns out to be likely that many of $v$'s outgoing edges are open, we will call $v$ `expanding' and we indeed follow strategy (2). Otherwise, we return to strategy (1). Importantly, only few vertices will be `expanding', and this will allow us to resolve the described conflicting pairs $(u,v)$. 

We now describe the choice of the allocation functions $(h_i)_{i\ge 1}$ in detail. Fix $\delta>0$ such that $1-\delta = \frac{1+\theta/2}{1+\theta}$. Recall that we have assumed without loss of generality that every edge $e \in \vv{E}(\vv{\Z}^n)$ is open with probability at most $2/n$. 
Let $1 \le i \le (\log(n))^2$ be odd. Our goal is to choose $h_i$ and $h_{i+1}$. Based on Claim 1, it is straightforward to construct 
\begin{equation}
    f_i : \A_{i-1} \to \mathcal R_i
\end{equation}
with $f_i(u) \subseteq N_+(u)$ for every $u \in \A_{i-1}$ such that there are no conflicts of type (b) and (c) and such that with probability $1-\exp(-\sqrt{n})$, $f_i(u)$ has size at least $(1-\delta/2)n$ for every $u \in \A_{i-1}$. Since we are only left with conflicting pairs of type (a), we note that $f_i(u) \cap f_i(v) \neq \emptyset$ only if $p(u) =p(v)$ and in this case $\abs{f_i(u) \cap f_i(v)}=1$. Moreover, it is easy to see that every conflict is associated with a unique conflicting pair.
The function $f_i$ is a precursor of the allocation function $h_i$, and we will proceed by constructing functions $g_i$, $h_i$ such that for every $u \in \A_{i-1}$, we have $f_i(u) \supseteq g_i(u) \supseteq h_i(u)$. 

Next, we will `test sample' a subset of size $\frac{|f_i(u)|}{2 \log(n)}$ from the each $f_i(u)$. We need to make sure that these subsets are disjoint. To this end, whenever $w$ is an element of $f_i(u)$  and $f_i(v)$, we assign $w$ to either $u$ or $v$ uniformly at random, and we denote by $\phi_i(u)$ (resp. $\phi_i(v)$) the vertices assigned to $u$ (resp.\ $v$). We then remove from $\phi_i(u)$ any vertex with probability $\frac{1}{2}$ that is in no other $f_i(v)$. Finally, we pick at random each vertex in $\phi_i(u)$ with probability $\frac{1}{\log(n)}$ and remove the rest. We call the resulting sets $\varphi_i(u)$. We now reveal for every $u \in \A_{i-1}$ the states of the edges $(u,v)$ where $v \in \varphi_i(u)$. If more than $\lambda \abs{\varphi_i(u)}$ of these edges are open, where the constant $\lambda \in (0,1)$ will be specified later, then we call $u$ \emph{expanding}. Otherwise, we call $u$ \emph{non-expanding}.

We now define $g_i : \A_{i-1} \to \mathcal R_i$ by
\begin{equation}
    g_i(u) := f_i(u) \setminus \left(\bigcup_{v \in \A_{i-1}:\, v\neq u} \varphi_i(v) \cup \bigcup_{\substack{v \in \A_{i-1}:\, v\neq u, \\ v\ \text{expanding}}} f_i(v) \right).
\end{equation}
In words, we remove from $f_i(u)$ all vertices that were `test sampled' and all vertices that are contained in some other $f_i(v)$ where $v$ is expanding.\\

\textbf{Claim 2:} Let $1 \le i \le (\log(n))^2$ and let $n$ be sufficiently large. With probability $1- 2\exp(-\sqrt{n})$, it holds that for every active vertex $u \in \A_{i-1}$, the set $g_i(u)$ has size at least $(1-\delta)n$.
\begin{proof}[Proof of Claim 2]
     Fix a vertex $u \in \A_{i-1}$. For any other vertex $v \in \A_{i-1}$, we have $\abs{f_i(u) \cap f_i(v)} = 1$ if $p(u)=p(v)$ and $\abs{f_i(u) \cap f_i(v)} = 0$ otherwise. Now, in the case  $f_i(u) \cap f_i(v) = \{w\}$, the vertex $w$ is only removed from $f_i(u)$ if $v$ is expanding or if $w \in \varphi(v)$. This occurs with probability at most $\frac{1}{\log(n)}$, independently from any other $v' \in \A_{i-1}$.
     A Chernoff bound implies that the probability of removing more that $\delta n / 2$ vertices from $f_i(u)$ is bounded from above by $\exp(-2\sqrt{n})$ for $n$ sufficiently large. Finally, we conclude by taking a union bound over all $u \in \A_{i-1} \subseteq L_{i-1} \cap (\Z_+)^n$ and by using that with probability $1 - \exp(-\sqrt{n})$, $f_i(u)$ has size at least $(1 - \delta/2)n$ for every $u\in \A_{i-1}$.
\end{proof}

We are now in position to define the allocation function $h_i : \A_{i-1} \to \mathcal R_i$. For simplicity, set $n' := (1-\delta)n$. 
First, for every non-expanding vertex $u \in \A_{i-1}$, we start by choosing a random subset of $g_i(u)$ of size $n'$ that we denote by $g'_i(u)$. This is possible with probability $1-2\exp(-\sqrt{n})$ by Claim 2. To obtain $h_i$ from $g'_i$, we remove any conflict $w \in g'_i(u) \cap g'_i(v)$ from either $g'_i(u)$ or $g'_i(v)$ uniformly at random, and we remove any vertex $w \in g'_i(u)$ involved in no conflict from  $g'_i(u)$ with probability $1/2$. The resulting sets $h_i(u)$ have expected size $n'/2$, and by construction, none of the non-expanding vertices is involved in conflicts. 
Second, for every expanding vertex $u \in \A_{i-1}$, we simply choose $g'_i(u)$ to be a random subset of $g_i(u)$ of size $n'$ and define $h_i(u) := g'_i(u)$. This is again possible with probability $1-2\exp(-\sqrt{n})$, and by construction, none of the expanding vertices is involved in conflicts. 
In summary, with probability $1-3\exp(-\sqrt{n})$, we have successfully defined the allocation function $h_i : \A_{i-1} \to \mathcal R_i$.

The next claim establishes that vertices which are wrongly classifying as non-expanding are rare. It is in the proof of this claim that we specify the constant $\lambda \in (0,1)$.\\

\textbf{Claim 3:} Let $1 \le i \le (\log(n))^2$ and let $n$ be sufficiently large. With probability $1- \exp(-\sqrt{n})$, it holds that for every non-expanding vertex $u \in \A_{i-1}$, at most $\delta n / 2$ of the outgoing edges of $u$ are open.
\begin{proof}[Proof of Claim 3]
    Fix a non-expanding vertex $u \in \A_{i-1}$. We write  $\varphi_i^\text{open}(u)$ and $f_i^\text{open}(u)$ for the subset of vertices $v$ in $\varphi_i(u)$ respectively in $f_i(u)$, for which the edge $(u,v)$ is open. We aim to show that for some constant $\lambda \in (0,1)$,
    \begin{equation}\label{eq:claim-3}
        \P\left[ \abs{f_i^\text{open}(u)} \ge \frac{\delta n}{2} \biggm| \abs{\varphi_i^\text{open}(u)} \le \lambda \abs{\varphi_i(u)} \right] \le 2\exp(-2\sqrt{n}).
    \end{equation}
    
    It is thus without loss of generality to assume that $\abs{f_i^\text{open}(u)} \ge n^{3/4}$. Chernoff bounds directly imply that for $\epsilon>0$ and $n$ sufficiently large, we have with probability at least $1 - \exp(-2\sqrt{n})$, 
    \begin{align*}
        (1 - \epsilon)\frac{\abs{f_i(u)}}{2 \log(n)}  \leq &\abs{\varphi_i(v)} \leq (1+\epsilon)\frac{\abs{f_i(u)}}{2 \log(n)}, \\
        (1-\epsilon)\frac{\abs{f_i^\text{open}(u)}}{2 \log(n)} \leq &\abs{\varphi^\text{open}_i(v)} \leq (1+\epsilon)\frac{\abs{f_i^\text{open}(u)}}{2 \log(n)}.
    \end{align*}
    For $\lambda := \frac{\delta(1-\epsilon)}{3(1+\epsilon)}$, we deduce that
    \begin{equation}
        \P\left[ \abs{\varphi_i^\text{open}(u)} \le \lambda \abs{\varphi_i(u)},\  \abs{f_i^\text{open}(u)} \ge \frac{\delta n}{2} \right]
        \le  \exp(-2\sqrt{n}).
    \end{equation}
    When combined with the fact that 
    \begin{align*}
        \P\left[ \abs{\varphi_i^\text{open}(u)} \le \lambda \abs{\varphi_i(u)} \right] &\ge \P\left[  \abs{f_i^\text{open}(u)} \le \frac{\lambda n}{4\log(n)},\ \abs{\varphi_i(u)} \geq \frac{n}{4\log(n)}\right] \\
        &\ge    \left(1-\frac{8\log(n)}{\lambda n}\right) \cdot \left(1 - o(1)\right) = 1 - o(1),
    \end{align*}
    where we used independence and Markov's inequality in the second step, we obtain \eqref{eq:claim-3} for $n$ sufficiently large. Taking a union bound over all non-expanding $u \in \A_{i-1} \subseteq L_{i-1} \cap (\Z_+)^n$ concludes the proof.
\end{proof}

We now define the allocation function $h_{i+1} : \A_{i} \to \mathcal R_{i+1}$. Recall that $n' := (1-\delta)n$. It is again straightforward based on Claim 1 to construct 
\begin{equation}
    f_{i+1} : \A_{i} \to \mathcal R_{i+1}
\end{equation}
with $f_{i+1}(u) \subseteq N_+(u)$ for every $u \in \A_{i}$ such that there are no conflicts of type (b) and (c) and such that with probability $1-\exp(-\sqrt{n})$, $f_{i+1}(u)$ has size at least $(1-\delta/2)n$ for every $u \in \A_{i}$. 
Next, we need to deal with conflicting pairs of type (a) to obtain $h_{i+1}$ from $f_{i+1}$. We proceed separately for each parent $p \in \A_{i-1}$. 
First, if $p$ is expanding, we can deterministically allocate $n/2$ vertices from $N_+(u)$ to each vertex $u \in \A_i$ with parent $p$ so that there are no conflicting pairs of type (a). To obtain $h_{i+1}(u)$ from these $n/2$ vertices, we simply pick at random a subset of size $n'/2$ of vertices that also belong to $f_{i+1}(u)$. This is possible with probability $1-\exp(-\sqrt{n})$. 
Second, if $p$ is non-expanding, Claim 3 guarantees that with probability $1- \exp(-\sqrt{n})$, there are at most $\delta n / 2$ vertices $u \in \A_i$ with parent $p$. Thus, we can obtain $h_{i+1}(u)$ from $f_{i+1}(u)$ by removing all remaining conflicts (which are necessarily of type (a)) and then picking at random a subset of size $n'$. This is possible with probability $1-2\exp(-\sqrt{n})$.   
In summary, with probability $1-2\exp(-\sqrt{n})$, we have successfully defined the allocation function $h_{i+1} : \A_{i} \to \mathcal R_{i+1}$. By construction, for every $u \in \A_i$, we have
$\abs{h_{i+1}(u)} = n'$ if and only if $\mathrm E[\abs{h_{i}(p(u))}] = n'/2$, and $\abs{h_{i+1}(u)} = n'/2$ if and only if $\abs{h_{i}(p(u))} = n'$.

\subsubsection{Conclusion}

    Denote by $\mathcal{H}_i$ the high-probability event that the allocation function $h_i$ could successfully be defined given $\mathcal A_{i-1}$. By convention, we set $\mathcal A_i := \emptyset$ on the complement of $\mathcal{H}_i$. 
    Our goal is to prove that for every $1\le i \le \log(n)^2$ odd,
    \begin{equation}\label{eq:conclusion}
        \P\left[\abs{\A_{i+1}} \ge \frac{1}{4} (1+\theta/2)^{i+1}\right] \geq \frac{\theta}{40n^2}.
    \end{equation}
    Once we have established this lower bound, the proof of Lemma \ref{TightBranchingProcess} will be complete.

    To establish \eqref{eq:conclusion}, we compute the first and second moment of $\abs{\A_{i+1}}$ and then apply the Paley-Zygmund inequality. Recall that we have assumed without loss of generality that each edge $e \in \vv{E}(\vv{\Z}^n)$ is open with probability $p=(1 + \theta)\frac{\sqrt{2}}{n}$. To shorten notation, we define $\varphi := (1 + \frac{\theta}{2})$, and we introduce the random vector $\tau \in \{0,1\}^{\A_{i-1}}$, where for any vertex $u \in \A_{i-1}$ we have that $\tau_u = 1$ if $u$ is expanding and $\tau_u = 0$ otherwise. Moreover, for $u \in L_{i-1} \cap (\Z_+)^n$, we define 
    \begin{equation}
        c_1(u) := \abs{h_i(u) \cap \A_i}\quad \text{and} \quad c_2(u) := \abs{\bigcup_{v \in h_i(u) \cap \A_i} h_{i+1}(v) \cap \A_{i+1}}, 
    \end{equation}
    which are the number of active children and grandchildren, respectively. By convention, $c_1(u)=c_2(u)=0$ if $u \notin \A_{i-1}$.
    
    We begin with computing the first moment of $\abs{\A_{i+1}}$. For every $u \in L_{i-1} \cap (\Z_+)^n$,
    \begin{align} 
        &\mathrm E[c_2(u) \mid \A_{i-1}] = \left(\mathrm E[c_2(u) \mathbf{1}_{\tau_u = 0} \mathbf{1}_{\mathcal{H}_i \cap \mathcal{H}_{i+1}}]  + \mathrm E[c_2(u) \mathbf{1}_{\tau_u = 1} \mathbf{1}_{\mathcal{H}_i \cap \mathcal{H}_{i+1}}] \right) \cdot \mathbf{1}_{u \in \A_{i-1}} \\
        &= \left( (n'p) \cdot \mathrm E[ c_1(u) \mathbf{1}_{\tau_u = 0} \mathbf{1}_{\mathcal{H}_i \cap \mathcal{H}_{i+1}} ]  + \frac{n'p}{2} \cdot \mathrm E[c_1(u) \mathbf{1}_{\tau_u = 1} \mathbf{1}_{\mathcal{H}_i \cap \mathcal{H}_{i+1}} ] \right) \cdot \mathbf{1}_{u \in \A_{i-1}} \\
        &= \left(\frac{n'p}{2}  \cdot \mathrm E[ \abs{g'_i(u) \cap \A_i} \mathbf{1}_{\tau_u = 0} \mathbf{1}_{\mathcal{H}_i \cap \mathcal{H}_{i+1}} ]  + \frac{n'p}{2} \cdot \mathrm E[\abs{g'_i(u) \cap \A_i} \mathbf{1}_{\tau_u = 1} \mathbf{1}_{\mathcal{H}_i \cap \mathcal{H}_{i+1}} ] \right) \cdot \mathbf{1}_{u \in \A_{i-1}} \\ 
        &= \frac{n'p}{2}  \cdot \mathrm E[ \abs{g'_i(u) \cap \A_i} \mathbf{1}_{\mathcal{H}_i \cap \mathcal{H}_{i+1}}] \cdot \mathbf{1}_{u \in \A_{i-1}}  = \frac{(n'p)^2}{2}  \cdot \mathrm E[\mathbf{1}_{\mathcal{H}_i \cap \mathcal{H}_{i+1}} ] \cdot \mathbf{1}_{u \in \A_{i-1}}  \\
        & \ge \varphi^2 \cdot (1-5\exp(-\sqrt{n})) \cdot  \mathbf{1}_{u \in \A_{i-1}} .
    \end{align}
    It now follows by induction that 
    \begin{equation}
        \varphi^{i+1} \ge \mathrm E[ \abs{\A_{i+1}}] \ge \varphi^{i+1} \cdot (1-\exp(-n^{1/3})) .
    \end{equation}

    To compute the second moment, let $u\neq v \in L_{i-1} \cap (\Z_+)^n$. To shorten notation, we write $\mathrm E'[\ \boldsymbol{\cdot}\ ]$ for  $\mathrm E[\ \boldsymbol{\cdot}\ \mathbf{1}_{\mathcal{H}_i \cap \mathcal{H}_{i+1}} ]$.  As in the first moment computation, we obtain
    \begin{align}
        &\mathrm E[c_2(u) c_2(v) \mid \A_{i-1}] = \sum_{x,y\in \{0,1\}} \mathrm E'[c_2(u) c_2(v) \mathbf{1}_{(\tau_u,\tau_v)=(x,y)}  ] \cdot  \mathbf{1}_{u,v \in \A_{i-1}} \\
        &=  \sum_{x,y\in \{0,1\}} \frac{(n'p)^2}{2^{x+y}}\cdot \mathrm E'[c_1(u) c_1(v) \mathbf{1}_{(\tau_u,\tau_v)=(x,y)}   ] \cdot  \mathbf{1}_{u,v \in \A_{i-1}}.
    \end{align}
    If $p(u) \neq p(v)$, we have
    \begin{equation}
        \mathrm E'[c_1(u) c_1(v) \mathbf{1}_{(\tau_u,\tau_v)=(x,y)}] =  \mathrm E'[c_1(u) \mathbf{1}_{\tau_u=x}] \cdot \mathrm E'[c_1(v) \mathbf{1}_{\tau_v=y}],
    \end{equation}
    and similar to the first moment computation, one then obtains the upper bound  
    \begin{align}
        \mathrm E'[c_2(u) c_2(v) \mid \A_{i-1}] \le \varphi^4 \cdot  \mathbf{1}_{u,v \in \A_{i-1}}.
    \end{align}
    If $p(u) = p(v)$, we have 
    \begin{equation}
        \mathrm E'[c_1(u) c_1(v) \mathbf{1}_{(\tau_u,\tau_v)=(x,y)}] \le  (\mathrm E'[c_1(u) \mathbf{1}_{\tau_u=x}]+1) \cdot (\mathrm E'[c_1(v) \mathbf{1}_{\tau_v=y}]+1)
    \end{equation}
    since the state of $\tau_v$ (resp.\ $\tau_u$) affects at most one vertex in the assignment $g_i(u)$ (resp.\ $g_i(v)$). Thus, we get
    \begin{align}
        &\mathrm E[c_2(u) c_2(v) \mid \A_{i-1}] \\
        &\le  \sum_{x\in \{0,1\}} \frac{n'p}{2^{x}}\cdot  (\mathrm E'[c_1(u) \mathbf{1}_{\tau_u=x}]+1) \cdot \sum_{y\in \{0,1\}} \frac{n'p}{2^{y}} \cdot (\mathrm E'[c_1(v) \mathbf{1}_{\tau_v=y}]+1) \cdot\mathbf{1}_{u,v \in \A_{i-1}}  \\
         &\le \left(\frac{n'p}{2}\right)^2 \cdot  \sum_{x\in \{0,1\}}  \left(\mathrm E'[ \abs{g'_i(u) \cap \A_i} \mathbf{1}_{\tau_u=x}]+2\right)  \cdot  \sum_{y\in \{0,1\}}  \left(\mathrm E'[ \abs{g'_i(v) \cap \A_i} \mathbf{1}_{\tau_v=y}]+2 \right) \cdot \mathbf{1}_{u,v \in \A_{i-1}}  \\
         &\le \left(\frac{n'p}{2}\right)^2 \cdot  (n'p + 4)^2 \cdot \mathbf{1}_{u,v \in \A_{i-1}}  \le \varphi^2 (\varphi+4)^2\cdot \mathbf{1}_{u,v \in \A_{i-1}} ,
    \end{align}
    Using both upper bounds, we obtain
    \begin{align}
        &\mathrm E[ \abs{\A_{i+1}}^2 \mid \A_{i-1}] = \sum_{u,v \in \A_{i-1}} \mathrm E[ c_2(u) c_2(v) \mid \A_{i-1}] \\
         &= \sum_{u \in \A_{i-1}} \mathrm E[ c_2(u)^2 \mid \A_{i-1}] +  \sum_{\substack{u\neq v \in \A_{i-1}: \\ p(u)=p(v)}} \mathrm E[ c_2(u) c_2(v) \mid \A_{i-1}] + \sum_{\substack{u\neq v \in \A_{i-1}: \\ p(u)\neq p(v)}} \mathrm E[ c_2(u) c_2(v) \mid \A_{i-1}]\\        
         &\le \varphi^2 \cdot n^2 \cdot \abs{\A_{i-1}} +  \varphi^2 (\varphi+4)^2 \cdot n\cdot  \abs{\A_{i-1}}+ \varphi^4 \cdot  \abs{\A_{i-1}}^2 \le 2\varphi^2 \cdot n^2 \cdot \abs{\A_{i-1}} + \varphi^4 \cdot \abs{\A_{i-1}}^2,
    \end{align}
    where in the first inequality, we used the simple upper bound $\mathrm E'[ c_2(u)^2 \mid \A_{i-1}] \le n^2 \cdot \mathrm E'[ c_2(u) \mid \A_{i-1}]$ and that there are at most $n$ vertices in $\A_{i-1}$ sharing the same parent, and in the second inequality we used that $n$ is sufficiently large. 
    Using this recursive formula and the first moment computation, one obtains the upper bound
    \begin{equation}
        \mathrm E[ \abs{\A_{i+1}}^2] \leq  2n^2 \varphi^{i+1} \frac{\varphi^{i+3}-1}{\varphi^2-1}+ \varphi^{2(i+1)} \le \frac{4n^2\varphi^2}{\varphi^2-1} \cdot \varphi^{2(i+1)} .
    \end{equation}
    Finally, we make use of the Paley–Zygmund inequality to get
    \begin{equation}
        \P\left[\abs{\A_{i+1}} \ge \frac{1}{2} \mathrm E[ \abs{\A_{i+1}}] \right] \geq \frac{1-\varphi^{-2}}{16 n^2}  (1-\exp(-n^{1/3}))^2 \ge \frac{\theta}{40 n^2}.
    \end{equation}
    This establishes \eqref{eq:conclusion} and thereby concludes the proof of Lemma \ref{TightBranchingProcess}.

\subsection{Connecting Short Trees: Upper  Bound of Theorem \ref{thm:oriented-d-large}}\label{Sec:Connecting}

We consider  $\vv{\Z}^{2+n} = \vv{\Z}^{2} \times \vv{\Z}^{n}$ and introduce the directed subgraphs $$H_{a,b} = \{v \in \vv{\Z}^{2} \times \vv{\Z}^{n}: v_1 = a, v_2 = b\}.$$ This way, we may think of $\vv{\Z}^{2} \times \vv{\Z}^{n}$ as $\vv{\Z}^{2}$, where each vertex comes with its own copy of $\vv{\Z}^{n}$.  The basic idea is to start with some fixed vertices in $H_{0,0}$ and then grow the open cluster inside of $H_{0,0}$ according to Lemma \ref{TightBranchingProcess} till size $n^{c}$, for some $c > 0$, and then extend it to $H_{1,0}$ and to $H_{0,1}$. The extensions to $H_{1,0}$ and to $H_{0,1}$, respectively, will then be the seeds of new trees inside of $H_{1,0}$ and of $H_{0,1}$. This way the process is continued.

 Fix $\theta \in (0,1/4)$ and let $\P \in \mathcal{P}_a(\vv{\Z}^{2} \times \vv{\Z}^{n}, \sqrt{2} \cdot \frac{(1 + 3\theta)}{n + 2})$.  Our goal is to show that $\P$ percolates for $n$ sufficienctly large. Throughout the proof, we fix the strict level structure $L$ given by the diagonals $L_{i} = \{v \in \vv{\Z}^{2} \times \vv{\Z}^{n}: v_1 + \ldots v_{n+2} = i\}$. According to Lemma \ref{lem:levelwise-decoupling}, we may assume without loss of generality that $\P$ is levelwise independent.
 According to Lemma \ref{TightBranchingProcess}, let us choose $\ell = \ell(n)$  such that for any $n' \ge (1-\theta)n$ and $\mathrm Q \in \mathcal{P}_a(\vv{\Z}^{n'},\sqrt{2} \cdot \frac{(1 + 3\theta)}{n + 2})$, we have $\mathrm Q[\abs{\vv{\mathcal{C}}_0(\omega) \cap L_{\ell - 4}} \geq n^{9}] \geq \frac{\theta}{40 n^2}$ for $n$ sufficiently large.
 
    \textbf{Claim:} For any $(a,b) \in \vv{\Z}^2$, $k \in \Z$, and $U \subseteq H_{a,b} \cap L_k$ of size at least $n^7$, 
    $$\P\left[|\vv{\mathcal{C}}_{U}(\omega) \cap H_{a+1,b} \cap L_{k + \ell}| \geq n^7\right] \ge 1 - \epsilon(n),$$
    where $\epsilon(n) \xrightarrow{n \to \infty}0$, uniformly in the choice of $(a,b)$, $k$, and $U$. The analogous statement holds true  for $H_{a,b+1}$ instead of $H_{a+1,b}$.
    \begin{proof}
    It is without loss of generality to assume that $U$ has size $n^7$. First, we argue that with probability $1-o(1)$, we have that for every $(a,b,u) \in U$, there exists a vertex $(a,b,v) \in N_+^3((a,b,u))$, i.e. reachable by a directed path of length $3$, such that 
        \begin{equation}\label{eq:thm-oriented-higher-1}
            |\vv{\mathcal{C}}_{(a,b,v)}(\omega) \cap H_{a,b} \cap L_{k + \ell - 1}| \geq n^{9}.
        \end{equation}        
    Indeed, fix $(a,b,u) \in U$ and denote by $N \subseteq H_{a,b}$ the set of all vertices  $(a,b,v) \in N_+^3((a,b,u))$ satisfying $u_j = v_j$ for every $j>\frac{n}{\log(n)}$. Note that $\abs{N} = \Omega \big( (\frac{n}{\log(n)})^3 \big)$ and that the sets 
        \begin{equation}
            \left(Z(a,b,v) := \left\{(a,b,w) : w_j = v_j, \forall j \le \frac{n}{\log(n)}; w_j \ge v_j, \forall j > \frac{n}{\log(n)}\right\}\right)_{(a,b,v) \in N}
        \end{equation}
    are pairwise disjoint. By Lemma \ref{TightBranchingProcess} applied to the subgraph $Z(a,b,v)$, whose vertices have outdegree $(1-1/\log(n))n$, we have that the event in \eqref{eq:thm-oriented-higher-1} restricted to $Z(a,b,v)$ occurs with probability at least $\theta/(40 n^2)$, independently for each $(a,b,v) \in N$. Thus, the event occurs for some $(a,b,v) \in N$ with probability at least $1-\exp(-\gamma \sqrt{n})$ for some $\gamma >0$, and taking a union bound over $U$ completes the first step. 

    Second, let us denote by $A$ the event that for every $(a,b,u) \in U$, there exists a vertex $(a,b,v) \in N_+^3((a,b,u))$ such that 
        \begin{equation}\label{eq:thm-oriented-higher-2}
            |\vv{\mathcal{C}}_{(a,b,v)}(\omega) \cap H_{a+1,b} \cap L_{k + \ell}| \geq n^{7}.
        \end{equation}  
    It easily follows from \eqref{eq:thm-oriented-higher-1} that $\P[A] = 1-o(1)$.
    Indeed, it suffices that for at least $n^7$ of the $n^{9}$ vertices in $\vv{\mathcal{C}}_{(a,b,v)}(\omega) \cap H_{a,b} \cap L_{k + \ell - 1}$, the edge $((a,b,w),(a+1,b,w))$ is open. This happens with probability $1-o(1)$ since the marginals are of order $1/n$.

    Finally, it remains to argue that with probability $1-o(1)$, there exists some $(a,b,u) \in U$ which is connected by an open path to a vertex $(a,b,v) \in N_+^3((a,b,u))$ satisfying \eqref{eq:thm-oriented-higher-2}. To this end, we sample the edges between levels $L_{k+3}$ and $L_{k+\ell}$ conditional on the event $A$, and we pick for each $(a,b,u) \in U$ a path of length $3$ to some $(a,b,v) \in N_+^3((a,b,u))$ satisfying \eqref{eq:thm-oriented-higher-2}. Since these paths are of length $3$, any path can share a vertex with at most $4n^3$ other paths, and so the initial collection of $n^7$ paths contains a subcollection of $n^4 / 4$ paths which are pairwise disjoint. 
    Since each path is open with probability at least $1/n^3$, we conclude that at least one of the paths is open with probability $1-o(1)$.
    \end{proof}

Fix an arbitrary set $O \subseteq H_{0,0} \cap L_0$ consisting of $n^7$ vertices. For any percolation configuration $\omega \in \{0,1\}^{\vv{\Z}^2 \times \vv{\Z}^n}$ and $i\ge 0$, we define 
    \begin{equation}
        \mathcal{Y}_i(\omega) := \left\{(a,b) \in \vv{\Z}^2 : a+b = i\ \text{and}\ |\vv{\mathcal{C}}_O(\omega) \cap H_{a,b} \cap L_{i \cdot h}| \geq n^7\right\}.
    \end{equation}
Note that the choice of $O$ guarantees $\mathcal{Y}_0(\omega) = \{(0,0)\}$ and that $\mathcal{Y}_i$ is increasing in $\omega$.
According to the previously established claim, each of the events $\{(1,0) \in \mathcal{Y}_1(\omega)\}$ and $\{(0,1) \in \mathcal{Y}_1(\omega)\}$ has probability $1-\epsilon(n)$ under $\P$, and the two events are positively correlated. 
This being said, it will come as no surprise that 
    \begin{equation}\label{eq:thm-oriented-higher-3}
        \P[\mathcal{Y}_i(\omega) \neq \emptyset,\ \forall i\ge 0] > 0.
    \end{equation}
To establish \eqref{eq:thm-oriented-higher-3} rigorously, one compares the stochastic process $(\mathcal{Y}_i)_{i\ge 0}$ to the Markov chain $(\mathcal{X}_i)_{i\ge 0}$ introduced in Subsection \ref{sec:StochasticDominationOrientedClusters}  with initial state $\{(0,0)\}$ and parameter $p= 1 - \epsilon(n)$. 
As in the proofs of Lemma \ref{lem:levelwise-decoupling}, \ref{lem:markov-chain-lemma-1} and \ref{lem:markov-chain-lemma-2}, the result of Kamae, Krengel and O'Brien \cite{Kamae1977} allows to show that $\mathcal{X}_i \preceq \mathcal{Y}_i$ for every $i\ge 0$, and we leave the details of this step to the reader. If $(1-\epsilon(n))^2 > p_c^{\mathrm{site}}(\vv{\Z}^2)$, Lemma \ref{lem:markov-chain-lemma-2} then implies survival of the Markov chain $(\mathcal{X}_i)_{i\ge 0}$, meaning that the event $\{\mathcal{X}_i\neq \emptyset, \forall i\ge 0\}$ has positive probability, and this establishes \eqref{eq:thm-oriented-higher-3} for $n$ sufficiently large. It now follows by construction that $\P$ percolates and this concludes the proof of Theorem \ref{thm:oriented-d-large}.

\section{Directions for Future Research}\label{sec:open-questions}

Theorem \ref{thm:trees} implies the equality $p_{a}^{+}(T)=p_{c}(T)$ for trees, whereas Theorem \ref{thm:unoriented-d-2} establishes the strict inequality $p_{a}^{+}(\Z^2) > p_{c}(\Z^2)$ on the square lattice. It is natural to ask if the strict inequality
\begin{equation}
    p_{a}^{+}(G) > p_{c}(G)
\end{equation}
holds for every transitive graph $G$ that is not a tree.

In Theorem \ref{thm:trees}, positive association and finite-range dependence are necessary assumptions. However, it would be interesting to try to extend Theorem \ref{thm:trees} to classes of positively associated percolation models satisfying a sufficiently fast decay of correlations.

In the high-dimensional case, we were able to identify the first-order term of $p_a^+(\vv{\Z}^n)$ as $n \to \infty$. However, we were only able to show that 
\begin{equation}
   \frac{1}{2n} \cdot (1+o(1)) \le  p_a^+(\Z^n) \le \frac{\sqrt{2}}{n} \cdot (1+o(1)) \quad \text{as}\ n \to \infty.
\end{equation}
This leaves open the question of exactly identifying the first-order term of $p_a^+(\Z^n)$.

In the two-dimensional case, the lower and upper bounds in Theorems \ref{thm:unoriented-d-2} and \ref{thm:oriented-d-2} remain far from each other. Conjectures for the correct values or significant improvements of the rigorous bounds would be very interesting and could lead to new methods. 

A natural subclass of $\mathcal{P}_a(G,p)$ is obtained by restricting to invariant percolation models. More precisely, for a transitive graph $G$, consider the class $\mathcal{P}_{a,s}(G,p)$ of positively associated, finite-range dependent percolation models with marginals $p$ that are invariant under the symmetries of $G$ (or under some subgroup of the symmetry group). Introducing 
\begin{equation}
	p_{a,s}^{+}(G) := \sup\{p : \exists\; \P \in \mathcal{P}_{a,s}(G,p) \text{ that does not percolate}\},
\end{equation}
it is clear that $p_{a,s}^{+}(G) \le p_a^{+}(G)$. One direction would then be to make use of the symmetries to establish better bounds on $p_{a,s}^{+}(G)$. Another interesting question would be to study whether $p_{a,s}^{+}(G) > p_c(G)$ or not. In the case of $\Z^n$, we conjecture that 
\begin{equation}
   p_{a,s}^+(\Z^n) = \frac{1}{2n} \cdot (1+o(1)) \quad \text{as}\ n \to \infty.
\end{equation}

\small
\bibliographystyle{alpha}
\bibliography{refs}

\newpage

\appendix 
\section{Appendix} \label{appendix}

It remains to describe the algorithm used to compute the survival probabilities in Figure \ref{fig:Survival_Probabilities}. The algorithm is similar to the one used in \cite{Balister1993}. We consider the directed graph $\vv{\mathcal{B}}_1(u;w,\ell)$ with the strict level structure introduced below \eqref{eq:definition-diagonal-box} and compute by induction the probability distribution of $\mathcal{X}_i \subseteq L_i$ for every $1\le i \le 2(\ell+w)$. Initializing with $\mathcal{X}_0 = L_0$ then yields the desired survival probabilities for $\ell = 0$ respectively $\ell = w+1$. Below, we present the induction step. As even levels have size $w+1$ and odd levels have size $w$, there is a slight difference between $i$ even and $i$ odd. We first consider the case, where $i$ is even and then discuss what changes are needed if $i$ is odd.
\begin{algorithm}[H]
\caption{Algorithm computing the distribution of $\mathcal{X}_i^{t+1}$ from the distribution of $\mathcal{X}_i^{t}$ in the case that $i$ is even}\label{alg:Computation}
\begin{algorithmic}
\State Initialize $P[\mathcal{X}_i^{t+1} = c] = 0,$ for all $c = (c_j)_{j=0}^{w+1} \in \{0,1\}^{w + 1}\times \{\ell, r\}$.
\For{$c \in \{0,1\}^{w + 1}\times \{\ell, r\}$}
\If{$c_{w+1} = \ell$ or $c_{t} = 0$}
\If{$c_{t+1} = 1$}
\State Set $d=c$ and replace in $d$, $d_{w+1}$ by $r$ and $d_{t}$ by $1$
\State Update: $P[\mathcal{X}_i^{t+1} = d] = P[\mathcal{X}_i^{t+1} = d] + p \cdot P[\mathcal{X}_i^{t} = c]$
\State 
\State Set $d=c$ and replace in $d, d_{w+1}$ by $\ell$ and $d_{t}$ by $0$
\State Update: $P[\mathcal{X}_i^{t+1} = d] = P[\mathcal{X}_i^{t+1} = d] + (1-p) \cdot P[\mathcal{X}_i^{t} = c]$
\ElsIf{$c_{t+1} = 0$}
\State Set $d=c$ and replace in $d, d_{w+1}$ by $\ell$ and $d_{t}$ by $0$
\State Update: $P[\mathcal{X}_i^{t+1} = d] = P[\mathcal{X}_i^{t+1} = d] + P[\mathcal{X}_i^{t} = c]$
\EndIf
\ElsIf{$c_{w+1} = r$ and $c_{t} = 1$}
\State Set $d=c$ and replace in $d, d_{w+1}$ by $r$ and $d_{t}$ by $1$
\State Update: $P[\mathcal{X}_i^{t+1} = d] = P[\mathcal{X}_i^{t+1} = d] + p \cdot P[\mathcal{X}_i^{t} = c]$
\State
\State Set $d=c$ and replace in $d, d_{w+1}$ by $r$ and $d_{t}$ by $0$
\State Update: $P[\mathcal{X}_i^{t+1} = d] = P[\mathcal{X}_i^{t+1} = d] + (1-p) \cdot P[\mathcal{X}_i^{t} = c]$
\EndIf
\EndFor
\end{algorithmic}
\end{algorithm}
To keep notation simple, we rotate the diagonal box $\vv{\mathcal{B}}_1(u;w,\ell)$ counter-clockwise by $45^{\circ}$ and enumerate the vertices of the box by row and column as $(i,j),$ where in the even rows we have $w + 1$ vertices and in the odd rows we have $w$ vertices.\footnote{In the algorithm, we start the enumeration of the columns with $j=0$, and so the first $w$ positions of $\mathcal{X}_i^w$ are referred to as $(i,0),\ldots,(i,w-1)$.} Let us denote by $\mathcal{X}_i^{t} \in \{0,1\}^{w + 1},$ where $0 \leq t \leq w$, the process where the $s$-th, $0 \leq s < t$, position is $1$ if the vertex $(i+1, s)$ is in $\mathcal{X}_{i+1}$ and the $s$-th, $s \geq t$, position is $1$ if the vertex $(i, s)$ is in $\mathcal{X}_{i}$. From the definition, it is clear that $\mathcal{X}_i^{0}$ equals $\mathcal{X}_i$ and the first $w$ positions of $\mathcal{X}_i^{w}$ are equal to $\mathcal{X}_{i+1}$. It is clear that if we can compute the distribution of $\mathcal{X}_i^{t+1}$ given the distribution of $\mathcal{X}_i^{t}$, we have an algorithm for computing the distribution of $\mathcal{X}_{i+1}$ from the distribution of $\mathcal{X}_{i}.$ But there is one information lacking. Indeed, in order to generate $\mathcal{X}_i^{t+1}$ from $\mathcal{X}_i^{t}$, we need to know whether for the current interval $W$, we have already added a vertex to $N^+(W)$ or not (see the definition of $(\mathcal X_i)_{i\ge 0}$ in Subsection \ref{sec:StochasticDominationOrientedClusters}). More formally, if there is some vertex $(i+1, s_1) \in \mathcal{X}_{i+1}$ with $s_1 < t$ such that for all $s_2 \in (s_1,t)$ , we have $(i, s_2) \in \mathcal{X}_i$, then we call  $\mathcal{X}_i^t$ in state $r$. Otherwise, we call it in state $\ell$. With this extra information, it is now easy to compute the distribution of $\mathcal{X}_i^{t+1}$ from the distribution of $\mathcal{X}_i^{t}.$ By abuse of notation, we thus consider $\mathcal{X}_i^t \in \{0,1\}^{w+1} \times \{\ell, r\}$, and $\mathcal{X}^0_i$ is actually equal to $(\mathcal{X}_i , r)$. With these preparations it is now fairly easy to compute the distribution of $\mathcal{X}_i^{t+1}$ given the distribution of $\mathcal{X}_i^{t}$ as presented in Algorithm \ref{alg:Computation}.

In the case that $i$ is odd, we simply define $\mathcal{X}_i^0$ to be $(0,\mathcal{X}_i, r)$, i.e.\ we set the first position to be 0 and the second to ($w+1$)'th positions  according to $\mathcal{X}_i$, and compute $\mathcal{X}^t_i$ for all $1 \leq t \leq w+1,$ where the $w+1$ first positions of $\mathcal{X}^{w+1}_i$ are then equal to $\mathcal{X}_{i+1}$. We have to be slightly careful when computing $\mathcal{X}_i^{w+1}$ from $\mathcal{X}_i^{w}$ since the vertex $(i+1,w)$ can only be reached through the right outgoing edge of $(i,w-1)$. Hence, for $t=w$, we have to change the first if-clause to read:
\begin{algorithm}[H]
    \begin{algorithmic}
    \If{$c_{w+1} = \ell$ or $c_{t} = 0$}
        \State Replace in $d, d_{w+1}$ by $\ell$ and $d_{t}$ by $0$
        \State Update: $P[\mathcal{X}_i^{t+1} = d] = P[\mathcal{X}_i^{t+1} = d] + P[\mathcal{X}_i^{t} = c]$
    \EndIf
\end{algorithmic}
\end{algorithm}
It is easy to see that the computation of the survival probabilities has time complexity $ O(w \ell \cdot 2^{w}).$

\end{document}